\numberwithin{equation}{section}
\newtheorem{theorem}[equation]{Theorem}
\newtheorem*{theorem*}{Theorem} \newtheorem{lemma}[equation]{Lemma}
\newtheorem*{conjecture*}{Mamma Conjecture}
\newtheorem*{conjecture1*}{Mamma Conjecture (revisited)}
\newtheorem{proposition}[equation]{Proposition}
\newtheorem{corollary}[equation]{Corollary}
\newtheorem*{corollary*}{Corollary}
\theoremstyle{remark}
\newtheorem{definition}[equation]{Definition}
\newtheorem{example}[equation]{Example}
\newtheorem{notation}[equation]{Notation}
\theoremstyle{remark}
\newtheorem{remark}[equation]{Remark}
\newcommand{\cA}{{\mathcal A}}
\newcommand{\cB}{{\mathcal B}}
\newcommand{\cC}{{\mathcal C}}
\newcommand{\cD}{{\mathcal D}}
\newcommand{\cE}{{\mathcal E}}
\newcommand{\cF}{{\mathcal F}}
\newcommand{\cG}{{\mathcal G}}
\newcommand{\cH}{{\mathcal H}}
\newcommand{\cL}{{\mathcal L}}
\newcommand{\cO}{{\mathcal O}}
\newcommand{\cT}{{\mathcal T}}
\newcommand{\cX}{{\mathcal X}}
\newcommand{\cY}{{\mathcal Y}}
\newcommand{\cZ}{{\mathcal Z}}
\newcommand{\Spt}{\mathrm{Spt}}
\newcommand{\bbA}{\mathbb{A}}
\newcommand{\bbC}{\mathbb{C}}
\newcommand{\bbF}{\mathbb{F}}
\newcommand{\bbQ}{\mathbb{Q}}
\newcommand{\bbZ}{\mathbb{Z}}
\DeclareMathOperator{\id}{id}
\DeclareMathOperator{\NMot}{NMot}
\DeclareMathOperator{\NChow}{NChow} 
\DeclareMathOperator{\Fun}{Fun} 
\newcommand{\dgcat}{\mathrm{dgcat}} 
\newcommand{\bbK}{I\mspace{-6.mu}K}
\newcommand{\perf}{\mathrm{perf}}
\newcommand{\dg}{\mathrm{dg}}
\newcommand{\Hom}{\mathrm{Hom}}
\newcommand{\End}{\mathrm{End}}
\newcommand{\rep}{\mathrm{rep}}
\newcommand{\dgHo}{\mathrm{H}^0}
\newcommand{\Ho}{\mathrm{Ho}}
\newcommand{\Hmo}{\mathrm{Hmo}}
\newcommand{\HHmo}{\mathbb{H}\mathrm{mo}}
\newcommand{\op}{\mathrm{op}}
\newcommand{\Map}{\mathrm{Map}}
\newcommand{\too}{\longrightarrow}
\newcommand{\ie}{\textsl{i.e.}\ }
\newcommand{\eg}{\textsl{e.g.}}
\newcommand{\U}{\mathrm{U}}
\newcommand{\MMot}{\mathbb{M}\mathrm{ot}}
\newcommand{\UU}{\mathbb{U}}
\newcommand{\vphi}{{\varphi\!}{/}{\!\sim}}
\def\r{\rightarrow}
\let\oldmarginpar\marginpar
\def\marginpar#1{\oldmarginpar{\tiny #1}}
\def\pr{\operatorname{pr}}
\def\Spec{\operatorname{Spec}}
\def\DM{\operatorname{DM}}
\def\coh{\operatorname{coh}}
\let\Oscr\cO
\def\rad{\operatorname{rad}}
\begin{document}

\title[Additive invariants of orbifolds]{Additive invariants of orbifolds}
\author{Gon{\c c}alo~Tabuada and Michel Van den Bergh}

\address{Gon{\c c}alo Tabuada, Department of Mathematics, MIT, Cambridge, MA 02139, USA}
\email{tabuada@math.mit.edu}
\urladdr{http://math.mit.edu/~tabuada}
\thanks{G.~Tabuada was partially supported by a NSF CAREER Award.}
\thanks{M.~Van den Bergh is a senior researcher at the Research Foundation -- Flanders}

\address{Michel Van den Bergh, Department of Mathematics, Universiteit Hasselt, 3590 Diepenbeek, Belgium}
\email{michel.vandenbergh@uhasselt.be}
\urladdr{http://hardy.uhasselt.be/personal/vdbergh/Members/~michelid.html}

\subjclass[2010]{14A22, 16H05, 19D55, 19E08, 55N32}
\date{\today}

\keywords{Orbifold, algebraic $K$-theory, cyclic homology, topological Hochschild homology, Azumaya algebra, standard conjectures, noncommutative algebraic geometry}

\abstract{In this article, using the recent theory of
  noncommutative motives, we compute the additive invariants of orbifolds (equipped with a sheaf of Azumaya algebras) using solely ``fixed-point data''. As a consequence, we recover, in a unified
    and conceptual way, the original results of Vistoli concerning algebraic $K$-theory, of Baranovsky concerning cyclic homology, of the second author and Polishchuk concerning Hochschild homology, and of Baranovsky-Petrov and C\v{a}ld\v{a}raru-Arinkin (unpublished) concerning twisted Hochschild homology; in the case of topological Hochschild homology and periodic topological cyclic homology, the aforementioned computation is new in the literature. As an application, we verify Grothendieck's
    standard conjectures of type $C^+$ and $D$, as well as Voevodsky's
    smash-nilpotence conjecture, in the case of ``low-dimensional'' orbifolds. Finally, we establish a result of independent interest concerning nilpotency in the Grothendieck ring of~an~orbifold. 
}}

\maketitle
\vskip-\baselineskip
\vskip-\baselineskip
\tableofcontents

\vskip-\baselineskip
\vskip-\baselineskip
\vskip-\baselineskip


\section{Introduction}\label{sec:intro}
A {\em differential graded (=dg) category $\cA$}, over a base field
$k$, is a category enriched over complexes of $k$-vector spaces; see
\S\ref{sub:dg}. Let us denote by $\dgcat(k)$ the category of
(essentially small) $k$-linear dg categories. Every (dg) $k$-algebra
$A$ gives naturally rise to a dg category with a single
object. Another source of examples is provided by schemes since the
category of perfect complexes $\perf(X)$ of every
quasi-compact 
quasi-separated $k$-scheme $X$ (or, more generally, suitable algebraic
stack $\cX$) admits a canonical dg enhancement $\perf_\dg(X)$; consult
\cite[\S4.6]{ICM-Keller}\cite{LO}. Moreover, the tensor product
$-\otimes_X-$ makes $\perf_\dg(X)$ into a commutative monoid in the category obtained from
$\dgcat(k)$ by inverting the Morita equivalences; see \cite{Schnurer}.

An {\em additive invariant} of dg categories is a functor $E\colon \dgcat(k) \to \mathrm{D}$, with values in an idempotent complete additive category, which inverts Morita equivalences and sends semi-orthogonal decompositions to direct sums; consult \S\ref{sec:additive} for further details. As explained in {\em loc. cit.}, examples include several variants of algebraic $K$-theory, of cyclic homology, and of  topological Hochschild homology. Given a $k$-scheme $X$ (or stack) as above, let us write $E(X)$ instead of $E(\perf_\dg(X))$. Note that if $E$ is lax (symmetric) monoidal, then $E(X)$ is a (commutative) monoid in $\mathrm{D}$. 

Now, let $G$ be a finite group acting on a smooth separated $k$-scheme $X$. In what follows, we will write $[X/G]$, resp. $X/\!\!/G$, for the associated (global) orbifold, resp. geometric quotient. The results of this article may be divided into three parts:
\begin{itemize}
\item[(i)] {\em Decomposition of orbifolds:} We establish some formulas for $E([X/G])$ in terms of fixed-point data $\{E(X^g)\}_{g \in G}$; consult Theorem \ref{thm:main} and Corollaries \ref{cor:main1} and \ref{cor:main2}. In the particular case where $E$ is algebraic $K$-theory, resp. cyclic homology, these formulas reduce to previous results of Vistoli, resp. Baranovsky.
\item[(ii)] {\em Smooth quotients:} We prove that if $X/\!\!/G$ is smooth, then $E(X/\!\!/G)\simeq E(X)^G$; consult Theorem \ref{thm:main2}. In the particular case where $E$ is Hochschild homology, this reduces to a previous result of the second author with Polishchuk.
\item[(iii)] {\em Equivariant Azumaya algebras:} We extend the formulas of the above part (i) to the case where $X$ is equipped with a $G$-equivariant sheaf of Azumaya algebras; consult Theorem \ref{thm:formula-main4} and Corollaries \ref{cor:az1}, \ref{cor:az3}, and \ref{cor:az4}. In the particular case where $E$ is Hochschild homology, these formulas reduce to an earlier result of Baranovsky-Petrov and C\v{a}ld\v{a}raru-Arinkin (unpublished).   
\end{itemize}
As an application of the formulas established in part (i), we verify Grothendieck's
    standard conjectures of type $C^+$ and $D$, as well as Voevodsky's
    smash-nilpotence conjecture, in the case of ``low-dimensional'' orbifolds; consult Theorem \ref{thm:conjectures}. 
\subsection*{Statement of results}
Let $k$ be a base field of characteristic $p\geq0$, $G$ a finite group
of order $n$, $\varphi$ the set of all cyclic subgroups of $G$, and 
$\vphi$ a (chosen) set of representatives of the conjugacy classes in $\varphi$. Given a cyclic subgroup $\sigma \in \varphi$, we will write $N(\sigma)$ for the normalizer of $\sigma$. Throughout the article, we will assume that $1/n\in k$; we will {\em not} assume that $k$ contains the $n^{\mathrm{th}}$ roots of unity. 

Let $X$ be a smooth separated $k$-scheme equipped with a $G$-action; we will {\em not} assume that $X$ is quasi-projective. As above, we will write $[X/G]$, resp. $X/\!\!/G$, for the associated (global) orbifold, resp. geometric quotient.
%
%
%
%
%
    
\subsubsection*{Decomposition of orbifolds}
Let $R(G)$ be the representation ring of $G$. As explained in \S\ref{sec:action}, the assignment $[V] \mapsto V\otimes_k -$, where $V$ stands for a $G$-representation, gives rise to an action of $R(G)$ on $E([X/G])$ for every additive~invariant~$E$. 

Given a cyclic subgroup $\sigma \in \varphi$, recall from Definition \ref{def:primitive} below that the $\bbZ[1/n]$-linearized representation ring $R(\sigma)_{1/n}$ comes equipped with a certain canonical idempotent $e_\sigma$. As explained in {\em loc. cit.}, $e_\sigma$ can be characterized as the maximal idempotent whose image under all the restrictions $R(\sigma)_{1/n} \to R(\sigma')_{1/n}, \sigma' \subsetneq \sigma$, is zero. Whenever an object $\cO$ (of an idempotent complete category) is equipped with an $R(\sigma)_{1/n}$-action, we will write $\widetilde{\cO}$ for the direct summand $e_\sigma \cO$. In particular, $\widetilde{R}(\sigma)_{1/n}$ stands for the direct summand $e_\sigma R(\sigma)_{1/n}$.

Let $E\colon \dgcat(k) \to \mathrm{D}$ be an additive invariant with values in a $\bbZ[1/n]$-linear category. On the one hand, as mentioned above, we have a canonical $R(\sigma)_{1/n}$-action on $E([X^\sigma/\sigma])$. On the other hand, $N(\sigma)$ acts naturally on $[X^\sigma/\sigma]$ and hence on $E([X^\sigma/\sigma])$. By functoriality, the former action is compatible with the latter. Consequently, we obtain an induced $N(\sigma)$-action on $\widetilde{E}([X^\sigma/\sigma])$. Under the above notations, our first main result is the following:
\begin{theorem}\label{thm:main} 
For every additive invariant $E\colon \dgcat(k) \to \mathrm{D}$, with values in a 
$\bbZ[1/n]$-linear category, we have an isomorphism
\begin{equation}\label{eq:formula-main}
\begin{aligned}
E([X/G]) &\simeq \bigoplus_{\sigma \in \vphi} \widetilde{E}([X^\sigma/\sigma])^{N(\sigma)}
\end{aligned}
\end{equation}
induced by pull-back with respect to the morphisms $[X^\sigma/\sigma]\r [X/G]$.
Moreover, if $E$ is lax monoidal, then \eqref{eq:formula-main} is an isomorphism of monoids.  
\end{theorem}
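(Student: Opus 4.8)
The plan is to reduce the orbifold $[X/G]$ to its fixed-point data in two stages: first decompose $E([X/G])$ along conjugacy classes of cyclic subgroups using an equivariant version of the ``category of $G$-sets'' / derived-induction formalism, then analyze each piece. More concretely, I would first recall (from the section on $G$-actions referenced by the paper) that $\perf_\dg([X/G])$ carries an action of the representation ring $R(G)$, and that there is a semi-orthogonal-type decomposition of $\perf_\dg([X/G])$, after $\bbZ[1/n]$-linearization, indexed by $\vphi$. The key input is a noncommutative analogue of the classical result that $K_0$ of the representation ring (or the relevant equivariant category) splits via the idempotents $e_\sigma$ after inverting $n$; combined with the fact that $E$ sends semi-orthogonal decompositions to direct sums, this already yields a decomposition $E([X/G]) \simeq \bigoplus_{\sigma \in \vphi} (\text{something built from } X^\sigma)$.

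The heart of the matter is identifying each summand. For a fixed cyclic $\sigma$, I would use the morphism $[X^\sigma/\sigma] \to [X/G]$ and the induction/restriction adjunction at the level of dg categories to show that the $\sigma$-indexed contribution is the image of $\widetilde{E}([X^\sigma/\sigma])$ under an idempotent coming from the $N(\sigma)$-action — precisely the invariants $\widetilde{E}([X^\sigma/\sigma])^{N(\sigma)}$. This requires: (a) that $e_\sigma$ cuts out exactly the ``genuinely $\sigma$-isotypic'' part, so that distinct $\sigma$'s in $\vphi$ contribute independently (this is the defining maximality property of $e_\sigma$ recalled before the theorem); (b) that passing from all of $\varphi$ to representatives $\vphi$ is accounted for by taking $N(\sigma)$-invariants, since $N(\sigma)$ permutes the conjugates; and (c) that $\widetilde{E}(-)^{N(\sigma)}$ makes sense and is a direct summand, which holds because $D$ is $\bbZ[1/n]$-linear and idempotent complete, so the averaging idempotent $\frac{1}{|N(\sigma)/\sigma|}\sum$ exists. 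I would assemble these using only the formal properties of additive invariants plus the structural results on $\perf_\dg$ of quotient stacks proved earlier in the paper.

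The monoidal refinement is then essentially formal. If $E$ is lax monoidal, then $E([X/G])$ is a monoid in $D$, and the pull-back maps $E([X/G]) \to E([X^\sigma/\sigma])$ are induced by the monoidal functors $\perf_\dg([X/G]) \to \perf_\dg([X^\sigma/\sigma])$ (restriction along $[X^\sigma/\sigma] \to [X/G]$ is symmetric monoidal since it is pullback of quasi-coherent sheaves), so each landing in $\widetilde{E}([X^\sigma/\sigma])^{N(\sigma)}$ is a monoid; one then checks that the resulting map $E([X/G]) \to \bigoplus_\sigma \widetilde{E}([X^\sigma/\sigma])^{N(\sigma)}$, already known to be an isomorphism of objects, respects the (componentwise) multiplication and unit. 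This last verification is a diagram chase using lax monoidality of $E$ and the fact that the idempotents $e_\sigma$ and the averaging idempotents are built from elements of the relevant commutative rings, hence interact well with the product.

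The main obstacle I anticipate is step (a)–(b) above: making rigorous the claim that the decomposition of $\perf_\dg([X/G])$, which a priori lives over the full poset $\varphi$ of cyclic subgroups with their inclusions, collapses after applying an additive invariant to a clean direct sum over $\vphi$ with $N(\sigma)$-invariants, with no cross-terms and no overcounting. This is where the combinatorics of the idempotents $e_\sigma$ — their orthogonality, their behavior under restriction, and the action of $N(\sigma)$ — must be controlled carefully; everything else (the adjunction yielding each summand as $\widetilde E([X^\sigma/\sigma])^{N(\sigma)}$, and the monoidal statement) should follow formally once that bookkeeping is in place.
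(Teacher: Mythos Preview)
Your proposal has a genuine gap at the most critical step. You assert that there is a ``semi-orthogonal-type decomposition of $\perf_\dg([X/G])$, after $\bbZ[1/n]$-linearization, indexed by $\vphi$,'' and that identifying the summands then reduces to formal bookkeeping with the idempotents $e_\sigma$ and the $N(\sigma)$-averaging. But no such decomposition of the dg category exists: the idempotents $\tilde{e}_\sigma$ live in $R(G)_{1/n}$ acting on the \emph{motive} $\U([X/G])_{1/n}$, not on $\perf_\dg([X/G])$ itself, and the paper explicitly notes (in the McKay remark) that the resulting splitting is in general \emph{not} induced by any semi-orthogonal decomposition. So there is nothing for condition (ii) of ``additive invariant'' to bite on; the decomposition must be produced directly in $\NMot(k)_{1/n}$ (or equivalently in $\mathrm{D}$).

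More seriously, even granting that the idempotents $\tilde{e}_\sigma$ split $\U([X/G])_{1/n}$ into pieces indexed by $\vphi$, your proposal offers no mechanism for identifying the $\sigma$-piece with $\widetilde{\U}([X^\sigma/\sigma])_{1/n}^{N(\sigma)}$. The paper does this by constructing an explicit map $\bar{\theta}$ (pull-back followed by $e_\sigma$) and a candidate right inverse $\bar{\psi}'$ (push-forward), computing the cross-terms $\pi_{\sigma'}\gamma_{\sigma'}^\ast\gamma_{\sigma,\ast}\iota_\sigma$ via Mackey-type double-coset arguments (Lemmas~\ref{lem:vanishing}--\ref{lem:aux2}), and then --- the step entirely missing from your outline --- showing that the remaining idempotent $e=\id-\bar{\psi}\bar{\theta}\in K_0([X/G])_{1/n}$ vanishes. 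This last step is \emph{not} formal: it is proved by observing that $\End(\UU([X/G])_{1/n})\simeq K_0([X/G])_{1/n}$ acts faithfully on $K_0([X/G])_{1/n}$, so it suffices to check that $K_0(\bar{\theta})_{1/n}$ is an isomorphism, which is exactly Vistoli's theorem for equivariant $K_0$. In other words, the entire argument is a reduction to the already-known $K_0$ case, carried out inside the auxiliary category $\MMot([X/G])$ of $G_0$-motives built precisely so that endomorphisms of $\UU([X/G])$ are detected by $K_0$. Your induction/restriction sketch, however carefully the combinatorics of $e_\sigma$ are tracked, cannot replace this input: at some point one must know that $\bar{\theta}$ is injective (not merely split-surjective), and that is a genuine theorem about $K_0$ of orbifolds, not a formal consequence of the idempotent calculus.
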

\begin{remark} The right-hand side of \eqref{eq:formula-main} may be re-written as $(\bigoplus_{\sigma \in \varphi} \widetilde{E}([X^\sigma/\sigma]))^G$. In this way, the isomorphism \eqref{eq:formula-main} does not depend on any choices.
\end{remark}
\begin{remark} Note that $\sigma$ acts trivially on $X^\sigma$. Therefore, by combining Proposition \ref{prop:generator} below with \cite[Lem.~4.26]{Gysin}, we conclude that the dg category $\perf_{\dg} ([X^\sigma/\sigma])$ is Morita equivalent to $\perf_{\dg}(X^\sigma\times\Spec(k[\sigma]))$.
%
This implies that the above isomorphism \eqref{eq:formula-main} can be re-written as follows:
\begin{equation}
\label{eq:formula-main-sub}
\begin{aligned}
E([X/G]) 
&\simeq \bigoplus_{\sigma \in \vphi} \widetilde{E}(X^{\sigma}\times \Spec(k[\sigma]))^{N(\sigma)} 
\,.
\end{aligned}
\end{equation}
Intuitively speaking, \eqref{eq:formula-main-sub} shows that every additive 
invariant of (global) orbifolds can be computed using solely ``ordinary''
 schemes. Unfortunately, when $E$ is lax monoidal the above isomorphism \eqref{eq:formula-main-sub} obscures the monoid structure on $E([X/G])$.
 \end{remark} 
 
 Given a commutative ring $R$ and an $R$-linear idempotent complete additive category $\mathrm{D}$, let us write $-\otimes_R-$ for the canonical action of the category of finitely generated projective $R$-modules on $\mathrm{D}$. Under some mild assumptions, the above Theorem \ref{thm:main} admits the following refinements:
%
%

\begin{corollary}\label{cor:main1}
\begin{itemize}
\item[(i)] If $k$ contains the $n^{\mathrm{th}}$ roots of
  unity, then
  \eqref{eq:formula-main} reduces to
\begin{equation}\label{eq:formula-main2}
 E([X/G]) \simeq \bigoplus_{\sigma \in \vphi} (E(X^\sigma) \otimes_{\bbZ[1/n]}
\widetilde{R}(\sigma)_{1/n})^{N(\sigma)}\,.
\end{equation}
\item[(ii)] If $k$ contains the $n^{\mathrm{th}}$ roots of
  unity and $\mathrm{D}$ is $l$-linear for a field $l$ which contains the $n^{\mathrm{th}}$ roots of unity and $1/n\in l$, then \eqref{eq:formula-main2} reduces to an isomorphism
\begin{equation}\label{eq:formula-main3}
 E([X/G]) \simeq \bigoplus_{g\in G\!/\!\sim} E(X^g)^{C(g)} \simeq (\bigoplus_{g\in G} E(X^g))^G\,,
\end{equation}
where $C(g)$ stands for the centralizer of $g$.
\end{itemize}
Moreover, if $E$ is lax monoidal, then \eqref{eq:formula-main2}-\eqref{eq:formula-main3} are isomorphisms of monoids.
\end{corollary}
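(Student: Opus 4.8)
The plan is to derive Corollary \ref{cor:main1} from Theorem \ref{thm:main} by analyzing what happens to each summand $\widetilde{E}([X^\sigma/\sigma])^{N(\sigma)}$ under the extra hypotheses. For part (i), since $\sigma$ acts trivially on $X^\sigma$, the second Remark following Theorem \ref{thm:main} already identifies $\perf_\dg([X^\sigma/\sigma])$ with $\perf_\dg(X^\sigma \times \Spec(k[\sigma]))$; when $k$ contains the $n^{\mathrm{th}}$ roots of unity, the group algebra $k[\sigma]$ splits as a product of copies of $k$ indexed by the characters of $\sigma$, so $\Spec(k[\sigma])$ is a disjoint union of $|\sigma|$ copies of $\Spec(k)$. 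First I would record that, under this splitting, $E(X^\sigma \times \Spec(k[\sigma])) \simeq E(X^\sigma) \otimes_{\bbZ[1/n]} R(\sigma)_{1/n}$ as modules over $R(\sigma)_{1/n}$ (the $R(\sigma)_{1/n}$-action on the right being via the regular representation, i.e. via multiplication on the second tensor factor), using that $E$ is an additive invariant and hence sends finite disjoint unions, equivalently semi-orthogonal decompositions by structure sheaves of components, to direct sums, together with the compatibility of $E$ with the $\bbZ[1/n]$-linear structure. Applying the idempotent $e_\sigma$ then yields $\widetilde{E}([X^\sigma/\sigma]) \simeq E(X^\sigma) \otimes_{\bbZ[1/n]} \widetilde{R}(\sigma)_{1/n}$, and this identification is $N(\sigma)$-equivariant because all the constructions involved — the splitting of $k[\sigma]$, the action of $R(\sigma)_{1/n}$, and the idempotent $e_\sigma$ — are natural in $\sigma$ and hence compatible with the conjugation action of $N(\sigma)$. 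Taking $N(\sigma)$-invariants and summing over $\sigma \in \vphi$ gives \eqref{eq:formula-main2}.

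For part (ii), the additional hypothesis is that $\mathrm{D}$ is $l$-linear with $l$ containing the $n^{\mathrm{th}}$ roots of unity and $1/n \in l$; the point is that over such an $l$ the ring $\widetilde{R}(\sigma)_{1/n} \otimes_{\bbZ[1/n]} l$ becomes particularly simple. Concretely, $R(\sigma)_{1/n} \otimes l \cong \prod_{\chi} l$ over the characters $\chi$ of the cyclic group $\sigma$, and the idempotent $e_\sigma$ cuts out exactly the factors corresponding to the \emph{faithful} characters — the ones that do not factor through any proper subgroup — by the very characterization of $e_\sigma$ recalled before Theorem \ref{thm:main} as the maximal idempotent killed by all restrictions to proper subgroups. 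So $\widetilde{R}(\sigma)_{1/n} \otimes l$ is a product of copies of $l$ indexed by the faithful characters of $\sigma$, hence a free $l$-module of rank equal to the number of generators of $\sigma$. Then I would reorganize the double indexing: a pair $(\sigma, \text{faithful character }\chi\text{ of }\sigma)$, modulo the $N(\sigma)$-action, is the same data as a cyclic subgroup together with a chosen generator up to conjugacy, which is precisely an element $g \in G$ up to conjugacy (with $\sigma = \langle g \rangle$); and under this dictionary the normalizer $N(\sigma)$ acting on the set of generators corresponds to the centralizer $C(g)$ acting trivially on the distinguished generator $g$. Matching $E(X^\sigma) = E(X^{\langle g\rangle}) = E(X^g)$ — since $X^{\langle g \rangle} = X^g$ for $g$ a generator of $\sigma$ — and carefully tracking how the $N(\sigma)$-invariants of $(E(X^\sigma) \otimes \widetilde{R}(\sigma)_{1/n})$ decompose into a sum of $C(g)$-invariants of $E(X^g)$ over the relevant $g$, one arrives at \eqref{eq:formula-main3}; the second isomorphism there is the standard repackaging of a sum of centralizer-invariants as the $G$-invariants of the full sum over $G$.

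Finally, for the monoidal refinement, I would observe that when $E$ is lax monoidal the isomorphism \eqref{eq:formula-main} of Theorem \ref{thm:main} is already an isomorphism of monoids, so it suffices to check that each of the subsequent identifications is multiplicative. This is where the only real care is needed: the product on $E([X/G])$ is induced by tensor product of perfect complexes, and on the fixed-point side it mixes the components — the product of the $\sigma$-summand with the $\sigma'$-summand lands in summands indexed by subgroups generated by one element from each. Concretely one must verify that the ring isomorphism $k[\sigma] \cong k^{|\sigma|}$ (and the resulting identification of $E(X^\sigma \times \Spec k[\sigma])$ with $E(X^\sigma) \otimes R(\sigma)_{1/n}$) respects the relevant convolution-type product, and that restricting to the $e_\sigma$-part and reindexing by conjugacy classes of group elements is compatible with it. I expect this bookkeeping of the monoid structure through the reindexing to be the main obstacle — the underlying module-level statements are essentially formal consequences of Theorem \ref{thm:main} plus the splitting of the group algebra, but pinning down that the multiplication transports correctly (and that it is the expected "group-algebra-twisted" product on the fixed-point data) requires unwinding the functoriality of $E$ with respect to the diagonal and the various closed immersions $X^\sigma \hookrightarrow X$, $X^{\sigma\sigma'} \hookrightarrow X^\sigma$.
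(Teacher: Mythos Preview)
Your approach to parts (i) and (ii) is essentially the same as the paper's: the paper proves, as a separate proposition, that when $k$ contains the $n^{\mathrm{th}}$ roots of unity one has $\widetilde{\U}([X^\sigma/\sigma])_{1/n} \simeq \U(X^\sigma)_{1/n} \otimes_{\bbZ[1/n]} \widetilde{R}(\sigma)_{1/n}$ as monoids, using exactly the splitting $k[\sigma] \simeq k^{|\sigma|}$ you describe, and then deduces (i) via the universal property of $\U$. For (ii) the paper uses the explicit identification $\widetilde{R}(\sigma)_l \simeq \Map(\mathrm{gen}(\sigma), l)$ (your ``faithful characters'' indexing is equivalent to the paper's ``generators'' indexing after choosing an isomorphism between the $n^{\mathrm{th}}$ roots of unity in $k$ and in $l$) and then the same reindexing $G = \amalg_{\sigma \in \varphi} \mathrm{gen}(\sigma)$, $X^{\langle g\rangle} = X^g$.

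Where you diverge, and overcomplicate, is the monoidal refinement. You anticipate that the product ``mixes the components'' across different $\sigma$, but this does not happen: Theorem~\ref{thm:main} already asserts that \eqref{eq:formula-main} is a monoid isomorphism, so the product on the right-hand side is the \emph{componentwise} one on the direct sum of monoids $\widetilde{E}([X^\sigma/\sigma])^{N(\sigma)}$ (each $\gamma_\sigma^\ast$ is monoidal, and $e_\sigma$ is an idempotent acting compatibly with the monoid structure because that structure is $R(\sigma)_{1/n}$-linear). So for the corollary the only thing left is that each identification $\widetilde{E}([X^\sigma/\sigma]) \simeq E(X^\sigma) \otimes_{\bbZ[1/n]} \widetilde{R}(\sigma)_{1/n}$ is itself a monoid isomorphism. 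The paper handles this cleanly by working at the level of the universal additive invariant: $\U([X^\sigma/\sigma]) \simeq \U(X^\sigma) \otimes \U(B\sigma)$ is monoidal since $\U$ is symmetric monoidal, and the comparison $\U(k) \otimes_\bbZ R(\sigma) \to \U(B\sigma)$ is a morphism of monoids by construction; applying $e_\sigma$ and then the equivalence \eqref{eq:equivalence2} finishes. Your direct-at-$E$ route also works, but the cross-term bookkeeping you flag as ``the main obstacle'' is unnecessary.
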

\begin{corollary}\label{cor:main2}
If $E$ is monoidal, then \eqref{eq:formula-main} reduces to an isomorphism of monoids
\begin{equation*}
E([X/G]) \simeq \bigoplus_{\sigma \in \vphi} (E(X^\sigma) \otimes \widetilde{E}(B\sigma))^{N(\sigma)}\,,
\end{equation*}
where $B\sigma:=[\bullet/\sigma]$ stands for the classifying stack of $\sigma$.
\end{corollary}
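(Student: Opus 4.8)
The goal is to deduce Corollary \ref{cor:main2} from Theorem \ref{thm:main}, so the plan is to specialize the general isomorphism \eqref{eq:formula-main} and identify the summand $\widetilde{E}([X^\sigma/\sigma])$ with $E(X^\sigma)\otimes\widetilde{E}(B\sigma)$ when $E$ is monoidal (hence in particular lax monoidal, so that \eqref{eq:formula-main} is already an isomorphism of monoids and everything in sight is compatible with the $R(\sigma)_{1/n}$- and $N(\sigma)$-actions). First I would recall that $\sigma$ acts trivially on $X^\sigma$, so that as observed in the second remark after Theorem \ref{thm:main}, $\perf_{\dg}([X^\sigma/\sigma])$ is Morita equivalent to $\perf_{\dg}(X^\sigma)\otimes\perf_{\dg}(B\sigma)$ — more precisely, $[X^\sigma/\sigma]\simeq X^\sigma\times B\sigma$ as stacks, since the trivial action means the quotient stack factors. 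A monoidal additive invariant $E$ sends this external tensor product of dg categories to the tensor product in $\mathrm{D}$, giving $E([X^\sigma/\sigma])\simeq E(X^\sigma)\otimes E(B\sigma)$.

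The main point is then to check that this monoidal identification is equivariant for the $R(\sigma)_{1/n}$-action, so that applying the idempotent $e_\sigma$ passes to the second factor: $\widetilde{E}([X^\sigma/\sigma])\simeq E(X^\sigma)\otimes\widetilde{E}(B\sigma)$. This is where I expect the only real work to be. The $R(\sigma)$-action on $E([X^\sigma/\sigma])$ is by tensoring with the pullback of a representation $V$ along $[X^\sigma/\sigma]\to B\sigma$; under the factorization $[X^\sigma/\sigma]\simeq X^\sigma\times B\sigma$ this pullback is $\cO_{X^\sigma}\boxtimes V$, so the action is $\id\otimes(V\otimes-)$, i.e. it is entirely concentrated on the $B\sigma$-factor. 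Since $E$ is monoidal, tensoring against a fixed object of the source is sent to tensoring against its image, so the $R(\sigma)_{1/n}$-module structure on $E(X^\sigma)\otimes E(B\sigma)$ is $\id_{E(X^\sigma)}\otimes(\text{action on }E(B\sigma))$; applying the idempotent $e_\sigma$ therefore yields $E(X^\sigma)\otimes e_\sigma E(B\sigma)=E(X^\sigma)\otimes\widetilde{E}(B\sigma)$, using that in an $R(\sigma)_{1/n}$-linear idempotent complete category the functor $E(X^\sigma)\otimes-$ commutes with taking the $e_\sigma$-summand.

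Finally I would check compatibility with the $N(\sigma)$-action: $N(\sigma)$ acts on $[X^\sigma/\sigma]$, hence on $X^\sigma$ and on $B\sigma$ (via the conjugation action of $N(\sigma)$ on $\sigma$), and the Morita equivalence $[X^\sigma/\sigma]\simeq X^\sigma\times B\sigma$ is $N(\sigma)$-equivariant; since $E$ is monoidal it intertwines this with the diagonal $N(\sigma)$-action on $E(X^\sigma)\otimes E(B\sigma)$, which restricts to the diagonal action on $E(X^\sigma)\otimes\widetilde{E}(B\sigma)$ because the idempotent $e_\sigma$ is $N(\sigma)$-invariant (being canonical). Taking $N(\sigma)$-invariants and summing over $\sigma\in\vphi$ then turns \eqref{eq:formula-main} into the claimed isomorphism $E([X/G])\simeq\bigoplus_{\sigma\in\vphi}(E(X^\sigma)\otimes\widetilde{E}(B\sigma))^{N(\sigma)}$, and since all identifications are monoidal this is an isomorphism of monoids. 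The main obstacle, such as it is, is purely bookkeeping: keeping the $R(\sigma)_{1/n}$- and $N(\sigma)$-actions synchronized through the monoidal identification and verifying that the ``diagonal'' action on the right-hand side is exactly the one induced from Theorem \ref{thm:main}.
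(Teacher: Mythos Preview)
Your proposal is correct and follows essentially the same approach as the paper: both use the Morita equivalence $\perf_{\dg}([X^\sigma/\sigma])\simeq\perf_{\dg}(X^\sigma)\otimes\perf_{\dg}(B\sigma)$ (Proposition~\ref{prop:generator}/Corollary~\ref{cor:monoids}), check that the $R(\sigma)$-action lands on the $B\sigma$-factor, and then apply $e_\sigma$. The only organizational difference is that the paper performs these steps once at the level of the universal invariant $\U$ in $\NMot(k)_{1/n}$ and then invokes the equivalence \eqref{eq:equivalence2} to pass to any monoidal $E$, whereas you work directly with $E$ throughout; your explicit check of $N(\sigma)$-equivariance is left implicit in the paper.
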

\begin{example}[Algebraic $K$-theory]\label{ex:algebraic1}
As mentioned in Example \ref{ex:K-theory} below, algebraic $K$-theory is a lax symmetric monoidal additive invariant. Therefore, in the case where $k$ contains the $n^{\mathrm{th}}$ roots of unity, Corollary \ref{cor:main1}(i) leads to the following isomorphism of $\bbZ$-graded commutative $\bbZ[1/n]$-algebras\footnote{As Nick Kuhn kindly informed us, in the topological setting the above formula \eqref{eq:Vistoli} goes back to the pioneering work of tom Dieck \cite[\S7.7]{Dieck}; see also \cite[\S6]{Kuhn}.}:
\begin{equation}\label{eq:Vistoli}
K_\ast([X/G])_{1/n} \simeq \bigoplus_{\sigma \in \vphi} (K_\ast(X^\sigma)_{1/n} \otimes_{\bbZ[1/n]}
\widetilde{R}(\sigma)_{1/n})^{N(\sigma)}\,.
\end{equation}
Vistoli established the formula \eqref{eq:Vistoli} in \cite[Thm.~1]{Vistoli} under the weaker assumptions that $X$ is regular, Noetherian, and of finite Krull dimension, but under the additional assumption 
that $X$ carries an ample line bundle\footnote{The assumption that
$X$ carries an ample line bundle  can 
be removed without affecting the validity of Vistoli's results; consult Remark \ref{rk:ample} below.}. Moreover, since he did not assume that $1/n \in k$, he excluded from the direct sum the (conjugacy classes of) cyclic subgroups whose order is divisible by $p$. Note that Corollary \ref{cor:main1}(i) enables us to upgrade \eqref{eq:Vistoli} to an homotopy equivalence of spectra.
\end{example}
\begin{example}[Mixed complex]\label{ex:mixed2}
As mentioned in Example \ref{ex:mixed} below, the mixed complex ${{\mathsf{C}}}$ is a symmetric monoidal additive invariant. Therefore, in the case where $k$ contains the $n^{\mathrm{th}}$ roots of unity, Corollary \ref{cor:main1}(ii) leads to the following isomorphism of commutative monoids in the derived category of mixed complexes:
\begin{equation}\label{eq:Baranovsky}
{{\mathsf{C}}}([X/G]) \simeq (\bigoplus_{g \in G} {{\mathsf{C}}}(X^g))^G\,.
\end{equation}
Note that since ${{\mathsf{C}}}$ is compatible with base-change and
the morphism \eqref{eq:Baranovsky}, induced by pull-back, is defined
over $k$, the assumption that $k$ contains the $n^{\mathrm{th}}$ roots
of unity can be removed!
Baranovsky established the above formula \eqref{eq:Baranovsky} in
\cite[Thm.~1.1 and Prop.~3.1]{Baranovsky} under the additional
assumption that $X$ is quasi-projective. Moreover, he used
$G$-coinvariants instead of $G$-invariants. Since $1/n\in k$,
$G$-coinvariants and $G$-invariants are canonically isomorphic. Hence,
the difference between \eqref{eq:Baranovsky} and Baranovsky's formula
is only ``cosmetic''; see also \cite[Cor.\ 1.17(2)]{Caldararu5}. The
advantage of $G$-invariants over $G$-coinvariants is that 
the former construction preserves monoid structures.
\end{example}
\begin{remark}[Orbifold cohomology]
Cyclic homology and its
variants factor through ${{\mathsf{C}}}$. Consequently, a formula similar to \eqref{eq:Baranovsky} holds for all these
invariants. 
For example, when $k=\bbC$ and $E$ is periodic cyclic homology $H\!P_\ast$, the
Hochschild-Kostant-Rosenberg theorem leads to an isomorphism\footnote{Let $\cX:=[X/G]$ be the (global) orbifold and $I(\cX):=[(\amalg_{g \in G} X^g)/G]$ the inertia stack of $\cX$. Under these notations, the above isomorphism \eqref{eq:inertia} can be re-written as $H\!P_\ast(\cX)\simeq H^\ast(I(\cX),\bbC)$. Halpern-Leistner and Pomerleano \cite{HLP}, and To\"en (unpublished), using the techniques in \cite{Toen2}, extended the latter isomorphism to all smooth Deligne-Mumford stacks $\cX$.} of $\bbZ/2$-graded $\bbC$-vector spaces 
\begin{equation}
\label{eq:inertia}
H\!P_\ast([X/G]) \simeq (\bigoplus_{g\in G} H^\ast(X^g,\bbC))^G =: H^\ast_{\mathrm{orb}}(X/\!\!/G,\bbC)\,,
\end{equation}
where the right-hand side stands for orbifold cohomology in the sense of Chen-Ruan \cite{CR}. This is Baranovsky's \cite{Baranovsky} beautiful observation
``periodic cyclic homology $=$ orbifold cohomology''. In Corollary \ref{cor:az4} below, we will extend isomorphism \eqref{eq:inertia} to the case where $X$ is equipped to a $G$-equivariant sheaf of Azumaya algebras. Consult also Example \ref{ex:TP} below for the positive characteristic analogue of \eqref{eq:inertia}.
\end{remark}
%
%
\begin{example}[Topological Hochschild homology]
As explained in \S\ref{sub:THH} below, topological Hochschild homology $THH_\ast(-)$ is a lax symmetric monoidal additive invariant with values in the category of $\bbZ$-graded $k$-vector spaces. Therefore, in the case where $k$ contains the $n^{\mathrm{th}}$ roots of unity, Corollary \ref{cor:main1}(ii) leads to the following isomorphism of $\bbZ$-graded commutative $k$-algebras:
\begin{equation}\label{eq:THH}
THH_\ast([X/G]) \simeq (\bigoplus_{g \in G} THH_\ast(X^g))^G\,.
\end{equation}
To the best of the authors' knowledge, the formula \eqref{eq:THH} is new in the literature. 

Let $l$ be the field obtained from $k$ by adjoining the $n^{\mathrm{th}}$ roots of unity. Note that by combining Proposition \ref{prop:THH} below with the following Morita equivalences
\begin{eqnarray*}
\perf_\dg([X_l/G])\simeq \perf_\dg([X/G])\otimes_k l && \perf_\dg(X_l^g) \simeq \perf_\dg(X^g) \otimes_k l\,,
\end{eqnarray*}
we can remove the assumption that $k$ contains the $n^{\mathrm{th}}$ roots of unity!
\end{example}
\begin{example}[Periodic topological cyclic homology]\label{ex:TP}
Let $k$ be a perfect field of characteristic $p>0$, $W(k)$ the associated ring of $p$-typical Witt vectors, and $K:=W(k)[1/p]$ the fraction field of $W(k)$. As mentioned in Example \ref{ex:TC} below, periodic topological cyclic homology $TP$ is a lax symmetric monoidal additive invariant. Since $TP_0(k)\simeq W(k)$ (see \cite[\S4]{TP}), we observe that $TP(-)_{1/p}$ can be promoted to a lax symmetric monoidal additive invariant with values in $\bbZ/2$-graded $K$-vector spaces. Therefore, in the case where $k$ contains the $n^{\mathrm{th}}$ roots of unity\footnote{Recall that $W(k)$ comes equipped with a multiplicative Teichm\"uller map $k \to W(k)$ and that $K$ is of characteristic zero. This implies that $K$ contains the $n^{\mathrm{th}}$ roots of unity and that $1/n \in K$.} (\eg\ $k=\bbF_{p^d}$ with $d$ the multiplicative order of $p$ modulo $n$), Corollary \ref{cor:main1}(ii) leads to the following isomorphism of $\bbZ/2$-graded commutative $K$-algebras:
\begin{equation}\label{eq:formula-TP}
TP_\ast([X/G])_{1/p} \simeq (\bigoplus_{g \in G} TP_\ast(X^g)_{1/p})^G\,.
\end{equation}
To the best of the authors' knowledge, the formula \eqref{eq:formula-TP} is new in the literature. 

In the case where $X$ is moreover proper, Hesselholt proved in \cite[Thms.~5.1 and 6.8]{TP} that $TP_0(X)\simeq \bigoplus_{i\,\mathrm{even}} H^i_{\mathrm{crys}}(X/W(k))$ and $TP_1(X)\simeq \bigoplus_{i\,\mathrm{odd}} H^i_{\mathrm{crys}}(X/W(k))$, where $H^\ast_{\mathrm{crys}}(-)$ stands for crystalline cohomology. Therefore, in this case, the above formula \eqref{eq:formula-TP} can be re-written as the following isomorphism of (finite dimensional) $\bbZ/2$-graded $K$-vector spaces:
\begin{equation}\label{eq:formula-TP1}
TP_\ast([X/G])_{1/p}\simeq (\bigoplus_{g \in G} H^\ast_{\mathrm{crys}}(X^g))^G\,.
\end{equation}
Morally speaking, \eqref{eq:formula-TP1} is the positive characteristic analogue of \eqref{eq:inertia}. In other words, Baranovsky's beautiful observation admits the following analogue: ``periodic topological cyclic homology = orbifold cohomology in positive characteristic''.
\end{example}
\begin{remark}[Proof of Theorem \ref{thm:main}]
Our  proof of Theorem \ref{thm:main}, and consequently of Corollaries \ref{cor:main1} and \ref{cor:main2}, is different from the proofs of Vistoli \cite{Vistoli} and 
Baranovsky \cite{Baranovsky}
(which are themselves also very different). Nevertheless, we do borrow some of ingredients from Vistoli's proof. 
In fact, using the formalism of
noncommutative motives (see \S\ref{sub:universal}), we are able to ultimately reduce the proof of the formula \eqref{eq:formula-main} to the proof of the $K_0$-case of Vistoli's
formula \eqref{eq:Vistoli}; consult \S\ref{sec:proof} for details. Note, however, that
we {\em cannot} mimic Vistoli's arguments because they depend in an essential way
on the d\'evissage property of $G$-theory (=$K$-theory for smooth schemes), which does not hold for many interesting
additive invariants. For example, as explained by Keller in \cite[Example 1.11]{Exact}, Hochschild homology, and consequently the mixed
  complex ${{\mathsf{C}}}$, do {\em not} satisfy d\'evissage.
  \end{remark}

\begin{remark}[McKay correspondence]
  In many cases, the dg category $\perf_\dg([X/G])$ is known to be Morita equivalent 
to $\perf_\dg(Y)$ for a crepant 
  resolution $Y$ of the (singular) geometric quotient $X/\!\!/G$; see \cite{BezKal2,BKR,KV,
    Kawamata}. This is generally referred to as the ``McKay correspondence''.
Whenever it holds, we can replace $[X/G]$ by $Y$ in all the above formulas. Here is an illustrative example (with $k$ algebraically closed): the cyclic group $G=C_2$ acts on any abelian surface $S$  by the
  involution $a \mapsto -a$ and the Kummer surface   $\mathrm{Km}(S)$ is defined as
the blowup of $S/\!\!/C_2$ in its $16$ singular points.
In this case, the dg category $\perf_\dg([S/C_2])$ is Morita equivalent to 
  $\perf_\dg(\mathrm{Km}(S))$. Consequently, Corollary \ref{cor:main1}(i) leads to an isomorphism 
\begin{equation}\label{eq:McKay}
E(\operatorname{Km}(S))\simeq E(k)^{\oplus 16}\oplus E(S)^{C_2}\,.
\end{equation}
 Since the Kummer surface is Calabi-Yau, the category $\perf(\operatorname{Km}(S))$ does not admit 
any non-trivial semi-orthogonal decompositions. Therefore, the above decomposition \eqref{eq:McKay} is {\em not} induced from a semi-orthogonal decomposition.
\end{remark}
\subsubsection*{Smooth quotients} Let us write $\pi\colon X \to X/\!\!/G$ for the quotient morphism. Our second main result is the following:
\begin{theorem}\label{thm:main2}
\label{thm:smoothquotient}
Let $E\colon \dgcat(k) \to \mathrm{D}$ be an additive invariant $E$ with values in a $\bbZ[1/n]$-linear category. If $X/\!\!/G$ is $k$-smooth (e.g.\ if the $G$-action is free), then the induced morphism $\pi^\ast :E(X/\!\!/G) \r E(X)^G$ is invertible.
\end{theorem}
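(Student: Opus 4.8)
The plan is to reduce, via the universal property of additive invariants, to an isomorphism of Grothendieck groups, and then to prove that isomorphism by a geometric argument exploiting the smoothness of $X/\!\!/G$. Throughout put $Y:=X/\!\!/G$ and $\cA:=\pi_\ast\cO_X$. Every additive invariant factors, through a unique additive functor, via the universal additive invariant $\Uadd\colon\dgcat(k)\to\Madd$ (see \S\ref{sub:universal}); after $-\otimes_{\bbZ}\bbZ[1/n]$ the target is an idempotent complete $\bbZ[1/n]$-linear additive category, the motives $\Uadd(\perf_\dg(W))$ of smooth $k$-schemes $W$ form a conservative family, and $\Hom(\Uadd(\perf_\dg(W)),\Uadd(\perf_\dg(Z)))_{1/n}\simeq K_0(W\times Z)_{1/n}$. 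Since $\pi^\ast$ is induced by the dg-functor $\pi^\ast\colon\perf_\dg(Y)\to\perf_\dg(X)$, and since $\pi\circ g=\pi$ forces the induced map of motives into the $G$-fixed summand, it suffices to prove the theorem with $E$ replaced by $K_0(-)_{1/n}$. The family of such statements is stable under $X\rightsquigarrow W\times X$ for $W$ smooth (note $W\times Y\simeq(W\times X)/\!\!/G$ is again smooth), so we are reduced to showing that
\[
\pi^\ast\colon K_0(Y)_{1/n}\longrightarrow K_0(X)_{1/n}^{G}
\]
is an isomorphism whenever $Y=X/\!\!/G$ is smooth.

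\emph{Split injectivity.} As $X$ is Cohen--Macaulay, $Y$ is regular, and $\pi$ is finite surjective with $0$-dimensional fibres, miracle flatness makes $\pi$ finite flat; hence $\cA$ is a locally free $\cO_Y$-algebra whose rank on each connected component of $Y$ divides $n$, and $\cA^G=\cO_Y$. In particular $\pi_\ast$ is a dg-functor (restriction of scalars along $\cO_Y\to\cA$, preserving perfection because $\cA$ is $\cO_Y$-perfect), and the projection formula gives $\pi_\ast\circ\pi^\ast\simeq(-)\otimes_{\cO_Y}\cA$, i.e.\ multiplication by $[\cA]$ on $K_0(Y)_{1/n}$. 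Writing $[\cA]=\rho+\nu$ with $\rho$ the locally constant rank (a unit in $\bbZ[1/n]$) and $\nu$ of rank $0$, the class $\nu$ lies in the first step of the $\gamma$-filtration of $K_0(Y)$, which is nilpotent since $Y$ is regular of finite Krull dimension; hence $[\cA]$ is a unit in $K_0(Y)_{1/n}$, the operator $\pi_\ast\circ\pi^\ast$ is invertible, and $\pi^\ast$ is split injective with canonical retraction $[\cA]^{-1}\circ\pi_\ast$.

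\emph{Surjectivity onto $K_0(X)_{1/n}^{G}$.} This is the decisive step --- equivalently $\ker(\pi_\ast)\cap K_0(X)_{1/n}^{G}=0$ --- and the only place where one really uses smoothness of $Y$ beyond its finite-dimensionality. I would localize on $Y$ and invoke Luna's \'etale slice theorem, which presents $\pi$ \'etale-locally as a quotient $V\to V/\!\!/H$, with $H$ the stabilizer of a point and $V$ an $H$-representation; smoothness of $Y$ forces $V/\!\!/H$ to be smooth, so by the Chevalley--Shephard--Todd theorem $H$ acts on $V$ as a reflection group, $V/\!\!/H\cong\bbA^{\dim V}$, and $\cO_V$ is \emph{free} over $\cO_{V/\!\!/H}$; in this local model the assertion becomes transparent, both sides being identified (after inverting $|H|$) with the $K_0$ of the base. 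Alternatively, and avoiding slices, one can run a d\'evissage / induction on $\dim X$ along the fixed loci $X^g$, exactly as in Vistoli's proof --- which is legitimate precisely because we have already descended to $K_0$, whereas a general additive invariant does \emph{not} satisfy d\'evissage (cf.\ the example of Keller recalled after Theorem~\ref{thm:main}). I expect the bookkeeping here to be the main obstacle: either reconciling the \'etale (non-Zariski) nature of Luna slices with the excision inputs available for $K_0$, or, in the d\'evissage approach, keeping track of the normalizer actions on the strata --- which is where the structural kinship with Theorem~\ref{thm:main} and with Vistoli's argument is heaviest.
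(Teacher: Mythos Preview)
Your reduction to $K_0$ is where the argument breaks. You assert that $\Hom(\Uadd(\perf_\dg(W)),\Uadd(\perf_\dg(Z)))_{1/n}\simeq K_0(W\times Z)_{1/n}$ and that the motives of smooth $k$-schemes form a conservative family in $\NMot(k)_{1/n}$. Neither is available here: the Hom identification (Remark~\ref{rk:Homs}) requires the source dg category to be smooth \emph{and proper}, and even over smooth proper $W$ there is no conservativity statement of the kind you invoke. So knowing that $K_0(W\times Y)_{1/n}\to K_0(W\times X)_{1/n}^G$ is an isomorphism for all smooth $W$ does not, by itself, force the motivic map $\U(Y)_{1/n}\to\U(X)_{1/n}^G$ to be an isomorphism. (Compare with the proof of Proposition~\ref{prop:key}: there the reduction to $K_0$ works only \emph{after} one has produced a one-sided inverse, so that the remaining obstruction is an idempotent in a ring acting faithfully on its own $K_0$.)

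Your split-injectivity step is fine and is essentially what the paper does. The divergence is in the second half. Instead of trying to prove surjectivity of $\pi^\ast$ on $K_0$, the paper works in the auxiliary category $\MMot(Y)_{1/n}$ of \S\ref{sec:G0motives}, where $\End(\UU(X)_{1/n})\simeq G_0(X\times_Y X)_{1/n}$, and compares the two idempotents $e=(1/\xi)\,\pi^\ast\pi_\ast$ and $e'=(1/n)\sum_{g\in G} g_\ast$ acting on $\UU(X)_{1/n}$. One has $e'e=ee'=e$, so it suffices to show $e'-e$ is nilpotent. Writing ${{\Gamma}}=\amalg_{g}\Gamma_g$ for the union of graphs, the birational map ${{\Gamma}}\to X\times_Y X$ identifies $e$ and $e'$ generically; hence $e'-e\in F^1 G_0(X\times_Y X)_{1/n}$, and a short groupoid/convolution argument (Lemma~\ref{lem:convolution}) shows the composition law respects the codimension filtration, giving nilpotence. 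This bypasses both Luna slices and d\'evissage, and in particular avoids the \'etale-vs-Zariski bookkeeping you anticipated.
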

\begin{example}[Hochschild homology]
As mentioned in Example \ref{ex:mixed}, Hochschild homology $H\!H$ is an additive invariant. Therefore, whenever the geometric quotient $X/\!\!/G$ is $k$-smooth, Theorem \ref{thm:main2} leads to an isomorphism $H\!H(X/\!\!/G) \simeq H\!H(X)^G$. This isomorphism was established in \cite[Prop.~2.1.2]{PolishchukVdB} using the Hochschild-Kostant-Rosenberg theorem.
\end{example}
\begin{remark} \label{rk:natural} Assume that we are in the situation
  of Corollary \ref{cor:main1}(ii) and that all the geometric
  quotients $X^g/\!\!/ C(g)$ are $k$-smooth. In this case,
  \eqref{eq:formula-main3} reduces to an isomorphism\footnote{It is natural to ask if such an isomorphism
    is induced from a semi-orthogonal decomposition of $\perf([X/G])$
    with components $\perf(X^g/\!\!/C(g))$.  This was the motivating
    question for the work \cite{PolishchukVdB}. A positive answer to
    this question was obtained therein in many cases. However, the
    required semi-orthogonal decompositions are usually highly
    non-trivial; this complexity already occurs in the ``simple'' case
    of a symmetric group acting on a product of copies of a
      smooth curve. On the other hand, an example of a (non-faithful) $G$-action where a semi-orthogonal decomposition does not exist was
    constructed in \cite{LuntsBerghSchnurer}.}
  $E([X/G])\simeq \bigoplus_{g\in G\!/\!\sim} E(X^g/\!\!/C(g))$. This holds, for example, in the case of a  symmetric group acting on a product of copies of a smooth curve.
\end{remark}

\subsubsection*{Equivariant Azumaya algebras}
\def\uSpec{\underline{\mathrm{S}}\mathrm{p}\underline{\mathrm{ec}}}

Let $\cF$ be a flat quasi-coherent sheaf of algebras over $[X/G]$, 
and $\perf_\dg([X/G];\cF)$ the canonical dg enhancement of the category of $G$-equivariant perfect $\cF$-modules $\perf([X/G];\cF)$. Given an additive invariant, let us write $E([X/G];\cF)$ instead of $E(\perf_\dg([X/G];\cF))$. Finally, given a cyclic subgroup $\sigma \in \varphi$, let us denote by $\cF_\sigma$ the pull-back of $\cF$ along the morphism $[X^\sigma/\sigma] \to [X/G]$; note that $\cF_\sigma$ is a $N(\sigma)$-equivariant sheaf of algebras over $X^\sigma$. Under the above notations, our third main result which extends Theorem \ref{thm:main}, is the following: 
%
%
\begin{theorem}
\label{thm:formula-main4}
For every additive invariant $E\colon \dgcat(k) \to \mathrm{D}$, with values in a 
$\bbZ[1/n]$-linear category, we have an isomorphism
\begin{equation}
\label{eq:algebras}
E([X/G]; \cF) \simeq \bigoplus_{\sigma \in \vphi} \widetilde{E}([X^\sigma/\sigma]; \cF_\sigma)^{N(\sigma)}
\end{equation}
induced by pull-back with respect to the morphisms $[X^\sigma/\sigma]\r[X/G]$.
\end{theorem}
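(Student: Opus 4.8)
The strategy is to reduce Theorem~\ref{thm:formula-main4} to Theorem~\ref{thm:main} by the device of ``base-change along $\cF$'', i.e.\ by working over the dg category $\perf_\dg([X/G];\cF)$ as if it were the perfect complexes of an auxiliary noncommutative orbifold. Concretely, the category $\perf([X/G];\cF)$ of $G$-equivariant perfect $\cF$-modules should be understood as $\perf$ of the ``quotient stack'' $[\uSpec_X(\cF)/G]$ in the noncommutative sense, where $\uSpec_X(\cF)$ is the relative noncommutative spectrum; the point is that $\cF$ being a flat (in our application, Azumaya) sheaf of algebras over $[X/G]$ makes $\perf_\dg([X/G];\cF)$ into a smooth proper-over-the-base dg category carrying a residual $R(G)$-action, exactly of the shape to which the noncommutative-motives machinery of \S\ref{sub:universal} applies. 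First I would isolate the precise categorical input: that the assignment $\cF \mapsto \perf_\dg(-;\cF)$ is functorial in the orbifold, is compatible with pull-back along $[X^\sigma/\sigma]\to[X/G]$ (which is how $\cF_\sigma$ enters), and carries the $R(G)$-action used in Theorem~\ref{thm:main}; this is where I would invoke the descent/Morita formalism already set up in the paper for the untwisted case.

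Next, I would re-run the proof architecture of Theorem~\ref{thm:main} verbatim with $\perf_\dg([X/G])$ replaced by $\perf_\dg([X/G];\cF)$ throughout. In the untwisted case (per the remark ``Proof of Theorem~\ref{thm:main}'') the whole statement is reduced, via the universal additive invariant $\U$ landing in noncommutative motives, to the $K_0$-case of Vistoli's formula. The analogous reduction here should land us at the $K_0$-case of Vistoli's formula \emph{for the sheaf of algebras} $\cF$ — i.e.\ a decomposition of $G_0([X/G];\cF)\otimes\bbZ[1/n]$ indexed by cyclic subgroups with fixed-point pieces twisted by $\cF_\sigma$. So the real work is: (a) checking that every structural ingredient of the noncommutative-motives argument (semi-orthogonal decompositions, the idempotent $e_\sigma$ and the splitting it induces, the $N(\sigma)$-equivariance, the identification of the ``$\sigma$ acts trivially on $X^\sigma$'' summand) survives the passage to $\cF$-modules; and (b) supplying the twisted $K_0$-input.

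For step (b) I expect two viable routes. The clean one is to observe that $\cF$ is locally (étale-locally, or Zariski-locally after passing to a finite cover) a matrix algebra, so $\perf_\dg(-;\cF)$ is étale-locally Morita equivalent to $\perf_\dg(-)$; Vistoli's $K_0$-computation, being étale-local in nature (it rests on the dévissage/localization properties of $G$-theory, which are insensitive to Morita-equivalent twists), then goes through for $\cF$-modules with essentially no change — one only has to track the twisting sheaf through the fixed-point loci, producing $\cF_\sigma$. The alternative, if one wants to avoid re-proving anything, is to note that $\perf_\dg([X/G];\cF)$ is a \emph{dg category over $k$} to which Theorem~\ref{thm:main}'s proof applies once we know it has the right ``geometric'' features; but since Theorem~\ref{thm:main} as stated is about honest orbifolds, this really does force redoing the $K_0$-step, so I would go with the first route.

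\textbf{Main obstacle.} The delicate point is not the motivic bookkeeping but verifying that the canonical idempotent $e_\sigma \in R(\sigma)_{1/n}$ still cuts out the correct summand of $E([X^\sigma/\sigma];\cF_\sigma)$ — that is, that the $R(\sigma)_{1/n}$-action coming from tensoring with $\sigma$-representations still interacts with the $\cF_\sigma$-module structure so that $e_\sigma$ is maximal among idempotents killed by all restrictions $\sigma'\subsetneq\sigma$. Equivalently: the ``primitive part'' construction of Definition~\ref{def:primitive} must be shown to commute with the twist by $\cF_\sigma$. I expect this to hold because the $R(\sigma)_{1/n}$-action is by tensoring with representations \emph{on the equivariance datum}, which is orthogonal to the $\cF$-module structure — so the splitting is compatible — but pinning this down, together with checking that the pull-back morphisms $[X^\sigma/\sigma]\to[X/G]$ really do induce the claimed isomorphism of $\cF$-twisted invariants (and not merely a split injection), is the step that needs genuine care; everything else is a faithful transcription of the proof of Theorem~\ref{thm:main}.
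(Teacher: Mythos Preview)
Your plan misses the key architectural point of the paper and, as written, would not prove the theorem in its stated generality.

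The paper's proof of Theorem~\ref{thm:formula-main4} is essentially one line. In \S\ref{sec:G0motives} the paper builds an auxiliary category $\MMot([X/G])$ of $G_0$-motives whose Hom-groups are \emph{untwisted} $G$-theory groups $G_0(\cY\times_{\cX}\cY')$; this category depends only on the orbifold $[X/G]$ and knows nothing about $\cF$. The proof of Theorem~\ref{thm:main} constructs the morphism $\overline{\theta}$ \emph{inside} $\MMot([X/G])_{1/n}$ and shows there (Proposition~\ref{prop:key}) that it is an isomorphism, using only the untwisted Vistoli formula, via the identification $\End(\UU([X/G])_{1/n})\simeq K_0([X/G])_{1/n}$ from \eqref{eq:ring}. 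Only afterwards is $\overline{\theta}$ pushed to $\NMot(k)$ along the functor $\Psi$. Remark~\ref{rk:sheaves} observes that for any flat sheaf of $\cO_{\cX}$-algebras $\cF$ there is a second functor $\Psi_\cF:\MMot([X/G])\to\NMot(k)$ sending $\UU(\cY\stackrel{f}{\to}\cX)$ to $\U(\cY;f^\ast\cF)$, built from the same Fourier--Mukai kernels. The whole proof of Theorem~\ref{thm:formula-main4} is then: apply $\Psi_\cF$ rather than $\Psi$ to the already-established isomorphism $\overline{\theta}$. No twisted $K_0$ input is needed anywhere.

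Your proposal, by contrast, would reduce to a twisted Vistoli decomposition of $G_0([X/G];\cF)_{1/n}$, and your ``clean route'' to this relies on $\cF$ being \'etale-locally a matrix algebra. But Theorem~\ref{thm:formula-main4} assumes only that $\cF$ is a flat quasi-coherent sheaf of algebras; the Azumaya hypothesis enters only later, in Corollary~\ref{cor:az1}. For general flat $\cF$ there is no Morita-local trivialisation, so your step~(b) does not go through. Even in the Azumaya case, re-running the injectivity argument of Proposition~\ref{prop:key} on the twisted side is problematic: that argument uses that $K_0([X/G])_{1/n}$ is a unital ring acting faithfully on itself, whereas $K_0([X/G];\cF)$ is, for noncommutative $\cF$, not a ring. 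The paper sidesteps all of this by keeping the endomorphism computation on the untwisted side in $\MMot([X/G])$ and twisting only at the very end via $\Psi_\cF$. In particular, your ``main obstacle'' about $e_\sigma$ evaporates: the idempotent $e_\sigma$ acts on $\UU([X^\sigma/\sigma])$ in $\MMot([X/G])_{1/n}$ before any $\cF$ is in sight, and $\Psi_\cF$ merely transports the resulting direct-sum decomposition.
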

From now on we will assume that $\cF$ is a sheaf of Azumaya algebras over $[X/G]$, \ie a $G$-equivariant sheaf of Azumaya algebras over $X$. We will write $r$ for the product of the ranks of $\cF$ (at each one of the connected components of $X$). Let us denote by $\cF_\sigma \# \sigma$ the sheaf of skew group
algebras corresponding to the $\sigma$-action on $\cF_\sigma$. Note
that since $\sigma$ acts trivially on $X^\sigma$,
$\cF_\sigma \# \sigma$ is a sheaf of $\cO_{X^\sigma}$-algebras. Finally, let us write 
$\cZ_{\sigma}$ for the center of $\cF_\sigma\#\sigma$ and
$Y_\sigma:=\uSpec(\cZ_\sigma)$. By construction, $\cF_\sigma\#\sigma$
(and hence $\cZ_{\sigma}$) is
$\sigma$-graded. 

Let $E\colon \dgcat(k) \to \mathrm{D}$ be an additive invariant as in
Theorem \ref{thm:formula-main4}, and assume that $k$ contains the
$n^{\mathrm{th}}$ roots of unity. As mentioned above, $\cZ_\sigma$ is $\sigma$-graded and therefore it comes equipped with a canonical $\sigma^\vee$-action, where $\sigma^\vee:=\Hom(\sigma, k^\times)$ stands for the dual cyclic group.
Hence, by functoriality,
$E(Y_\sigma)=E(X^\sigma;\cZ_\sigma)$ inherits a
$\bbZ[\sigma^\vee]_{1/n}$-action. Since $k$ contains the
$n^{\mathrm{th}}$ roots of unity, we have a character isomorphism
$R(\sigma)\simeq \bbZ[\sigma^\vee]$. Therefore, we can consider the
direct summand $e_\sigma E(Y_\sigma)$ of $E(Y_\sigma)$ associated to
the idempotent $e_\sigma \in R(\sigma)_{1/n}$. In the case
where the category $\mathrm{D}$ is $l$-linear for a field $l$ which
contains the $n^{\mathrm{th}}$ roots of unity and $1/n\in l$, the
$\sigma^\vee$-action on $E(Y_\sigma)$ may be translated
back\footnote{By choosing an isomorphism $\epsilon$ between the
  $n^{\mathrm{th}}$ roots of unity in $k$ and in $l$, we can identify
  $\sigma^{\vee\vee}:=\Hom(\Hom(\sigma, k^\times),l^\times)$ with
  $\sigma$. This identification is moreover natural on $\sigma$.}
into a $(\sigma^{\vee\vee}=\sigma)$-grading on $E(Y_\sigma)$. Whenever
an object $\cO$ is equipped with a $\langle g\rangle$-grading, we will
write $\cO_g$ for its degree $g$ part. In particular, $E(Y_g)_g$
stands for the degree $g$ part of $E(Y_{\langle g\rangle})$. Under the
above notations, Theorem \ref{thm:formula-main4} admits the following
refinements:
\begin{corollary}\label{cor:az1} 
Assume that $\cF$ is a sheaf of Azumaya algebras over $[X/G]$. Under this assumption, the following holds:
\begin{itemize}
\item[(i)] The structural morphism $Y_\sigma \to X^\sigma$ is a $N(\sigma)$-equivariant $\sigma^\vee$-Galois cover. Moreover, for every $g \in G$, the sheaf $\cL_g:=(\cZ_{\langle g \rangle})_g$ is a $C(g)$-equivariant line bundle (equipped with a $C(g)$-equivariant flat connection) on $X^g:=X^{\langle g\rangle}$; when $k=\bbC$, we will denote by $L_g$ the associated $C(g)$-equivariant rank one local system on $X^g$. 
\item[(ii)]  If $k$ contains the $n^{\mathrm{th}}$ roots of unity and the category $\mathrm{D}$ is $\bbZ[1/nr]$-linear, then \eqref{eq:algebras} reduces to an isomorphism
\begin{equation}
\label{eq:algebras3}
E([X/G]; \cF)\simeq \bigoplus_{\sigma \in \vphi} (e_\sigma E(Y_\sigma))^{N(\sigma)}\,.
\end{equation}
\item[(iii)] If $k$ contains the $n^{\mathrm{th}}$ roots of unity and $\mathrm{D}$ is $l$-linear for a
field $l$ which contains the $n^{\mathrm{th}}$ roots of unity and $1/nr \in l$, then \eqref{eq:algebras3} reduces to an isomorphism 
\begin{equation*}
E([X/G]; \cF)\simeq \bigoplus_{g\in G\!/\!\sim} E(Y_g)_g^{C(g)} \simeq (\bigoplus_{g\in G} E(Y_g)_g)^G\,.
\end{equation*}
\end{itemize}
\end{corollary}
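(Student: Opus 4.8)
The strategy is to deduce everything from Theorem \ref{thm:formula-main4}, exactly as Corollaries \ref{cor:main1} and \ref{cor:main2} were deduced from Theorem \ref{thm:main}. The core computation is to identify the summand $\widetilde{E}([X^\sigma/\sigma];\cF_\sigma)$ appearing in \eqref{eq:algebras} with the summand $e_\sigma E(Y_\sigma)$ of part (ii). First I would record the standard Morita-type identification: since $\sigma$ acts trivially on $X^\sigma$, the category of $\sigma$-equivariant perfect $\cF_\sigma$-modules on $X^\sigma$ is equivalent to the category of perfect modules over the sheaf of skew group algebras $\cF_\sigma\#\sigma$, so $E([X^\sigma/\sigma];\cF_\sigma)\simeq E(X^\sigma;\cF_\sigma\#\sigma)$. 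Next, because $\cF$ is Azumaya, $\cF_\sigma$ is Azumaya over $X^\sigma$; after inverting $r$ (the linearity hypothesis of (ii), (iii)) the Azumaya algebra $\cF_\sigma$ becomes Morita-trivial as far as additive invariants are concerned — more precisely, $E(X^\sigma;\cF_\sigma\#\sigma)$ depends only on the center $\cZ_\sigma$, giving $E(X^\sigma;\cF_\sigma\#\sigma)\simeq E(Y_\sigma)$, where $Y_\sigma=\uSpec(\cZ_\sigma)$. This uses that an Azumaya algebra is, $\bbZ[1/r]$-locally on its center, a matrix algebra, together with the fact that additive invariants are invariant under the Morita equivalence $\cF_\sigma\#\sigma \sim \cZ_\sigma$ once $r$ is invertible; the $\sigma$-grading on $\cF_\sigma\#\sigma$ matches the $\sigma$-grading (equivalently $\sigma^\vee$-action) on $\cZ_\sigma$, so the identification is $N(\sigma)$-equivariant and intertwines the two $R(\sigma)_{1/n}$-actions. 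Applying $e_\sigma$ and taking $N(\sigma)$-invariants then turns \eqref{eq:algebras} into \eqref{eq:algebras3}, proving (ii).

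For part (i), the claim that $Y_\sigma\to X^\sigma$ is a $\sigma^\vee$-Galois cover is a local statement on $X^\sigma$: I would argue that, locally on $X^\sigma$, the Azumaya algebra $\cF_\sigma$ with its $\sigma$-action is (up to Morita equivalence and a twist) of the form $\cO_{X^\sigma}[\sigma]$ with $\sigma$ acting by translation, for which $\cF_\sigma\#\sigma$ has center $\cO_{X^\sigma}[\sigma^\vee]$ — the group algebra of the dual group, since $k$ contains the $n^{\mathrm{th}}$ roots of unity — whose spectrum is the trivial $\sigma^\vee$-torsor. Globally this need not be trivial but it is always étale of the right degree, hence a $\sigma^\vee$-Galois cover, and the construction is manifestly $N(\sigma)$-equivariant. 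The degree-$g$ piece $\cL_g=(\cZ_{\langle g\rangle})_g$ is then a line bundle on $X^g$ by the same local computation (the degree-$g$ component of $\cO[\sigma^\vee]$ is a free rank-one module); the $C(g)$-equivariant flat connection comes from the fact that $\cZ_{\langle g\rangle}$, being the center of a skew group algebra built from the equivariant structure, carries the Gauss–Manin-type connection coming from the $\sigma$-action, and over $\bbC$ this connection has finite monodromy, yielding the rank-one local system $L_g$. Here the normalizer $N(\langle g\rangle)$ acts, and restricting to the subgroup fixing $g$ (which is $C(g)$) gives the stated $C(g)$-equivariance.

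Finally, part (iii) is the passage from cyclic-subgroup bookkeeping to element-by-element bookkeeping, identical in form to Corollary \ref{cor:main1}(ii): when $\mathrm{D}$ is $l$-linear with $l$ containing the relevant roots of unity and $1/nr\in l$, the $\sigma^\vee$-action on $E(Y_\sigma)$ becomes a $\sigma$-grading, the idempotent $e_\sigma$ picks out the "primitive" part, which is precisely $\bigoplus_{\langle g\rangle=\sigma} E(Y_g)_g$ indexed by the generators $g$ of $\sigma$ (by the characterization of $e_\sigma$ as the maximal idempotent vanishing under restriction to proper subgroups, matching the standard description of primitive idempotents in $R(\sigma)_{1/n}$ after these scalar extensions), and then $\bigoplus_{\sigma\in\vphi}$ of the $N(\sigma)$-invariants reassembles as $\bigoplus_{g\in G/\!\sim} E(Y_g)_g^{C(g)}\simeq(\bigoplus_{g\in G}E(Y_g)_g)^G$, using that $N(\sigma)$ acts on the generators of $\sigma$ with stabilizer $C(g)$. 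The main obstacle I anticipate is the first step of part (i)–(ii): making precise and correct the "$\cF_\sigma\#\sigma$ has center $\cZ_\sigma$ with $\uSpec(\cZ_\sigma)\to X^\sigma$ a $\sigma^\vee$-Galois cover, and $E(X^\sigma;\cF_\sigma\#\sigma)\simeq E(Y_\sigma)$ after inverting $r$" — i.e. controlling the interaction of the Azumaya twist with the skew group algebra construction and checking that the resulting identifications are compatible with all the group actions and gradings in play.
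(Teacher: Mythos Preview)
Your plan for parts (ii) and (iii) is essentially the paper's proof: the identification $E([X^\sigma/\sigma];\cF_\sigma)\simeq E(X^\sigma;\cF_\sigma\#\sigma)$, followed by the Azumaya reduction $E(X^\sigma;\cF_\sigma\#\sigma)\simeq E(Y_\sigma)$ after inverting $r$ (using that $\cF_\sigma\#\sigma$ is Azumaya over $\cZ_\sigma$), and then the decomposition $e_\sigma=\sum_{g\in\mathrm{gen}(\sigma)}e_g$ under the character isomorphism $l[\sigma^\vee]\simeq\mathrm{Map}(\sigma,l)$. The compatibility of the two $R(\sigma)$-actions --- the one via $V_\chi\otimes_k-$ on $[X^\sigma/\sigma]$ and the one via the grading automorphisms $\tau_\chi$ of $\cF_\sigma\#\sigma$ --- is verified in the paper by an explicit commutative square, and this is indeed the subtle point you flag at the end.

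The weak spot is your argument for part (i). The claim that ``locally on $X^\sigma$, the Azumaya algebra $\cF_\sigma$ with its $\sigma$-action is (up to Morita equivalence and a twist) of the form $\cO_{X^\sigma}[\sigma]$ with $\sigma$ acting by translation'' is not justified and in general false: an Azumaya algebra of rank $r^2$ with a $\sigma$-action (where $|\sigma|$ need bear no relation to $r$) has no such local model. \'Etale-locally $\cF_\sigma\simeq M_r(\cO)$ and, by Skolem--Noether, each $g\in\sigma$ acts by conjugation by some $u_g\in GL_r(\cO)$; the center of $\cF_\sigma\#\sigma$ is then spanned in degree $g$ by $u_g^{-1}g$, but the multiplication among these is twisted by the $2$-cocycle measuring the failure of $g\mapsto u_g$ to be a homomorphism, so you do not recover $\cO[\sigma^\vee]$ on the nose.

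The paper's argument for (i) is intrinsic and avoids any local model. It shows directly that the centralizer $\cZ'_\sigma:=(\cF_\sigma\#\sigma)^{\cF_\sigma}$ is \emph{strongly graded}: since $\cF_\sigma$ is Azumaya, $\cF_\sigma\otimes_{X^\sigma}-$ is a monoidal equivalence from $\coh(X^\sigma)$ to $\cF_\sigma$-bimodules, under which the invertible bimodule $\cF_\sigma g$ corresponds to a line bundle $(\cZ'_\sigma)_g$. Commutativity of $\cZ'_\sigma$ (whence $\cZ'_\sigma=\cZ_\sigma$) uses that $\sigma$ is cyclic: a local generator of $(\cZ'_\sigma)_g$ for $g$ a generator of $\sigma$ commutes with itself, and its powers span all graded pieces. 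Strong grading then gives that $\cZ_\sigma$ is \'etale over $\cO_{X^\sigma}$ with $\cZ_\sigma^{\sigma^\vee}=\cO_{X^\sigma}$, so $Y_\sigma\to X^\sigma$ is a $\sigma^\vee$-Galois cover, and $\cL_g=(\cZ_\sigma)_g$ is a line bundle by construction. The flat connection comes from the unique extension of derivations along the \'etale map $\cO_{X^\sigma}\to\cZ_\sigma$, not from a Gauss--Manin-type construction. This intrinsic route also makes the $N(\sigma)$- and $C(g)$-equivariance transparent, and, crucially, establishes that $\cF_\sigma\#\sigma$ is Azumaya over $\cZ_\sigma$ --- the input you need for the reduction step in (ii).
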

\begin{example}[Algebraic $K$-theory]\label{ex:K-theory2}
Algebraic $K$-theory is an additive invariant. Therefore, in the case where $k$ contains the $n^{\mathrm{th}}$ roots of unity, Corollary \ref{cor:az1}(iii) leads to the following isomorphism of $\bbZ$-graded $\bbC$-vector spaces
\begin{equation}\label{eq:twisted}
K_\ast([X/G]; \cF)_\bbC \simeq \bigoplus_{g\in G\!/\!\sim} K_\ast(Y_g)_{\bbC, g}^{C(g)}\,.
\end{equation}
It may be shown that in the particular
case when $k=\bbC$ and $\cF$ is induced from a cohomology class $\alpha \in H^2(G,\bbC^\times)$ by pull-back along the morphism $[X/G] \to BG$, the formula \eqref{eq:twisted} reduces to the algebraic analogue of the formula established by Adem-Ruan \cite[\S7]{AR} in the topological setting of twisted orbifold 
$K$-theory.
\end{example}

%
%
%
\begin{example}[Mixed complex]\label{ex:mixed3}
The mixed complex ${{\mathsf{C}}}$ is an additive invariant. Moreover, by construction, it sends graded dg categories to graded mixed complexes. This implies that $\mathsf{C}(Y_g)=\mathsf{C}(X^g;\cZ_g)$ comes equipped with a canonical $\langle g\rangle$-grading, which clearly agrees with the one defined above. Therefore, in the case where $k$ contains the $n^{\mathrm{th}}$ roots of unity and $1/nr \in k$, Corollary \ref{cor:az1}(iii) leads to the following isomorphism in the derived category of mixed complexes
\begin{equation}
\label{eq:formula8}
\mathsf{C}([X/G]; \cF)\simeq (\bigoplus_{g\in G} \mathsf{C}(Y_g)_g)^G\,,
\end{equation}
where $\mathsf{C}(Y_g)_g$ stands for the degree $g$ part of the tautological $\langle g \rangle$-grading on $\mathsf{C}(Y_g)$. The morphism \eqref{eq:formula8} is given by first restricting $\mathsf{C}([X/G]; \cF)$ to $\mathsf{C}([X^g/\langle g\rangle]; \cF_{\langle g\rangle})\simeq \mathsf{C}(Y_g)$ and then by projecting $\mathsf{C}(Y_g)$ onto its degree $g$ part $\mathsf{C}(Y_g)_g$. This shows, in particular, that \eqref{eq:formula8} is defined over $k$. Since $\mathsf{C}$ is compatible with base-change, we can then remove the assumption that $k$ contains the $n^{\mathrm{th}}$ roots of unity!
\end{example}
A formula similar to \eqref{eq:formula8} holds for all the variants of
cyclic homology. In the particular case of Hochschild homology $H\!H$
and periodic cyclic homology $H\!P$, this formula admits the following
(geometric) refinements:
\begin{corollary}
\label{cor:az3}
Assume that $1/nr\in k$. In the case of $H\!H_\ast$, \eqref{eq:formula8} reduces to an isomorphism of $\bbZ$-graded $k$-vector spaces:
\begin{equation}
\label{eq:formula6}
H\!H_\ast([X/G];\cF)\simeq (\bigoplus_{g\in G}H\!H_{\ast}(X^g,\cL_g))^G\,.
\end{equation}
\end{corollary}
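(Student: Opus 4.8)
The plan is to deduce \eqref{eq:formula6} from \eqref{eq:formula8} by computing the $\langle g\rangle$-graded mixed complex $\mathsf{C}(Y_g)$ and identifying its degree-$g$ part with $H\!H_\ast(X^g,\cL_g)$. First I would recall from Corollary \ref{cor:az1}(i) that, since $1/nr\in k$, the structural morphism $Y_g=\uSpec(\cZ_{\langle g\rangle})\to X^g$ is a $\sigma^\vee$-Galois cover; in particular $\cZ_{\langle g\rangle}=\bigoplus_{h\in\langle g\rangle}(\cZ_{\langle g\rangle})_h$ is a finite \'etale $\cO_{X^g}$-algebra, and each graded piece $(\cZ_{\langle g\rangle})_h=\cL_h$ is a line bundle on $X^g$ (equipped with its canonical flat connection). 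Because $Y_g$ is finite \'etale over $X^g$, and Hochschild homology of a smooth scheme is computed by Hochschild-Kostant-Rosenberg, the mixed complex $\mathsf{C}(Y_g)$ decomposes $\langle g\rangle$-equivariantly; concretely $H\!H_\ast(Y_g)=H\!H_\ast(X^g;\cZ_{\langle g\rangle})$, and the $\langle g\rangle$-grading on $\cZ_{\langle g\rangle}$ induces the grading on $H\!H_\ast(Y_g)$ whose degree-$h$ component is $H\!H_\ast(X^g,\cL_h)$ (Hochschild homology of $X^g$ with coefficients in the invertible bimodule $\cL_h$). Extracting the degree-$g$ part gives $\mathsf{C}(Y_g)_g$ with homology $H\!H_\ast(X^g,\cL_g)$.

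The key step is the identification of $\mathsf{C}(Y_g)_g$. I would argue as follows: by Corollary \ref{cor:az1}(i), $\cF_{\langle g\rangle}\#\langle g\rangle$ is an Azumaya algebra over its center $\cZ_{\langle g\rangle}$, so $\perf_\dg(X^g;\cF_{\langle g\rangle}\#\langle g\rangle)$ is Morita equivalent to $\perf_\dg(Y_g)$; this Morita equivalence is $\langle g\rangle$-graded (equivalently $\sigma^\vee$-equivariant), hence induces an isomorphism of $\langle g\rangle$-graded mixed complexes $\mathsf{C}(X^g;\cF_{\langle g\rangle}\#\langle g\rangle)\simeq\mathsf{C}(Y_g)$. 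Now $\mathsf{C}$ of an Azumaya algebra over a smooth base agrees with $\mathsf{C}$ of the base (this is the standard fact that Hochschild/cyclic homology is invariant under Azumaya, i.e.\ the trace map is an isomorphism after inverting the rank — here guaranteed by $1/nr\in k$). Combined with $\mathsf{C}(Y_g)\simeq\mathsf{C}(Y_g)$ and HKR for the \'etale cover $Y_g/X^g$, the degree-$g$ summand becomes $\mathsf{C}(X^g,\cL_g)$, whose homology is $H\!H_\ast(X^g,\cL_g)$ by the twisted HKR theorem. Finally, since all these identifications are $C(g)$-equivariant (the $N(\sigma)=C(g)$-action on everything is induced by functoriality), taking $G$-invariants of $\bigoplus_{g\in G}\mathsf{C}(Y_g)_g$ and passing to homology yields \eqref{eq:formula6}; one checks that $H\!H_\ast$ commutes with the finite direct sum and with the $G$-invariants because $1/n\in k$.

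The main obstacle I anticipate is making the $\langle g\rangle$-grading bookkeeping precise and checking that all the identifications — the Morita equivalence $\perf_\dg(X^g;\cF_{\langle g\rangle}\#\langle g\rangle)\simeq\perf_\dg(Y_g)$, the Azumaya-invariance of $\mathsf{C}$, and the twisted HKR isomorphism — are simultaneously compatible with both the $\langle g\rangle$-grading (so that "degree-$g$ part" is unambiguous) and the $C(g)$-action (so that taking $G$-invariants is legitimate). The twisted HKR statement "$H\!H_\ast$ of a smooth scheme with coefficients in a line bundle with flat connection $\cL_g$ equals the de Rham-type complex twisted by $\cL_g$" should be extracted from \cite{PolishchukVdB} or proven locally; the subtlety is that $\cL_g$ is not merely a line bundle but carries the flat connection coming from its position in the $\sigma^\vee$-graded algebra $\cZ_{\langle g\rangle}$, and this connection is exactly what makes $H\!H_\ast(X^g,\cL_g)$ the correct answer rather than just $H\!H_\ast(X^g)\otimes\cL_g$. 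Everything else is a routine consequence of \eqref{eq:formula8} together with the structural results already established in Corollary \ref{cor:az1}.
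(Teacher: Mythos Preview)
Your proposal is correct and follows the same route as the paper: starting from \eqref{eq:formula8}, identify the degree-$g$ piece $\mathsf{C}(Y_g)_g$ with $H\!H_\ast(X^g,\cL_g)$ using the structural description of $\cZ_{\langle g\rangle}$ from Corollary~\ref{cor:az1}(i). The paper's proof is literally the two-line chain $H\!H_\ast(Y_g)_g \simeq H\!H_\ast(X^g;\cZ_{\langle g\rangle})_g \simeq H\!H_\ast(X^g,\cL_g)$, which is exactly your first paragraph.

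That said, you are doing considerably more work than necessary. Your entire second paragraph---the Morita equivalence $\perf_\dg(X^g;\cF_{\langle g\rangle}\#\langle g\rangle)\simeq\perf_\dg(Y_g)$ and the Azumaya-invariance of $\mathsf{C}$---is redundant: those facts were already used to pass from \eqref{eq:algebras} to \eqref{eq:formula8} (see the proofs of Corollary~\ref{cor:az1}(ii)--(iii) and Example~\ref{ex:mixed3}), so once you take \eqref{eq:formula8} as your starting point there is nothing left to do on that front. Likewise, no twisted HKR theorem is needed: since $Y_g\to X^g$ is finite \'etale (Corollary~\ref{cor:az1}(i)), Hochschild homology satisfies \'etale base change, so $H\!H_\ast(Y_g)\simeq H\!H_\ast(X^g)\otimes_{\cO_{X^g}}\cZ_{\langle g\rangle}$, and extracting the degree-$g$ summand $\cL_g$ of the strongly graded algebra $\cZ_{\langle g\rangle}$ gives $H\!H_\ast(X^g,\cL_g)$ directly. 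The ``obstacles'' you list (compatibility of gradings, $C(g)$-equivariance, the role of the flat connection) do not intervene at the level of Hochschild homology and the paper does not address them; the flat connection on $\cL_g$ only becomes relevant for $H\!P$ in Corollary~\ref{cor:az4}.
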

\begin{proof}
Corollary \ref{cor:az1}(i) implies that $H\!H_\ast(Y_g)_g \simeq H\!H_\ast(X^g; \cZ_g)_g \simeq H\!H_\ast(X^g, \cL_g)$. By combining these identifications with \eqref{eq:formula8}, we then obtain \eqref{eq:formula6}.
\end{proof}
In the
particular 
case when $k=\bbC$ and $\cF$ is induced from a cohomology class
$\alpha \in H^2(G,\bbC^\times)$ by pull-back along the morphism
$[X/G] \to BG$, the formula \eqref{eq:formula6} was established by
Baranovsky and Petrov in \cite[Thm.~6]{Baranovsky-Petrov}. In the case
where $k$ is of characteristic $0$, the formula \eqref{eq:formula6}
was announced by C\v{a}ld\v{a}raru in \cite{Caldararu3}. Therein,
C\v{a}ld\v{a}raru and Arinkin used it in order to conceptually explain
an {\em ad hoc} computation of Vafa and Witten \cite{VafaWitten}
concerning~elliptic~curves.


\begin{corollary}[Twisted orbifold cohomology]\label{cor:az4}
Assume that $k=\bbC$. In the case of $H\!P_\ast$, \eqref{eq:formula8} reduces to an isomorphism\footnote{The data $L:=(L_g)_g$ defines a $G$-equivariant local system on $\amalg_{g \in G} X^g$ or, equivalently, a local system on the inertia stack $I(\cX)$ of $\cX:=[X/G]$. Under these notations, the above isomorphism \eqref{eq:inertia1} can be re-written as $H\!P_\ast(\cX;\cF) \simeq H^\ast(I(\cX),L)$.} of $\bbZ/2$-graded $\bbC$-vector spaces:
\begin{equation}
\label{eq:inertia1}
H\!P_\ast([X/G];\cF)\simeq (\bigoplus_{g\in G} H^\ast(X^g,L_g))^G\,.
\end{equation}
\end{corollary}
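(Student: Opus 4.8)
The plan is to combine the periodic cyclic homology incarnation of formula \eqref{eq:formula8} with a twisted version of the Hochschild--Kostant--Rosenberg theorem, in complete analogy with the proof of Corollary \ref{cor:az3} and with the derivation of the untwisted isomorphism \eqref{eq:inertia}. Since $H\!P_\ast$ factors through the mixed complex functor $\mathsf{C}$, and since (as noted right after \eqref{eq:formula8}) a formula analogous to \eqref{eq:formula8} holds for every variant of cyclic homology, one first obtains an isomorphism of $\bbZ/2$-graded $\bbC$-vector spaces
\[
H\!P_\ast([X/G];\cF)\;\simeq\;\Big(\bigoplus_{g\in G} H\!P_\ast(Y_g)_g\Big)^G ,
\]
where $H\!P_\ast(Y_g)_g$ denotes the degree-$g$ part of the $\langle g\rangle$-grading on $H\!P_\ast(Y_g)$ induced by the $\langle g\rangle$-grading on $\mathsf{C}(Y_g)$ of Example \ref{ex:mixed3}. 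Here one uses that $H\!P_\ast$ is a triangulated functor, that the index set $G$ is finite (so $H\!P_\ast$ commutes with the direct sum), and that $1/n\in\bbC$ (so that $(-)^G$, being the image of an idempotent, is preserved).

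It then remains to identify each summand $H\!P_\ast(Y_g)_g$ with $H^\ast(X^g,L_g)$. By Corollary \ref{cor:az1}(i) the structural morphism $Y_g\to X^g$ is a finite \'etale $\langle g\rangle^\vee$-Galois cover; consequently $Y_g$ is smooth, and the $\langle g\rangle$-grading on the pushforward to $X^g$ of $\cO_{Y_g}$ (induced by character duality from the Galois action) has degree-$g$ component the line bundle $\cL_g=(\cZ_{\langle g\rangle})_g$, equipped with its canonical flat connection $\nabla_g$. Applying the Hochschild--Kostant--Rosenberg theorem to the smooth $\bbC$-scheme $Y_g$ --- as in the derivation of \eqref{eq:inertia} --- and extracting the degree-$g$ isotypic part (equivalently, descending the de Rham complex along the finite \'etale cover) yields
\[
H\!P_\ast(Y_g)_g\;\simeq\;H^\ast_{\mathrm{dR}}\big(X^g,(\cL_g,\nabla_g)\big),
\]
the $\bbZ/2$-periodized de Rham cohomology of $X^g$ with coefficients in $(\cL_g,\nabla_g)$, which is finite-dimensional since $X^g$ is smooth of finite type over $\bbC$. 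Finally, the connection $\nabla_g$ has finite monodromy --- it arises from the $\langle g\rangle$-grading, i.e.\ from a finite \'etale cover --- hence is regular singular, so Deligne's Riemann--Hilbert correspondence identifies $H^\ast_{\mathrm{dR}}(X^g,(\cL_g,\nabla_g))$ with the cohomology $H^\ast(X^g,L_g)$ of the associated rank-one local system $L_g$. All of these identifications are $C(g)$-equivariant, so substituting into the displayed isomorphism above produces \eqref{eq:inertia1}; the monoid/lax-monoidal bookkeeping plays no role here since the statement is only about the underlying graded vector spaces.

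\textbf{Main obstacle.} The first step and the handling of the $G$-action are formal consequences of \eqref{eq:formula8} and Corollary \ref{cor:az1}. The real work is the twisted HKR identification $H\!P_\ast(Y_g)_g\simeq H^\ast_{\mathrm{dR}}(X^g,(\cL_g,\nabla_g))$: one must justify it for a smooth scheme $X^g$ that is in general neither proper nor affine --- so that $H\!P_\ast$ genuinely computes the hypercohomology of the twisted algebraic de Rham complex rather than a Hodge-completed variant --- and one must match carefully the $\langle g\rangle$-grading coming from the mixed complex $\mathsf{C}(Y_g)$ with the grading coming from the $\langle g\rangle^\vee$-Galois structure of $Y_g\to X^g$. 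Checking the regular-singularity hypothesis needed for the analytic comparison is then immediate from the finiteness of the monodromy.
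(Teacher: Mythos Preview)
Your argument is correct and follows the paper's strategy: both start from the $H\!P_\ast$ incarnation of \eqref{eq:formula8} and then identify each $H\!P_\ast(Y_g)_g$ with $H^\ast(X^g,L_g)$. The only difference is the order of the comparison steps. The paper applies HKR to $Y_g$ directly in the Betti form $H\!P_\ast(Y_g)\simeq H^\ast(Y_g,\bbC)$ (citing Fe\u{\i}gin--Tsygan and Keller), then descends topologically along the finite \'etale map $f\colon Y_g\to X^g$ via $f_\ast(\underline{\bbC})=\bigoplus_h L_h$ and extracts the degree-$g$ piece. You instead descend at the algebraic de~Rham level to $H^\ast_{\dR}(X^g,(\cL_g,\nabla_g))$ and then invoke Deligne's Riemann--Hilbert correspondence. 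These routes are equivalent; the paper's is slightly shorter because the topological descent $H^\ast(Y_g,\bbC)\simeq H^\ast(X^g,f_\ast\underline{\bbC})$ needs no regularity hypothesis, whereas your route makes the flat-bundle picture explicit. The two concerns you flag are not obstacles: the grading compatibility holds because HKR is equivariant for the $\langle g\rangle^\vee$-action on $Y_g$, and the non-properness issue is exactly what the cited references \cite{FeiginTsygan,Exact,CRSS} handle for smooth separated $\bbC$-schemes.
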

\begin{proof}
Let us write $f\colon Y_g \to X^g$ for the structural morphism. Thanks to the Hochschild-Kostant-Rosenberg theorem (see \cite{FeiginTsygan,Exact,CRSS}), the equality $f_\ast(\underline{\bbC})= \bigoplus_{g \in G} L_g$, and Corollary \ref{cor:az1}(i), we have the following isomorphisms: 
$$ H\!P_\ast(Y_g)_g \simeq H^\ast(Y_g, \bbC)_g \simeq H^\ast(X^g, f_\ast(\underline{\bbC}))_g\simeq H^\ast(X^g, L_g)\,.$$
By combining them with \eqref{eq:formula8}, we then obtain \eqref{eq:inertia1}.
\end{proof}
\subsection*{Notations}
Throughout the article, $k$ will be a field of characteristic $p\geq 0$ and $G$ a finite group of order $n$. Except in Appendix \ref{sec:appendix}, we will always assume that $1/n \in k$. In order to simplify the exposition, we will often write $\bullet$ instead of $\mathrm{Spec}(k)$, $(-)_{1/n}$ instead of $(-)_{\bbZ[1/n]}$, and make no notational distinction between a dg functor and its image under an additive invariant.
%
%

\medbreak\noindent\textbf{Acknowledgments:}
The authors are grateful to Andrei C\v{a}ld\v{a}raru for several interesting discussions; in particular, those regarding Corollary \ref{cor:az3}. The first author also would like to thank Lars Hesselholt, Michael Hopkins and Akhil Mathew for useful discussions concerning topological Hochschild homology and periodic topological cyclic homology. The second author is also grateful to Valery Lunts and Alexander Polishchuk for their insights on the relation between motivic and semi-orthogonal decompositions
of orbifolds. Finally, the authors also would like to thank Nick Kuhn for comments on Example \ref{ex:algebraic1} and for the references \cite{Dieck,Kuhn}.
\section{Preliminaries}

\subsection{Dg categories}\label{sub:dg}
Let $(\cC(k),\otimes, k)$ be the category of (cochain) complexes of 
$k$-vector spaces; we use cohomological notation. A {\em differential
  graded (=dg) category $\cA$} is a category enriched over $\cC(k)$
and a {\em dg functor} $F\colon\cA\to \cB$ is a functor enriched over
$\cC(k)$; consult Keller's ICM survey
\cite{ICM-Keller}. Recall from \S\ref{sec:intro} that $\dgcat(k)$ stands for the category of (essentially small) dg categories and dg functors.

Let $\cA$ be a dg category. The opposite dg category $\cA^\op$ has the
same objects and $\cA^\op(x,y):=\cA(y,x)$. The category $\dgHo(\cA)$ has the same objects as $\cA$ and morphisms $\dgHo(\cA)(x,y):=H^0\cA(x,y)$, where $H^0(-)$ stands for the $0^{\mathrm{th}}$-cohomology functor. A {\em right dg
  $\cA$-module} is a dg functor $M\colon \cA^\op \to \cC_\dg(k)$ with values
in the dg category $\cC_\dg(k)$ of complexes of $k$-vector spaces. Let
us write $\cC(\cA)$ for the category of right dg
$\cA$-modules. Following \cite[\S3.2]{ICM-Keller}, the derived
category $\cD(\cA)$ of $\cA$ is defined as the localization of
$\cC(\cA)$ with respect to the objectwise quasi-isomorphisms. Let
$\cD_c(\cA)$ be the triangulated subcategory of compact objects.

A dg functor $F\colon\cA\to \cB$ is called a {\em Morita equivalence} if the restriction functor along $F$
induces an equivalence on derived categories $\cD(\cB) \stackrel{\simeq}{\to}
\cD(\cA)$; see \cite[\S4.6]{ICM-Keller}. As explained in
\cite[\S1.6]{book}, the category $\dgcat(k)$ admits a Quillen model
structure whose weak equivalences are the Morita equivalences. Let us
denote by $\Hmo(k)$ the associated homotopy category.

The {\em tensor product $\cA\otimes\cB$} of dg categories is defined
as follows: the set of objects is the cartesian product of the sets of objects of $\cA$ and $\cB$ and
$(\cA\otimes\cB)((x,w),(y,z)):= \cA(x,y) \otimes \cB(w,z)$. As
explained in \cite[\S2.3]{ICM-Keller}, this construction gives rise to
a symmetric monoidal structure on $\dgcat(k)$, which descends to
$\Hmo(k)$. 

A {\em dg $\cA\text{-}\cB$-bimodule} is a dg functor
$\mathrm{B}\colon \cA\otimes \cB^\op \to \cC_\dg(k)$ or, equivalently, a
right dg $(\cA^\op \otimes \cB)$-module. A standard example is the dg
$\cA\text{-}\cB$-bimodule
\begin{eqnarray}\label{eq:bimodule2}
{}_F\mathrm{B}:\cA\otimes \cB^\op \too \cC_\dg(k) && (x,z) \mapsto \cB(z,F(x))
\end{eqnarray}
associated to a dg functor $F:\cA\to \cB$. Let us write $\rep(\cA,\cB)$ for the full triangulated subcategory of $\cD(\cA^\op \otimes \cB)$ consisting of those dg $\cA\text{-}\cB$-modules $\mathrm{B}$ such that for every object $x \in \cA$ the associated right dg $\cB$-module $\mathrm{B}(x,-)$ belongs to $\cD_c(\cB)$.

Following Kontsevich \cite{Miami,finMot,IAS}, a dg category $\cA$ is called {\em smooth} if the dg $\cA\text{-}\cA$-bimodule ${}_{\id}\mathrm{B}$ belongs to the triangulated category $\cD_c(\cA^\op\otimes \cA)$ and {\em proper} if $\sum_i \mathrm{dim}\, H^i\cA(x,y)< \infty$ for any ordered pair of objects $(x,y)$. 
\subsection{Additive invariants}
\label{sec:additive}
Recall from Bondal-Orlov \cite[Def.~2.4]{BO}\cite{BO1} that a {\em semi-orthogonal decomposition} of a triangulated category $\cT$, denoted by $\cT=\langle \cT_1, \cT_2\rangle$, consists of full triangulated subcategories $\cT_1, \cT_2 \subseteq \cT$ satisfying the following conditions: the inclusions $\cT_1, \cT_2 \subseteq \cT$ admit left and right adjoints; the triangulated category $\cT$ is generated by the objects of $\cT_1$ and $\cT_2$; and $\Hom_\cT(\cT_2, \cT_1)=0$. A functor $E\colon \dgcat(k)
\to \mathrm{D}$, with values in an idempotent complete additive category, is called an
{\em additive invariant} if it satisfies the following two conditions:
\begin{itemize}
\item[(i)] It sends the Morita equivalences (see \S\ref{sub:dg}) to isomorphisms.
\item[(ii)] Given dg categories $\cA,\cC \subseteq \cB$ such that $\dgHo(\cB)=\langle\dgHo(\cA), \dgHo(\cC) \rangle$, the inclusions $\cA, \cC\subseteq \cB$ induce an isomorphism $E(\cA) \oplus E(\cC) \simeq E(\cB)$.
\end{itemize}
\begin{example}[Algebraic $K$-theory]\label{ex:K-theory}
Algebraic $K$-theory gives rise to a (lax symmetric monoidal) additive invariant $K\colon \dgcat(k) \to \Ho(\Spt)$ with values in the homotopy category of spectra; see \cite[\S2.2.1]{book}. Classical variants such as mod-$l^\nu$ algebraic $K$-theory $K(-;\bbZ/l^\nu)$, Karoubi-Villamayor $K$-theory $KV$, nonconnective algebraic $K$-theory $\bbK$, homotopy $K$-theory $KH$, and \'etale $K$-theory $K^{\mathrm{et}}(-;\bbZ/l^\nu)$, also give rise to additive invariants; consult \cite[\S2.2.2-\S2.2.6]{book} for details.
\end{example}
\begin{example}[Cyclic homology]\label{ex:cyclic}
Cyclic homology gives rise to an additive invariant $H\!C\colon \dgcat(k) \to \cD(k)$ with values in the derived category of the base field $k$; see \cite[\S2.2.9]{book}. Classical variants such as Hochschild homology $H\!H$, periodic cyclic homology $H\!P$, and negative cyclic homology $H\!C^-$, also give rise to additive invariants; consult \cite[\S2.2.8-\S2.2.11]{book} for details.
\end{example}
\begin{example}[Mixed complex]\label{ex:mixed}
  A {\em mixed complex} is a (right) dg module over the algebra of
  dual numbers $\Lambda:=k[\epsilon]/\epsilon^2$ with
  $\mathrm{deg}(\epsilon)=-1$ and $d(\epsilon)=0$. Note that $\Lambda$ is a cocommutative graded dg-Hopf algebra with $\Delta(\epsilon)=\epsilon\otimes 1+1\otimes \epsilon$. Therefore, the tensor product $-\otimes_k-$ makes the derived category $\cD(\Lambda)$
into a symmetric monoidal category. The mixed complex gives rise to a (symmetric monoidal) additive invariant
  ${{\mathsf{C}}}\colon \dgcat(k) \to \cD(\Lambda)$. Moreover, all the additive
  invariants of Example \ref{ex:cyclic} factor through ${{\mathsf{C}}}$; consult
  \cite[\S2.2.7]{book} for details. 
\end{example}
\begin{example}[Topological Hochschild homology]\label{ex:TC}
Topological Hochschild homology gives rise to a (lax symmetric monoidal) additive invariant $TH\!H\colon \dgcat(k) \to \Ho(\Spt)$; see \cite[\S2.2.12]{book}. Variants such as topological cyclic homology $TC$ (see \cite[\S2.2.13]{book}) and periodic topological cyclic homology $TP$ (see \cite{TP}) also give rise to (lax symmetric monoidal) additive invariants.
\end{example}
\subsection{Change of coefficients}\label{sub:change}
Given a ring homomorphism $S\r R$ and a $S$-linear idempotent complete additive category 
$\mathrm{D}$, let $\mathrm{D}_R$ be the idempotent completion~of the category obtained by tensoring the Hom-sets of $\mathrm{D}$ with $R$. This procedure leads to a 
$2$-functor from the $2$-category
of $S$-linear idempotent complete additive categories to the $2$-category of $R$-linear idempotent complete additive categories. Moreover, we have an obvious ``change of coefficients'' functor $(-)_R\colon \mathrm{D} \to \mathrm{D}_R$. 
%
\subsection{Noncommutative motives}\label{sub:universal}
  Given dg categories $\cA$ and $\cB$, there is a
  natural bijection between $\Hom_{\Hmo(k)}(\cA,\cB)$ and the set of
  isomorphism classes of the category $\rep(\cA,\cB)$. Under this bijection, the composition law of $\Hmo(k)$ corresponds to the tensor product of bimodules. Therefore, since the dg $\cA\text{-}\cB$-bimodules
  \eqref{eq:bimodule2} belong to $\rep(\cA,\cB)$, we have the symmetric monoidal functor:
\begin{eqnarray}\label{eq:functor1}
\dgcat(k)\too \Hmo(k) & \cA \mapsto \cA & (\cA\stackrel{F}{\to} \cB) \mapsto {}_F \mathrm{B}\,.
\end{eqnarray}
The {\em additivization} of $\Hmo(k)$ is the additive category
$\Hmo_0(k)$ with the same objects as $\Hmo(k)$ and with abelian groups of morphisms $\Hom_{\Hmo_0(k)}(\cA,\cB)$ given by the Grothendieck group
$K_0\rep(\cA,\cB)$ of the triangulated category
$\rep(\cA,\cB)$. The category of {\em noncommutative motives $\NMot(k)$} is defined as the idempotent completion of $\Hmo_0(k)$ and the {\em universal additive invariant} 
$$\U(-):\dgcat(k)\too \NMot(k)
$$
as the composition of \eqref{eq:functor1} with the canonical functors $\Hmo(k)\to \Hmo_0(k)$ and $\Hmo_0(k)\to \NMot(k)$. Given a commutative ring of coefficients $R$, the category $\NMot(k)_R$ is defined as in \S\ref{sub:change}. 
As explained in
  \cite[\S2.3]{book}, given any $R$-linear 
  idempotent complete additive ((symmetric) monoidal)
  category $\mathrm{D}$, pre-composition with the functor $\U(-)_R$
  gives rise to induced equivalences of categories
\begin{eqnarray}
\Fun_{R\text{-}\mathrm{linear}}(\NMot(k)_R,\mathrm{D}) \stackrel{\simeq}{\too} \Fun_{\mathrm{add}}(\dgcat(k),\mathrm{D}) \label{eq:equivalence1} \\
\Fun^\otimes_{R\text{-}\mathrm{linear}}(\NMot(k)_R,\mathrm{D}) \stackrel{\simeq}{\too} \Fun^\otimes_{\mathrm{add}}(\dgcat(k),\mathrm{D})\label{eq:equivalence2}
\end{eqnarray}
where the left-hand side stands for the category of $R$-linear ((lax) 
(symmetric) monoidal) functors and the right-hand side for the category of
((lax) (symmetric) monoidal)~additive~invariants. For further information on noncommutative motives we invite the reader to consult the recent book \cite{book} and survey \cite{survey}.
\begin{remark}\label{rk:Homs}
Given dg categories $\cA$ and $\cB$, with $\cA$ smooth and proper, we have an equivalence $\rep(\cA,\cB) \simeq \cD_c(\cA^\op \otimes \cB)$. Consequently, we obtain isomorphisms
\begin{equation*}
\Hom_{\NMot(k)}(\U(\cA),\U(\cB)) := K_0(\rep(\cA,\cB))\simeq K_0(\cA^\op \otimes \cB)\,.
\end{equation*}
\end{remark}
\subsection{Perfect complexes of trivial $G$-actions}
Let $X$ be a quasi-compact quasi-separated $k$-scheme equipped with a trivial $G$-action. In this case, we have $[X/G]=X \times_\bullet BG$. This leads naturally to the following dg functor:
\begin{eqnarray}
\label{eq:trivial}
\perf_{\dg}(X) \otimes \perf_{\dg}(BG)\too \perf_{\dg}([X/G]) && (M, V)\mapsto M\boxtimes V\,.
\end{eqnarray}
\begin{proposition}\label{prop:generator}
The above dg functor \eqref{eq:trivial} is a Morita equivalence\footnote{The assumption $1/n\in k$ might be unnecessary for Proposition \ref{prop:generator}. Nevertheless, it is necessary to guaranty that the dg categories $\perf_\dg(BG)$ and $k[G]$ are Morita equivalent.}.
\end{proposition}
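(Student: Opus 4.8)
The plan is to prove that \eqref{eq:trivial} induces an equivalence on derived categories of dg modules, which is exactly what it means to be a Morita equivalence. I would use the standing hypothesis $1/n\in k$ in two places. First, as recorded in the footnote, it guarantees that $\perf_\dg(BG)$ is Morita equivalent to the one-object dg category $k[G]$ (this fails in characteristic dividing $n$, where perfect complexes on the stack $BG$ form a strictly larger category than $\perf(k[G])$). Second, it guarantees that $[X/G]$ is a \emph{perfect} algebraic stack, so that $\cD(\perf_\dg([X/G]))\simeq\mathrm{D}_{\mathrm{qc}}([X/G])$ with $\perf([X/G])$ the subcategory of compact objects (for $B(\bbZ/p)$ in characteristic $p$ this already fails). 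Both facts I would treat as inputs.

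Granting them, the argument is a chain of identifications. Using the Morita equivalence $\perf_\dg(BG)\simeq k[G]$ and the compatibility of $-\otimes-$ with Morita equivalences on $\Hmo(k)$, I would first replace \eqref{eq:trivial} by the dg functor $\perf_\dg(X)\otimes k[G]\to\perf_\dg([X/G])$, $M\mapsto M\boxtimes k[G]$, where $k[G]$ now denotes the regular representation. Passing to derived module categories, and invoking $\cD(\cA\otimes\cB)\simeq\cD(\cA)\otimes_k\cD(\cB)$ from derived Morita theory together with $\cD(\perf_\dg(X))\simeq\mathrm{D}_{\mathrm{qc}}(X)$ (valid since $X$ is qcqs), the source becomes $\mathrm{D}_{\mathrm{qc}}(X)\otimes_k\cD(k[G])$. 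On the target side, since the $G$-action on $X$ is trivial, $\mathrm{QCoh}([X/G])$ is the category of quasi-coherent modules over the algebra object $\cO_X\otimes_k k[G]$ of $\mathrm{D}_{\mathrm{qc}}(X)$, so $\mathrm{D}_{\mathrm{qc}}([X/G])\simeq\mathrm{Mod}_{\cO_X\otimes_k k[G]}(\mathrm{D}_{\mathrm{qc}}(X))\simeq\mathrm{D}_{\mathrm{qc}}(X)\otimes_k\cD(k[G])$. Chaining everything with $\cD(\perf_\dg([X/G]))\simeq\mathrm{D}_{\mathrm{qc}}([X/G])$ yields an equivalence $\cD(\perf_\dg(X)\otimes k[G])\simeq\cD(\perf_\dg([X/G]))$, and one checks on the representable generators — which map to $M\boxtimes k[G]=p_X^\ast M\otimes p_{BG}^\ast k[G]$ — that it is precisely the functor induced by \eqref{eq:trivial}. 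The lax monoidal refinement costs nothing extra, since every step is (symmetric) monoidal.

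The step I expect to require the most care is the identification $\mathrm{D}_{\mathrm{qc}}([X/G])\simeq\mathrm{D}_{\mathrm{qc}}(X)\otimes_k\cD(k[G])$ and the verification that the resulting equivalence is compatible with \eqref{eq:trivial} — in essence a Künneth / flat base-change statement for the product stack $[X/G]=X\times_\bullet BG$, for which flatness of $X$ over $k$ is what makes the base change unproblematic. If one prefers to avoid manipulating large derived categories, there is an equivalent route via the criterion that a dg functor into a category of the form $\perf_\dg(-)$ is a Morita equivalence once it is quasi-fully faithful and its image thickly generates the target: here quasi-full-faithfulness is the Künneth isomorphism $\RHom_{[X/G]}(M\boxtimes V,M'\boxtimes V')\simeq\RHom_X(M,M')\otimes_k\RHom_{BG}(V,V')$ — which uses that $BG$ is smooth and proper over $k$, i.e.\ $1/n\in k$ once more — while generation follows by taking $V=k[G]$ and letting $M$ run over a perfect generator of $\mathrm{D}_{\mathrm{qc}}(X)$. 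Either way the crux is the same Künneth-type computation, and the role of $1/n\in k$ is precisely to make $BG$ behave like a smooth proper dg category.
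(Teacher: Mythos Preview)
Your proposal is correct, and your second (``alternative'') route is essentially the paper's proof made concrete: the paper picks a single compact generator $\cG$ of $\cD_{\mathrm{Qch}}(X)$ via Bondal--Van den Bergh, observes that $\oplus_{g\in G}\cG=\cG\boxtimes k[G]$ compactly generates $\cD_{\mathrm{Qch}}([X/G])$ (invoking Hall--Rydh for compact $=$ perfect on the orbifold side, which is exactly your ``perfect stack'' input), and then records the quasi-isomorphism $\mathbf{R}\End_{[X/G]}(\oplus_{g\in G}\cG)\simeq \mathbf{R}\End_X(\cG)\otimes k[G]$---your K\"unneth identification specialized to this one generator. Your first route, via tensor products of large derived categories, is a valid abstract repackaging of the same content, but the paper stays entirely at the level of a single generator and its endomorphism dg algebra and thereby avoids that machinery.
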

\begin{proof}
Since $1/n \in k$, $\perf_\dg(BG)$ is Morita equivalent to $\perf_\dg(k[G])$ and, consequently, to $k[G]$. Therefore, it suffices to show that the induced functor
$$
\perf_\dg(X)\otimes k[G] \too \perf_\dg([X/G])
$$
is a Morita equivalence. Let us denote by $\cD_{{\operatorname{Qch}}}(X)$, resp. $\cD_{{\operatorname{Qch}}}([X/G])$, the full triangulated subcategory of $\cD(X)$, resp. $\cD([X/G])$, consisting of those complexes of $\cO_X$-modules, resp. $G$-equivariant complexes of $\cO_X$-modules, with quasi-coherent cohomology. Thanks to Neeman's
celebrated result \cite[Thm.~2.1]{Neeman}, the full subcategory of compact objects of $\cD_{{\operatorname{Qch}}}(X)$ agrees with $\perf(X)$. Under the assumption $1/n\in k$, a similar result holds for the (global) orbifold $[X/G]$; see \cite[Thm.~C]{HallRydh}. 
As proved in \cite[Thm.~3.1.1]{BV}, the triangulated category
  $\cD_{\operatorname{Qch}}(X)$ admits a compact generator
  $\cG$. Consequently, since $G$ acts trivially on $X$,
  $\oplus_{g \in G} \cG$ is a compact generator of $\mathcal{D}_{\operatorname{Qch}}([X/G])$. By passing to compact objects, we then obtain the following Morita equivalences
%
%
%
%
%
%
%
\begin{eqnarray*}
\perf_\dg([X/G]) \simeq \perf_\dg({\bf R}\End(\oplus_{g \in G}\cG)) && \perf_\dg(X) \simeq \perf_\dg({\bf R}\End(\cG))\,,
\end{eqnarray*}
where ${\bf R}\End(-)$ stands for the (derived) dg $k$-algebra of endomorphisms. The proof follows now from the canonical quasi-isomorphism of dg $k$-algebras between ${\bf R}\End(\oplus_{g \in G} \cG)$ and ${\bf R} \End(\cG) \otimes k[G]$ 
\end{proof}
\begin{corollary}\label{cor:monoids}
We have an isomorphism of monoids $\U(X) \otimes \U(BG) \simeq \U([X/G])$.
\end{corollary}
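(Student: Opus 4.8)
The plan is to upgrade the Morita equivalence of Proposition \ref{prop:generator} to an isomorphism of (commutative) monoids by exploiting the symmetric monoidality of the universal additive invariant $\U(-)$ recorded in \S\ref{sub:universal}. Recall first that for a quasi-compact quasi-separated $k$-scheme $X$ the derived tensor product $-\otimes_X-$ makes $\perf_\dg(X)$ into a commutative monoid in $\Hmo(k)$; the same applies to $\perf_\dg(BG)$ and, since $[X/G]=X\times_\bullet BG$, to $\perf_\dg([X/G])$. As $\U(-)\colon\dgcat(k)\to\NMot(k)$ is symmetric monoidal, it carries these commutative monoids to commutative monoids in $\NMot(k)$, which — by our notational convention — are precisely $\U(X)$, $\U(BG)$, and $\U([X/G])$. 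Moreover, the tensor product $\perf_\dg(X)\otimes\perf_\dg(BG)$ inherits the product monoid structure, and $\U$ identifies its image with $\U(X)\otimes\U(BG)$ as a commutative monoid via its monoidal constraint.

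Next I would observe that the dg functor \eqref{eq:trivial}, $(M,V)\mapsto M\boxtimes V$, is itself symmetric monoidal: the canonical isomorphisms $(M_1\boxtimes V_1)\otimes(M_2\boxtimes V_2)\simeq(M_1\otimes_X M_2)\boxtimes(V_1\otimes V_2)$ supply the monoidal constraints and are visibly compatible with the unit $\cO_X\boxtimes k\mapsto\cO_{[X/G]}$ and with the symmetry. Hence \eqref{eq:trivial} is a morphism of commutative monoids in $\dgcat(k)$. Applying the symmetric monoidal functor $\U(-)$, using that it sends Morita equivalences to isomorphisms (Proposition \ref{prop:generator}), and composing with the monoidal constraint of $\U$, one obtains a chain of isomorphisms of commutative monoids in $\NMot(k)$
\begin{equation*}
\U(X)\otimes\U(BG)\;\simeq\;\U(\perf_\dg(X)\otimes\perf_\dg(BG))\;\simeq\;\U([X/G])\,,
\end{equation*}
which is the desired statement.

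The only step needing genuine care — and the natural place for a slip — is the claim that \eqref{eq:trivial} is monoidal, i.e.\ that external product of complexes intertwines $(-\otimes_X-)\boxtimes(-\otimes-)$ with $-\otimes_{[X/G]}-$. This is a standard consequence of flat base change (equivalently, the projection formula) along the two projections out of $X\times_\bullet BG$, but one must carry the $G$-equivariant structures along throughout the comparison. Everything else is formal: the symmetric monoidality of $\U(-)$ from \cite[\S2.3]{book} together with Proposition \ref{prop:generator}.
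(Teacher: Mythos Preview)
Your proposal is correct and follows essentially the same approach as the paper: the paper's proof is the one-liner ``Combine Proposition \ref{prop:generator} with the fact that $\U$ is symmetric monoidal,'' and you have simply unpacked what this entails, including the (implicit) point that the dg functor \eqref{eq:trivial} is compatible with the monoidal structures. One small quibble: the commutative monoid structures on $\perf_\dg(X)$, $\perf_\dg(BG)$, $\perf_\dg([X/G])$ live in $\Hmo(k)$ rather than in $\dgcat(k)$ on the nose (cf.\ \cite{Schnurer}), so your phrase ``morphism of commutative monoids in $\dgcat(k)$'' should read ``in $\Hmo(k)$''; this does not affect the argument.
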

\begin{proof}
Combine Proposition \ref{prop:generator} with the fact that $\U$ is symmetric monoidal.
\end{proof}
\subsection{Galois descent for representation rings}
Let $l/k$ be a finite Galois field extension with Galois group $\Gamma$. Consider the induced homomorphism
\begin{equation}\label{eq:induced}
-\otimes_k l \colon R(G)_{1/n} \too R_l(G)_{1/n}^\Gamma\,,
\end{equation}
where $R_l(G)$ stands for the representation ring of $G$ over $l$.
\begin{proposition} \label{prop:galois}
The above homomorphism \eqref{eq:induced} is invertible.
\end{proposition}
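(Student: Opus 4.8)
The plan is to establish Galois descent for the representation ring by identifying both sides with concrete objects and reducing to a classical descent statement. First I would recall that $R(G)$ is a free $\bbZ$-module on the set of isomorphism classes of irreducible $k$-representations of $G$, while $R_l(G)$ is free on the irreducible $l$-representations; the Galois group $\Gamma$ acts on the latter set by transport of structure along $\operatorname{Gal}(l/k)$-conjugation, and $-\otimes_k l$ sends an irreducible $k$-representation to the sum of the $\Gamma$-orbit of $l$-irreducibles lying over it (possibly with multiplicity, the Schur index). The key point is that after inverting $n$ these Schur-index subtleties disappear, since $1/n\in k$ and (by a theorem of Brauer, using that $k$ contains enough roots of unity once we pass to $l$) the relevant obstruction classes in $\operatorname{Br}(k)$ are killed by $n$; alternatively one argues that $\bbQ\otimes R(G)$ behaves well and that the $1/n$-integral refinement is forced.

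Concretely, I would proceed in two steps. Step one: reduce the statement to the analogous statement with $\bbQ$-coefficients together with a torsion-freeness/integrality check. Over $\bbQ$, the functor $-\otimes_k l\colon R(G)_\bbQ\to (R_l(G)_\bbQ)^\Gamma$ is an isomorphism by classical character theory — both sides are computed by $\Gamma$-invariant class functions $G\to \overline{\bbQ}$ that are suitably "rational", and descent of class functions along $l/k$ is elementary. Step two: upgrade to $\bbZ[1/n]$-coefficients. Here one shows that $R_l(G)_{1/n}^\Gamma$, as a subring of $R_l(G)_\bbQ$, actually lands in the image of $R(G)_{1/n}$. This is where one uses the hypothesis $1/n\in k$: a $\Gamma$-invariant element of $R_l(G)_{1/n}$ corresponds to a virtual $l$-representation defined over $k$ up to a Schur index obstruction, and that obstruction is annihilated by $n$ (the exponent of $G$, or of the Schur multiplier, divides a power of $n$), so after inverting $n$ it vanishes.

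Alternatively — and this may be cleaner — I would phrase the whole argument via the decomposition of the group algebra. Since $1/n\in k$, Maschke's theorem gives $k[G]\simeq \prod_i M_{d_i}(D_i)$ with $D_i$ division algebras over their centers $k_i$, finite separable extensions of $k$; then $R(G)_{1/n}\simeq \prod_i \bbZ[1/n]$-ish data indexed by the $i$, more precisely $R(G)\simeq \bigoplus_i K_0(D_i)$. Tensoring with $l$ splits each $k_i\otimes_k l$ into a product of fields indexed by $\Gamma/\Gamma_i$, and $D_i\otimes_{k_i}(k_i\otimes_k l)$ becomes a product of matrix algebras over those fields after inverting $n$ (the Brauer class of $D_i$ has order dividing $n$). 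Taking $K_0$ and $\Gamma$-invariants then yields exactly $R(G)_{1/n}$ back. The main obstacle, in any approach, is the control of Schur indices / Brauer classes: one must verify that inverting $n$ genuinely kills the division-algebra part, i.e. that the local Brauer obstructions to descending an $l$-representation to $k$ have order dividing $n$. This follows from the fact that the Schur index of any irreducible representation of $G$ divides its dimension, hence divides $n$, but it is the step that requires care and is the crux of the proof.
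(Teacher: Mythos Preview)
Your second approach---decomposing $k[G]$ via Wedderburn into simple components $A_i$ with centers $l_i$, using that the Schur index of each $A_i$ divides $n$ to replace $K_0(A_i)_{1/n}$ by $K_0(l_i)_{1/n}$, and then checking Galois descent for the commutative \'etale algebra $l_i\otimes_k l$ directly---is exactly the paper's proof. The paper cites Curtis--Reiner for the fact that the index divides $n$; your route via ``Schur index divides the degree, which divides $|G|$'' gives the same conclusion.
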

\begin{proof} 
Since $1/n \in k$, the group algebra $k[G]$ is semi-simple. Hence, we have $k[G]\simeq \oplus_{i=1}^m A_i$ with $A_i$ a central simple algebra over its center $l_i$. As explained in\footnote{The result in {\em loc. cit.} is stated in characteristic $0$. However, it is well-known that if $p\nmid |G|$, then the blocks in characteristic $p$ are obtained by 
reduction from those in characteristic $0$.} \cite[Thm.~(33.7)]{CurtisReiner}, the index of $A_i$ divides $n$. Consequently, the functor $A_i \otimes_{l_i}-$ gives rise to isomorphisms $K_0(l_i)_{1/n}\simeq K_0(A_i)_{1/n}$ and $K_0(l_i\otimes_k l)_{1/n}\simeq K_0(A_i\otimes_k l)_{1/n}$. Therefore, it suffices to show that the homomorphism~$-\otimes_k l \colon K_0(l_i) \to K_0(l_i \otimes_k~l)^\Gamma$ is invertible. Note that $l_i \otimes_k l$ decomposes into a direct sum of copies of a field. Moreover, $\Gamma$ permutes these copies. This implies that $K_0(l_i \otimes_k l)\simeq \bbZ\{S\}$, where $S$ is a set on which $\Gamma$ acts transitively. The proof follows now from the fact that the diagonal inclusion $\bbZ \to \bbZ\{S\}$ induces an isomorphism $\bbZ \simeq (\bbZ\{S\})^\Gamma$.
\end{proof}
\subsection{Galois descent for topological Hochschild homology}\label{sub:THH}
As mentioned in Example \ref{ex:TC}, topological Hochschild homology $THH$ is a lax symmetric monoidal additive invariant. Since the ``inclusion of the $0^{\mathrm{th}}$ skeleton'' yields a ring isomorphism $k \stackrel{\sim}{\to} THH_0(k)$, we observe that $THH_\ast(-)$ can be promoted to a lax symmetric monoidal additive invariant with values in $\bbZ$-graded $k$-vector spaces.

Let $l/k$ be a finite Galois field extension with Galois group $\Gamma$. Given a dg category $\cA$, consider the induced homomorphism of $\bbZ$-graded $k$-vector spaces
\begin{equation}\label{eq:induced-last}
-\otimes_k l \colon THH_\ast(\cA) \too THH_\ast(\cA\otimes_k l)^\Gamma\,.
\end{equation}
\begin{proposition}\label{prop:THH}
The above homomorphism \eqref{eq:induced-last} is invertible.
\end{proposition}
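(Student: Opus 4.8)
The plan is to prove that $THH_\ast(\cA) \to THH_\ast(\cA \otimes_k l)^\Gamma$ is invertible by reducing, via the monoidal formalism, to the base case where $\cA = k$ and then exploiting faithfully flat (Galois) descent together with the K\"unneth-type behaviour of $THH$. Concretely, since $THH$ is lax symmetric monoidal and $\cA \otimes_k l \simeq \cA \otimes_k (k \otimes_k l)$, the natural map in \eqref{eq:induced-last} is obtained from the base-change map $THH(\cA) \otimes_k l \to THH(\cA \otimes_k l)$ composed with the canonical map into $\Gamma$-invariants. First I would recall that for a field extension $l/k$ one has a base-change isomorphism $THH_\ast(\cA \otimes_k l) \simeq THH_\ast(\cA) \otimes_k l$; this follows because $THH$ of a dg category over $k$ is computed as a (cyclic) bar-type complex which is built out of $k$-linear tensor products, and $-\otimes_k l$ is exact and commutes with all the relevant colimits (equivalently, $l$ is flat over $k$, so $THH(\cA \otimes_k l) \simeq THH_k(\cA) \otimes_k l$ — here one can cite the standard compatibility of $THH$ with flat base change, or deduce it from $THH(\cA \otimes_k B) \simeq THH(\cA) \otimes_k THH_k(B)$ applied to $B = l$ using that $THH_\ast(l) = l$ in degree $0$ when $l/k$ is separable, which holds as $\mathrm{char}(k) = p$ and $l/k$ is finite \'etale).

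Second, with the identification $THH_\ast(\cA \otimes_k l) \simeq THH_\ast(\cA) \otimes_k l$ in hand, the map \eqref{eq:induced-last} becomes, degreewise, the canonical map
\begin{equation*}
V \too (V \otimes_k l)^\Gamma
\end{equation*}
for the $k$-vector space $V := THH_i(\cA)$, where $\Gamma$ acts through the second factor. So the statement is reduced to the purely linear-algebra fact that for a finite Galois extension $l/k$ with group $\Gamma$ and any $k$-vector space $V$, the natural map $V \to (V \otimes_k l)^\Gamma$ is an isomorphism. This is immediate: choosing a $k$-basis of $V$ reduces it to the case $V = k$, which is the defining property $k = l^\Gamma$ of a Galois extension, and taking $\Gamma$-invariants commutes with the (possibly infinite) direct sum since $\Gamma$ is finite and acts on each summand separately. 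Third, I would note that these identifications are compatible with the multiplicative structure (both the unit map and the base-change map are maps of graded $k$-algebras), so if one wants the invertibility as an isomorphism of graded $k$-algebras — which is what is implicitly needed downstream — it comes for free.

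The main obstacle, and the only genuinely nontrivial input, is the base-change isomorphism $THH_\ast(\cA \otimes_k l) \simeq THH_\ast(\cA) \otimes_k l$: one must check that $THH$ (a priori a topological, hence $\bbS$-linear, invariant rather than a $k$-linear one) really is compatible with extension of the ground \emph{field} in this naive way. The cleanest route is to use the symmetric monoidality of $THH$ together with the computation of $THH_\ast(l)$ over $k$: since $l/k$ is finite separable, $l$ is a smooth proper commutative $k$-algebra with $\Omega^1_{l/k} = 0$, so $HH_\ast(l/k)$ is concentrated in degree $0$ (equal to $l$), and the same vanishing forces $THH_\ast(l)$ over $k$ to be $l$ concentrated in degree $0$ (e.g.\ via the B\"okstedt-type spectral sequence, or because $THH(l/k) \simeq l \otimes_{l \otimes_k l} l \simeq l$ as $l \otimes_k l$ is \'etale over $l$ hence $l$ is a flat — indeed projective — $l\otimes_k l$-module realizing it as a retract, making the derived and underived tensor products agree). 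Granting $THH_\ast(l) = l$, the lax monoidal structure map $THH(\cA) \otimes_k THH(l) \to THH(\cA \otimes_k l)$ is an equivalence because one of the factors is, in each degree, a finite-dimensional $k$-vector space (indeed $l$ sitting in degree $0$), so no derived-tensor or $\mathrm{Tor}$ corrections appear; alternatively one appeals directly to the flat base-change property of $THH$ for the flat map $k \to l$. Once that is secured, the remainder of the argument is the elementary descent statement above.
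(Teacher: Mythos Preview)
Your overall strategy --- reduce to a base-change isomorphism $THH_\ast(\cA\otimes_k l)\simeq THH_\ast(\cA)\otimes_k l$, then invoke the elementary fact $V\simeq (V\otimes_k l)^\Gamma$ --- is a legitimate route, and the conclusion you are after is in fact correct. But your justification of the base-change step contains a genuine error. You argue that $THH_\ast(l)=l$ concentrated in degree $0$, computing ``$THH(l/k)\simeq l\otimes_{l\otimes_k l} l$''. That computation is $HH_\ast(l/k)$, not topological Hochschild homology: $THH$ in this paper is absolute (over the sphere spectrum), and $THH_\ast(l)$ is \emph{not} concentrated in degree $0$ when $\mathrm{char}(k)=p>0$. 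For instance B\"okstedt's calculation gives $THH_\ast(\bbF_p)=\bbF_p[\sigma]$ with $|\sigma|=2$, and by \'etale base-change $THH_\ast(\bbF_{p^r})=\bbF_{p^r}[\sigma]$. So the sentence ``the same vanishing forces $THH_\ast(l)$ over $k$ to be $l$ concentrated in degree $0$'' is false, and with it the argument that the lax monoidal comparison map is an isomorphism ``because one factor sits in degree $0$'' collapses. Note also that the lax monoidal structure on $THH$ as a functor to spectra is over $\bbS$, not over $k$; passing to $\pi_\ast$ and then tensoring over $k$ introduces a further K\"unneth issue you do not address.

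The base-change isomorphism $THH(\cA\otimes_k l)\simeq THH(\cA)\otimes_k l$ \emph{does} hold, but for a different reason: one needs the \'etale base-change theorem for $THH$ (namely $THH(l)\simeq l\otimes_k THH(k)$ as $THH(k)$-modules, in the spirit of Weibel--Geller for $HH$ and its topological extensions), together with a K\"unneth/multiplicativity statement identifying $THH(\cA\otimes_k l)$ with $THH(\cA)\otimes_{THH(k)}THH(l)$. Both are nontrivial inputs, arguably heavier than the proposition itself, and neither is established by what you wrote. The throwaway ``alternatively one appeals directly to the flat base-change property of $THH$'' is not a proof: flatness alone is not enough, and even for \'etale maps this is a theorem, not a tautology.

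By contrast, the paper's argument avoids the base-change question entirely. It builds an explicit one-sided inverse by constructing a restriction morphism $\mathrm{res}\colon THH_\ast(\cA\otimes_k l)\to THH_\ast(\cA)$ from the evident $(\cA\otimes_k l)$-$\cA$-bimodule, picks $\lambda\in l$ with $\mathrm{tr}(\lambda)=1$, and checks via the projection formula and the identity $(-)\otimes_k l\circ\mathrm{res}=\sum_{\gamma\in\Gamma}\gamma$ that $\beta\mapsto \mathrm{res}(\lambda\cdot\beta)$ inverts \eqref{eq:induced-last}. This transfer/trace argument is entirely self-contained and uses only the additive-invariant formalism already set up in the paper; your approach, once repaired, would outsource the work to a substantial external theorem.
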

\begin{proof}
Note first that by definition of topological Hochschild homology, the following two procedures lead to the same $\bbZ$-graded $k$-vector spaces:
\begin{itemize}
\item[(i)] First apply the functor $THH_\ast(-)$ to the dg $l$-linear category $\cA\otimes_k l$, and then consider $THH_\ast(\cA\otimes_k l)$ as a $\bbZ$-graded $k$-vector space.
\item[(ii)] First consider $\cA\otimes_kl$ as a dg $k$-linear category, and then apply $THH_\ast(-)$.
\end{itemize}
Consider $\cA\otimes_k l$ as a dg $k$-linear category. Consider also the dg $(\cA\otimes_k l)\text{-}\cA$-bimodule:
\begin{eqnarray}\label{eq:bimodule}
(\cA\otimes_k l) \otimes \cA^\op \too \cC_\dg(k) && (z,x) \mapsto \cA\otimes_k l ((-\otimes_k l)(x),z)\,. 
\end{eqnarray}
Since the field extension $l/k$ is finite, \eqref{eq:bimodule} belongs to the category $\rep(\cA\otimes_k l, \cA)$. Consequently, \eqref{eq:bimodule} corresponds to a morphism $\cA\otimes_k l \to \cA$ in the homotopy category $\Hmo(k)$. Using the equivalence of categories \eqref{eq:equivalence1}, we then obtain an induced homomorphism of $\bbZ$-graded $k$-vector spaces $\mathrm{res}\colon THH_\ast(\cA\otimes_k l) \to THH_\ast(\cA)$. In the particular case where $\cA=k$, the homomorphism $\mathrm{res}\colon THH_0(l) \to THH_0(k)$ agrees with the field trace homomorphism $\mathrm{tr}\colon l \to k$. Note that $\mathrm{tr}$ is surjective.

We now have all the ingredients necessary to conclude the proof. Choose an element $\lambda\in l$ such that $\mathrm{tr}(\lambda)=1$, and consider the following composition:
\begin{equation}\label{eq:composed}
THH_\ast(\cA\otimes_k l)^\Gamma \subset THH_\ast(\cA\otimes_k l) \stackrel{\lambda \cdot -}{\too} THH_\ast(\cA\otimes_k l) \stackrel{\mathrm{res}}{\too} THH_\ast(\cA)\,.
\end{equation}
We claim that the homomorphisms \eqref{eq:induced-last} and \eqref{eq:composed} are inverse to each other. On the one hand, given $\alpha \in THH_\ast(\cA)$, we have the following equalities
$$ \mathrm{res}(\lambda \cdot (\alpha \otimes_k l)) \stackrel{(a)}{=} \mathrm{tr}(\lambda) \cdot \alpha = 1 \cdot \alpha = \alpha\,,$$
where $(a)$ follows from the projection formula. On the other hand, given $\beta \in THH_\ast(\cA\otimes_k l)^\Gamma$, we have the following equalities
$$ \mathrm{res}(\lambda \cdot \beta) \otimes_k l \stackrel{(b)}{=} \sum_{\gamma \in \Gamma} \gamma(\lambda \cdot \beta) = \sum_{\gamma \in \Gamma} \gamma^{-1}(\lambda) \cdot \gamma(\beta)  =  (\sum_{\gamma \in \Gamma} \gamma^{-1}(\lambda)) \cdot \beta \stackrel{(c)}{=} (\mathrm{tr}(\lambda)\otimes_k l)\cdot \beta =\beta\,,$$
where $(b)$ follows from \cite[Prop.~5.5]{Descent} and $(c)$ from the fact that $l/k$ is a finite Galois field extension. This finishes the proof.
\end{proof}
\section{$R(G)$-action on additive invariants}\label{sec:action}
We start with some generalities. Let $(\mathrm{D}, \otimes, {\bf 1})$ be a $\bbZ$-linear monoidal category. Given an object $\cO\in \mathrm{D}$, consider the associated abelian group $\Hom({\bf 1},\cO)$.
\begin{itemize}
\item[(i)] If $\cO$ is a monoid in $\mathrm{D}$, then $\Hom({\bf 1}, \cO)$ is a ring. Similarly, if $\mathrm{D}$ is a symmetric monoidal category and $\cO$ is a commutative monoid, then $\Hom({\bf 1},\cO)$ is also a commutative monoid.
\item[(ii)] If $\cO$ is a monoid in $\mathrm{D}$, then we have an induced ring homomorphism
\begin{eqnarray*}
\Hom({\bf 1},\cO) \too \Hom(\cO,\cO) && f \mapsto \phi(f):=m \circ (f \otimes \id_\cO)\,,
\end{eqnarray*}
where $m\colon \cO \otimes \cO \to \cO$ stands for the multiplication map. In other words, the ring $\Hom({\bf 1},\cO)$ acts on the object $\cO$.
\item[(iii)] If $\mathrm{D}$ is a symmetric monoidal category and $\cO$ is a commutative monoid, then the monoid structure of $\cO$ is $\Hom({\bf 1},\cO)$-linear. In other words, the equality $\phi(fg)\circ m = m \circ (\phi(g) \otimes \phi(g))$ holds for every $f, g \in \Hom({\bf 1},\cO)$.
%
%
\end{itemize}
Let us now consider the case where $\mathrm{D}=\NMot(k)$. Given a quasi-compact quasi-separated $k$-scheme $X$ (or, more generally, a suitable algebraic stack $\cX$), the tensor product $-\otimes_X-$ makes $\perf_\dg(X)$ into a commutative monoid in $\dgcat(k)$. Since the universal additive invariant is symmetric monoidal, we obtain a commutative monoid $\U(X)$ in $\NMot(k)$. The above general considerations, combined with the following isomorphism (see Remark \ref{rk:Homs})
$$ \Hom_{\NMot(k)}(\U(k),\U(X))\simeq K_0(X)\,,$$
allow us then to conclude that the Grothendieck ring $K_0(X)$ acts on $\U(X)$. Moreover, the monoid structure of $\U(X)$ is $K_0(X)$-linear. Note that under the notations of \S\ref{sub:dg}, the $K_0(X)$-action on $\U(X)$ may be explicitly described as follows:
\begin{eqnarray*}
K_0(\perf(X))\too K_0(\rep(\perf_{\dg}(X),\perf_{\dg}(X))) && [M]\mapsto [{}_{(M\otimes_X -)} \mathrm{B}]\,.
\end{eqnarray*}
Given an additive invariant $E$, the equivalence of categories \eqref{eq:equivalence1} implies, by functoriality, that the Grothendieck ring $K_0(X)$ acts on $E(X)$. If $E$ is moreover symmetric monoidal, then the monoid structure of $E(X)$ is $K_0(X)$-linear.

Finally, let us consider the case where $\cX=[X/G]$. In this case, the morphism $[X/G]\to BG$ induces a ring homomorphism $R(G) \to K_0([X/G])$. By combining it with the above considerations, we obtain a canonical $R(G)$-action on $E([X/G])$ for every additive invariant $E$, which may be explicitly described~as~follows:
\begin{eqnarray*}
R(G) \too \End_\mathrm{D}(E([X/G])) && [V]\mapsto (E([X/G]) \stackrel{V\otimes_k -}{\too} E([X/G]))\,.
\end{eqnarray*}
\section{Primitive part of $R(\sigma)$}
In this section, given $\sigma \in \varphi$, we introduce the primitive part of the representation ring $R(\sigma)$. We start with some preliminaries on cycle group rings.

\subsubsection{Cyclic group rings}
Let $\Sigma$ be a commutative ring and $\rho$ a cyclic group of~order $m$. Assume that $1/m \in \Sigma$. The choice of a generator $t \in \rho$ leads to a decomposition 
$$
\Sigma[\rho]\simeq \Sigma[t]/(t^m-1)\simeq \bigoplus_{j\mid m} \Sigma[t]/(\Phi_j(t))\,,
$$
where $\Phi_j(t)$ stands for the cyclotomic polynomial corresponding to the primitive $j^{\mathrm{th}}$ roots of unity. Consider the idempotent $e_{\text{prim}}:=\prod_{\rho'\subsetneq  \rho}(1-e_{\rho'})$, with $e_{\rho'}:=(\sum_{h \in \rho'} h)/|\rho'|$, where the product runs over all minimal non-trivial subgroups $\rho'$ of $\rho$. Note that $e_{\text{prim}}$, being a product of idempotents is, indeed, an idempotent. Moreover, it is mapped to $\delta_{j,m}$ under the projection $\Sigma[\rho]\to \Sigma[t]/(\Phi_j(t))$. The {\em primitive part} of $\Sigma[\rho]$ is defined as follows: 
%
\begin{equation}
\label{eq:primdef}
\Sigma[\rho]_{\text{prim}}:=e_{\text{prim}}\Sigma[\rho]\simeq\Sigma[t]/(\Phi_m(t))\,.
\end{equation}
\begin{remark} \label{rk:char}
If $u:\rho\to \varrho$ is a non-injective group homomorphism, then $u(e_{\text{prim}})=0$. Indeed, since $\mathrm{Ker}(u)$ contains a minimal subgroup $\rho'$, we have $u(1-e_{\rho'})=0$. The idempotent $e_{\text{prim}}$ is maximal with respect to this property.
\end{remark}
The following result follows easily from the above definitions/considerations:
\begin{lemma} \label{lem:perm}
Every automorphism of $\Sigma[\rho]$ that permutes the elements of $\rho$ leaves $e_{\operatorname{prim}}$, and consequently $\Sigma[\rho]_{\operatorname{prim}}$, invariant.
\end{lemma}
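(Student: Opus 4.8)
The plan is to show that any automorphism $\alpha$ of $\Sigma[\rho]$ permuting the elements of $\rho$ fixes the idempotent $e_{\text{prim}}$, and then observe that this forces $\alpha$ to preserve the summand $\Sigma[\rho]_{\text{prim}} = e_{\text{prim}}\Sigma[\rho]$. For the second part, once $\alpha(e_{\text{prim}}) = e_{\text{prim}}$, any element $x = e_{\text{prim}} y$ of the primitive part satisfies $\alpha(x) = \alpha(e_{\text{prim}})\alpha(y) = e_{\text{prim}}\alpha(y) \in e_{\text{prim}}\Sigma[\rho]$, so the summand is invariant; this step is immediate.

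So the content is in showing $\alpha(e_{\text{prim}}) = e_{\text{prim}}$. First I would unwind the definition $e_{\text{prim}} = \prod_{\rho' \subsetneq \rho} (1 - e_{\rho'})$, where the product is over the (finitely many, in fact the unique, since $\rho$ is cyclic) minimal non-trivial subgroups $\rho'$, and $e_{\rho'} = \tfrac{1}{|\rho'|}\sum_{h \in \rho'} h$. Since $\alpha$ is a ring automorphism, it commutes with sums and products, so it suffices to check that $\alpha$ permutes the set of idempotents $\{e_{\rho'} : \rho' \text{ minimal non-trivial}\}$, or more simply that $\alpha(e_{\rho'}) = e_{\rho'}$ for each such $\rho'$ (the minimal non-trivial subgroup of a cyclic group is unique, so there is really only one factor, but the argument works the same way in general). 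Now $\alpha$ restricted to $\rho$ is a permutation $s$ of the underlying set, so $\alpha(e_{\rho'}) = \tfrac{1}{|\rho'|}\sum_{h \in \rho'} s(h)$; I need to argue $s(\rho') = \rho'$ as sets.

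The key point here — and the main (mild) obstacle — is that a permutation of $\rho$ extending to a ring automorphism of $\Sigma[\rho]$ must carry subgroups to subgroups. One clean way: such an automorphism must send the identity element $1 \in \rho$ to itself (it is the unique multiplicative identity of the ring, hence fixed by any ring automorphism), and it must respect the multiplication of group elements up to the condition that products of basis elements are again basis elements; more precisely, since $\alpha(gh) = \alpha(g)\alpha(h)$ and both sides are elements of $\rho$ (as $\alpha$ permutes $\rho$), the map $s\colon \rho \to \rho$ is a group automorphism of $\rho$. A group automorphism of $\rho$ preserves every characteristic subgroup, in particular every subgroup of $\rho$ since $\rho$ is cyclic (all subgroups of a cyclic group are characteristic). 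Hence $s(\rho') = \rho'$, which gives $\alpha(e_{\rho'}) = e_{\rho'}$ and therefore $\alpha(e_{\text{prim}}) = e_{\text{prim}}$, completing the proof. Alternatively, if one prefers not to invoke that $s$ is a group automorphism, one can characterize $e_{\rho'}$ intrinsically — e.g.\ $e_{\text{prim}}$ is, by Remark~\ref{rk:char}, the maximal idempotent killed by every non-injective homomorphism out of $\rho$, a description manifestly invariant under any automorphism permuting $\rho$ — and conclude directly.
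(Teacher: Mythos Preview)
Your argument is correct and in line with the paper's treatment (which simply asserts the lemma ``follows easily from the above definitions/considerations'' without further detail): you observe that a ring automorphism of $\Sigma[\rho]$ permuting $\rho$ must restrict to a group automorphism of $\rho$, hence fixes every subgroup of the cyclic group $\rho$, and therefore fixes each $e_{\rho'}$ and their product $e_{\text{prim}}$.

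One factual slip to correct: the minimal non-trivial subgroup of a cyclic group is \emph{not} unique in general---a cyclic group of order $m$ has one minimal non-trivial subgroup for each prime dividing $m$ (e.g.\ $\bbZ/6\bbZ$ has minimal subgroups of orders $2$ and $3$). This does not affect your proof, since as you yourself note the argument goes through regardless: a group automorphism of $\rho$ fixes \emph{each} subgroup (all subgroups of a cyclic group being characteristic), so it fixes each $e_{\rho'}$ individually and hence permutes the factors of $e_{\text{prim}}$ trivially. Your alternative route via Remark~\ref{rk:char} also works, but note that it is not quite ``manifest'' without first knowing that $\alpha|_\rho$ is a group automorphism: one needs this to see that precomposing a non-injective group homomorphism $u\colon \rho \to \varrho$ with $\alpha|_\rho$ again yields a non-injective group homomorphism, so the two routes rely on the same key observation.
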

\subsubsection{Representation rings of cyclic groups}
\label{sec:prim}
Let $l$ be the field obtained from $k$ by adjoining the $n^{\mathrm{th}}$ roots of unity, and $\Gamma:=\mathrm{Gal}(l/k)$ the associated Galois group. Given a cyclic subgroup $\sigma \in \varphi$, character theory provides an isomorphism $R_l(\sigma)\simeq \bbZ[\sigma^\vee]$, where $\sigma^\vee:=\Hom(\sigma,l^\times)$ stands for the dual cyclic group. Moreover, the $\Gamma$-action on
$\bbZ[\sigma^\vee]$ permutes the elements of $\sigma^\vee$. Therefore, thanks to the above Lemma \ref{lem:perm} (with $\rho=\sigma$ and $\Sigma=\bbZ[1/n]$), the canonical idempotent $e_{\text{prim}}\in  \bbZ[1/n][\sigma^\vee]$ is $\Gamma$-invariant. Using Proposition \ref{prop:galois}, we can then consider $e_{\text{prim}}$ as an element of the representation ring $R(\sigma)_{1/n}\simeq R_l(\sigma)^\Gamma_{1/n}$.
\begin{definition}[Primitive part]\label{def:primitive}
Let $e_\sigma \in R(\sigma)_{1/n}$ be the idempotent corresponding to $e_{\text{prim}} \in \bbZ[1/n][\sigma^\vee]^\Gamma$ under the character isomorphism $R(\sigma)_{1/n} \simeq \bbZ[1/n][\sigma^\vee]^\Gamma$. The {\em primitive part}\footnote{There is no obvious relation between the rings $\widetilde{R}(\sigma)_{1/n}$ and 
$K_0(k[\sigma]_{\text{prim}})$. Indeed, the ring $K_0(k[\sigma]_{\text{prim}})_{1/n}$ is {\em not} a direct summand of
the ring $R(\sigma)_{1/n}$.} $\widetilde{R}(\sigma)_{1/n}$ of $R(\sigma)_{1/n}$ is defined as the direct summand $e_\sigma R(\sigma)_{1/n}$.
\end{definition}
\begin{remark}\label{rk:action}
Thanks to the above Lemma \ref{lem:perm}, the group $\operatorname{Aut}(\sigma)$ of automorphisms of $\sigma$ acts on the primitive part $\widetilde{R}(\sigma)_{1/n}$.
\end{remark}
\begin{lemma} \label{lem:restriction} The idempotent $e_\sigma$ belongs to the kernel of the restriction homomorphism $R(\sigma)_{1/n}\r R(\sigma')_{1/n}$ for every proper subgroup $\sigma'\subsetneq \sigma$.
\end{lemma}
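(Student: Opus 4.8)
The plan is to reduce the assertion to a computation in a group ring over the splitting field $l$ (the field obtained from $k$ by adjoining the $n^{\mathrm{th}}$ roots of unity, with $\Gamma=\mathrm{Gal}(l/k)$), and then to descend to $k$ via Proposition~\ref{prop:galois}.

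First I would record that restriction commutes with the base change $-\otimes_k l$: for a subgroup $\sigma'\subseteq\sigma$ one has $(V|_{\sigma'})\otimes_k l=(V\otimes_k l)|_{\sigma'}$ for every $k$-representation $V$ of $\sigma$, so the square with edges $\Res\colon R(\sigma)_{1/n}\to R(\sigma')_{1/n}$, $\Res\colon R_l(\sigma)_{1/n}\to R_l(\sigma')_{1/n}$ and the two maps $-\otimes_k l$ is commutative. By Proposition~\ref{prop:galois} the vertical map $R(\sigma')_{1/n}\to R_l(\sigma')_{1/n}$ identifies its source with the $\Gamma$-invariants of its target, and is in particular injective. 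Moreover, by Definition~\ref{def:primitive}, the image of $e_\sigma$ under $R(\sigma)_{1/n}\to R_l(\sigma)_{1/n}\simeq\bbZ[1/n][\sigma^\vee]$ is precisely $e_{\text{prim}}$. Hence it suffices to prove that $\Res(e_{\text{prim}})=0$ in $R_l(\sigma')_{1/n}\simeq\bbZ[1/n][(\sigma')^\vee]$.

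Next I would identify the restriction map on the character side. Since $l$ contains the $n^{\mathrm{th}}$ roots of unity and $\sigma$ is cyclic, every $l$-representation of $\sigma$ (resp.\ of $\sigma'$) is a sum of characters, and under the character isomorphisms $R_l(\sigma)\simeq\bbZ[\sigma^\vee]$ and $R_l(\sigma')\simeq\bbZ[(\sigma')^\vee]$ the restriction functor $\Res^\sigma_{\sigma'}$ corresponds to the ring homomorphism $\bbZ[r]$ induced by the group homomorphism $r\colon\sigma^\vee\to(\sigma')^\vee$, $\chi\mapsto\chi|_{\sigma'}$, dual to the inclusion $\sigma'\hookrightarrow\sigma$. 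When $\sigma'\subsetneq\sigma$ is proper, the kernel of $r$ contains the group $\Hom(\sigma/\sigma',l^\times)$ of characters of $\sigma$ trivial on $\sigma'$, which is nontrivial since $\sigma/\sigma'$ is a nontrivial cyclic group whose order divides $n$. Thus $r$ is non-injective, and Remark~\ref{rk:char} (with $\rho=\sigma^\vee$ and $u=r$) gives $\bbZ[r](e_{\text{prim}})=0$. Combined with the first paragraph, this yields $\Res(e_\sigma)=0$ in $R(\sigma')_{1/n}$.

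I do not anticipate a genuine obstacle here; the two points that demand a little care are the precise matching of $\Res$ with the dual group homomorphism on the character side, and the injectivity used for the descent from $l$ to $k$, both of which are handled above.
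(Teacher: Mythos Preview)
Your proof is correct and follows essentially the same approach as the paper's: reduce via Galois descent (Proposition~\ref{prop:galois}) to the case where $k$ contains the $n^{\mathrm{th}}$ roots of unity, then invoke Remark~\ref{rk:char} applied to the non-injective dual map $\sigma^\vee\to(\sigma')^\vee$. The paper compresses this into two sentences, while you have spelled out the commutativity of restriction with base change and the identification of $\Res$ on the character side, but the substance is the same.
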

\begin{proof} We may assume without loss of generality that $k$ contains the $n^{\mathrm{th}}$ roots of unity. Therefore, the proof follows from the above Remark \ref{rk:char}.
\end{proof}
\begin{proposition}[Computation] 
\label{prop:concrete}
Assume that $k$ contains the $n^{\mathrm{th}}$ roots of unity. Let $l$ be another field (not necessarily of characteristic $p$) which contains the $n^{\mathrm{th}}$ roots of unity and $1/n\in l$. Choose an isomorphism $\epsilon$ between the $n^{\mathrm{th}}$ roots of unity in $k$ and in $l$ (e.g.\ the identity if $k=l$). Under these assumptions, we have the following commutative diagram of $l$-algebras (compatible with the $\operatorname{Aut}(\sigma)$-action):
\begin{equation}
\label{eq:character}
\xymatrix{
\widetilde{R}(\sigma)_l\ar[r]^-{\simeq}\ar@{^(->}[d]& 
\mathrm{Map}(\mathrm{gen}(\sigma),l)\ar@{^(->}[d]\\ 
R(\sigma)_l\ar[r]_-{\simeq}& \mathrm{Map}(\sigma,l) \,.
}
\end{equation}
Some explanations are in order: $\mathrm{gen}(\sigma)$ stands for the set of generators of $\sigma$; $\mathrm{Map}(\mathrm{gen}(\sigma),l)$ stands for the set of functions from $\mathrm{gen}(\sigma)$ to $l$; the lower arrow in \eqref{eq:character} sends an irreducible $\sigma$-representation $V$ to the composition of its character $\chi_V$ with $\epsilon$; and the right vertical arrow in \eqref{eq:character} identifies a function on $\mathrm{gen}(\sigma)$ to the function on $\sigma$ that is zero on the complement of $\mathrm{gen}(\sigma)$. 
\end{proposition}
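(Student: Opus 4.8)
The plan is to unwind all the identifications in the statement and check commutativity on irreducible characters, where everything becomes concrete. First I would fix a generator $t \in \sigma$, say of order $m$, and use the character isomorphism $R(\sigma)_l \simeq \mathrm{Map}(\sigma, l)$ that sends a class $[V]$ to the function $g \mapsto \epsilon(\chi_V(g))$; here I am using that $k$ (hence $l$ via $\epsilon$) contains the $m^{\mathrm{th}}$ roots of unity, so that all irreducible $\sigma$-representations are one-dimensional and their characters take values in $\mu_m \subseteq k^\times$. Under this isomorphism the standard basis of $R(\sigma)$ given by the irreducibles $V_0, \dots, V_{m-1}$ (with $V_j(t)$ acting by $\zeta^j$ for a fixed primitive $m^{\mathrm{th}}$ root of unity $\zeta$) maps to the "evaluation" functions, and after $l$-linearization this becomes an isomorphism of $l$-algebras onto the full function algebra $\mathrm{Map}(\sigma, l) \cong l^{\sigma}$, by the usual Vandermonde/Fourier argument.

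Next I would identify the image of the primitive idempotent $e_\sigma$. By Definition \ref{def:primitive}, $e_\sigma$ corresponds to $e_{\text{prim}} \in \bbZ[1/n][\sigma^\vee]$, and by the computation in \eqref{eq:primdef} together with Remark \ref{rk:char}, $e_{\text{prim}}$ is characterized as the element mapping to $1$ under the projection onto the $\Phi_m$-component and to $0$ under all restrictions to proper subgroups; equivalently (Lemma \ref{lem:restriction}), $e_\sigma$ is the idempotent whose associated character function vanishes on every element of $\sigma$ of order strictly less than $m$, i.e.\ on the complement of $\mathrm{gen}(\sigma)$, and is identically $1$ on $\mathrm{gen}(\sigma)$. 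Hence, under the isomorphism $R(\sigma)_l \simeq \mathrm{Map}(\sigma,l)$, the subalgebra $\widetilde{R}(\sigma)_l = e_\sigma R(\sigma)_l$ maps precisely onto the ideal of functions supported on $\mathrm{gen}(\sigma)$, which is canonically identified with $\mathrm{Map}(\mathrm{gen}(\sigma), l)$ by extension-by-zero — this is exactly the right vertical arrow in \eqref{eq:character}. This shows the square commutes and that the top arrow, obtained by restriction of the bottom one, is an isomorphism of $l$-algebras. Compatibility with the $\operatorname{Aut}(\sigma)$-action is immediate since an automorphism $\alpha$ of $\sigma$ acts on characters by precomposition, preserves the set $\mathrm{gen}(\sigma)$, and by Lemma \ref{lem:perm} preserves $e_\sigma$; so all four maps in \eqref{eq:character} are $\operatorname{Aut}(\sigma)$-equivariant, being defined purely in terms of the group structure and the chosen root-of-unity identification $\epsilon$ (which Proposition \ref{prop:concrete} asserts is natural in $\sigma$).

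The one genuinely delicate point — and the step I expect to be the main obstacle — is the bookkeeping around the auxiliary field $l$ and the isomorphism $\epsilon$. One must be careful that the bottom isomorphism $R(\sigma)_l \xrightarrow{\sim} \mathrm{Map}(\sigma, l)$ is defined over $l$ (not over $k$): characters of irreducible $k$-representations of $\sigma$ need not be valued in $k$ a priori, but the hypothesis that $k$ contains the $n^{\mathrm{th}}$ roots of unity forces $R(\sigma) = R_k(\sigma) \simeq \bbZ[\sigma^\vee]$ already over $k$, so after applying $(-)_l$ and composing with $\epsilon\colon \mu_n(k) \xrightarrow{\sim} \mu_n(l)$ we do land in $\mathrm{Map}(\sigma, l)$, and this map is an $l$-algebra isomorphism because $|\sigma|$ distinct characters give $|\sigma|$ linearly independent functions on a set of size $|\sigma|$ (Vandermonde, using $1/n \in l$). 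The naturality footnote — that the identification of $\sigma^{\vee\vee}$ with $\sigma$ via $\epsilon$ is natural in $\sigma$ — is what guarantees the diagram is compatible with $\operatorname{Aut}(\sigma)$ and, more importantly for later applications, functorial enough to be glued over the conjugacy classes appearing in Theorem \ref{thm:main}. I would therefore state and verify this naturality carefully, as it is exactly what is invoked downstream, but the underlying mathematics is just the classical representation theory of cyclic groups over a field containing enough roots of unity.
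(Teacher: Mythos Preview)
Your proposal is correct and follows essentially the same route as the paper: establish the bottom isomorphism $R(\sigma)_l\simeq l[\sigma^\vee]\simeq \mathrm{Map}(\sigma,l)$ via characters, then show that $\widetilde{R}(\sigma)_l$ lands in the functions supported on $\mathrm{gen}(\sigma)$ using the restriction property of $e_\sigma$ (Lemma~\ref{lem:restriction}). The only difference is in the last step: the paper concludes equality by a dimension count --- both sides have $l$-dimension $\phi(|\sigma|)$, from \eqref{eq:primdef} on the left and $|\mathrm{gen}(\sigma)|=\phi(|\sigma|)$ on the right --- whereas you argue directly that the image of $e_\sigma$ in $\mathrm{Map}(\sigma,l)$ equals $1$ on every generator (via the $\Phi_m$-component), hence is the characteristic function of $\mathrm{gen}(\sigma)$; either finishes the proof.
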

\begin{proof}
Note first that we have the following identifications
\begin{equation}
\label{eq:Rk}
R(\sigma)_{{l}}\simeq R(\sigma)_{1/n}\otimes_{\bbZ[1/n]} {{l}}\simeq \bbZ[\sigma^\vee]_{1/n}\otimes_{\bbZ[1/n]} {{l}}\simeq {{l}}[\sigma^\vee]\,.
\end{equation}
By composing them with the following $l$-algebra isomorphism
\begin{eqnarray*}
 {{l}}[\sigma^\vee]\stackrel{\sim}{\too} {{l}}[\sigma]^\vee\simeq  \mathrm{Map}(\sigma,{{l}}) && \chi\mapsto (g\mapsto (\epsilon \circ\chi)(g))\,,
\end{eqnarray*}
we obtain the lower arrow in diagram \eqref{eq:character}. Note also that \eqref{eq:Rk} implies that $\widetilde{R}(\sigma)_{{l}}\simeq{{l}}[\sigma^\vee]_{\operatorname{prim}}$. Thanks to the right-hand side of \eqref{eq:primdef} (with $\Sigma=l$), the dimension of the $l$-vector space $\widetilde{R}(\sigma)_{{l}}$ is equal to $\phi(|\sigma|)$, where $\phi$ stands for Euler's totient function. Now, Lemma \ref{lem:restriction} leads to the following inclusion of $l$-algebras
$$
\widetilde{R}(\sigma)_{{l}}\subset 
\cap_{\sigma'\subsetneq \sigma}\ker(\mathrm{Map}(\sigma,{{l}})
\too \mathrm{Map}(\sigma',{{l}}))\,.
$$
This shows that an element of $\widetilde{R}(\sigma)_{{l}}$ is a function on $\sigma$
which is zero on all proper subgroups $\sigma'$ of $\sigma$. Consequently, we have $\widetilde{R}(\sigma)_{{l}}\subseteq \mathrm{Map}(\mathrm{gen}(\sigma),{{l}})$. Finally, since both sides of this inclusion have the same dimension $\phi(|\sigma|)$, the proof is finished.
\end{proof}

\section{$G_0$-motives over an orbifold}
\label{sec:G0motives}
%
%

Let $X$ be 
a smooth separated $k$-scheme equipped with a $G$-action. In this section, of independent interest, we construct a certain category of $G_0$-motives $\MMot(\cX)$ over the (global) orbifold $\cX:=[X/G]$, as well as a functor $\Psi\colon \MMot(\cX) \to \NMot(k)$. This will be a key technical tool used in the proof of Theorems \ref{thm:main} and \ref{thm:formula-main4}. 

\subsection{Construction} Let
$\DM(\cX)$ be the category whose objects are
the finite morphisms of smooth separated Deligne-Mumford stacks
$\cY \stackrel{f}{\to}\cX$ with the property that the stabilizer
orders in $\cY$ are invertible in $k$. Note that $\DM(\cX)$ is a
2-category. Concretely, a morphism from
$\cY_1 \stackrel{f_1}{\to} \cX$ to $\cY_2 \stackrel{f_2}{\to} \cX$
consists of a pair $(\alpha,\eta)$, where
$\alpha\colon \cY_1 \to \cY_2$ is a 1-morphism and
$\eta\colon f_2 \circ \alpha \Rightarrow f_1$ a
2-isomorphism. Whenever $f_1$, $f_2$, and $\eta$, are clear from the
context, we will omit them from the notation and write simply
$\alpha:\cY_1\r \cY_2$ for a morphism in $\DM(\cX)$.

Let $\HHmo(\cX)$ be the category with the same objects as $\DM(\cX)$.
Given any two objects $\cY$ and $\cY'$ of $\HHmo(\cX)$, let $\Hom_{\HHmo(\cX)}(\cY, \cY')$ be the set of isomorphism classes of the bounded derived category
\begin{equation}\label{eq:coh-support}
\cD^b \mathrm{coh}_{\cY \times_{\cX} \cY'}(\cY \times_{BG} \cY')\subset \perf(\cY \times_{BG} \cY')\,,
\end{equation}
of those coherent $\cO_{\cY \times_{BG} \cY'}$-modules 
that are (topologically) supported on the closed substack $\cY \times_{\cX} \cY'$ of $\cY \times_{BG} \cY'$. 
Note that since $\cY \times_{BG} \cY'$ is smooth, every bounded coherent complex on it
is perfect. Note also that the above definition \eqref{eq:coh-support} depends on the fact that $\cX$ is a (global) quotient stack.
The composition law
$$\Hom_{\HHmo(\cX)}(\cY',\cY'') \times \Hom_{\HHmo(\cX)}(\cY,\cY') \too \Hom_{\HHmo(\cX)}(\cY,\cY'')$$
is induced by the classical (derived) ``pull-back/push-forward'' formula
\begin{equation}\label{eq:assignment}
(\cE',\cE) \mapsto (p_{13})_\ast ((p_{23})^\ast(\cE') \otimes^{\bf L} (p_{12})^\ast(\cE)) \,,
\end{equation}
where $p_{ij}$ stands for the projection from the triple fiber product onto its $ij$-factor. Finally, the identity of an object $\cY$ of $\HHmo(\cX)$ is the (isomorphism class of the) structure sheaf $\cO_\Delta$ of the diagonal $\Delta$ in $\cY \times_{BG} \cY$. 

The {\em additivization} of $\HHmo(\cX)$ is the category $\HHmo_0(\cX)$ with the same objects as $\HHmo(\cX)$ and with abelian groups of morphisms 
$\Hom_{\HHmo_0(\cX)}(\cY,\cY')$ given by the Grothendieck group\footnote{The idea of using the Grothendieck group in the construction of categories of ``motivic nature'' goes back to the work of Manin \cite{Manin} and Gillet-Soul\'e \cite[\S5.2]{GS}.} of the triangulated category \eqref{eq:coh-support}. Thanks to Quillen's d\'evissage theorem \cite[\S5]{Quillen} and to the definition of $G$-theory, we have~isomorphisms
\begin{equation}\label{eq:G-theory}
\Hom_{\HHmo_0(\cX)}(\cY, \cY')\simeq G_0(\cY \times_{\cX} \cY')\,.
\end{equation}
In particular, we have a ring isomorphism
\begin{equation}\label{eq:ring}
\End_{\HHmo_0(\cX)}(\cX)\simeq K_0(\cX)\,.
\end{equation}
Note also that, since the assignment \eqref{eq:assignment} is exact in each one of the variables, the composition law of $\HHmo(\cX)$ extends naturally to $\HHmo_0(\cX)$.
\begin{definition}[$G_0$-motives]
The {\em category of $G_0$-motives $\MMot(\cX)$} is defined by formally adding to $\HHmo_0(\cX)$ all finite direct sums and direct summands.
Let us write $\UU(-)$ for the canonical functor from $\HHmo(\cX)$ to $\MMot(\cX)$.
\end{definition}
Given objects $\cY$ and $\cY'$ of $\HHmo(\cX)$,
let us write $i$ for the finite  morphism
$\cY\times_{BG}\cY'\too \cY \times \cY'$. Under this notation, we have the following exact functor
\begin{eqnarray}
\label{eq:dgfunctor}
& \cD^b \mathrm{coh}_{\cY \times_{\cX} \cY'}(\cY \times_{BG} \cY')\too \rep(\perf_\dg(\cY), \perf_\dg(\cY'))&\cE \mapsto {}_{\Phi_{i_\ast(\cE)}}\mathrm{B}\,,
\end{eqnarray}
where $\Phi_{i_\ast(\cE)}$ stands for the Fourier-Mukai dg-functor
\begin{eqnarray}
\label{eq:fm}
\perf_\dg(\cY)\too \perf_\dg(\cY') \qquad \cG \mapsto (p_2)_\ast((p_1)^\ast(\cG) \otimes^{\bf L} i_\ast(\cE))\,,
\end{eqnarray}
and ${}_{\Phi_{i_\ast(\cE)}}\mathrm{B}$ for the associated bimodule; see \S\ref{sub:dg}. By construction of the
categories $\HHmo(\cX)$ and $\Hmo(k)$, these considerations lead to a functor
\begin{eqnarray*}
\HHmo(\cX)\too \Hmo(k) & \cY \mapsto \perf_\dg(\cY) & \cE \mapsto {}_{\Phi_{i_\ast(\cE)}}\mathrm{B}\,,
\end{eqnarray*}
%
%
which naturally extends to the motivic categories:
\begin{eqnarray*}
\Psi\colon \MMot(\cX) \too \NMot(k) && \UU(\cY) \mapsto \U(\cY)\,.
\end{eqnarray*}
\begin{remark}[Sheaves of algebras]\label{rk:sheaves}
Suppose that $\cX$ is equipped with a \emph{flat} quasi-coherent sheaf of $\cO_{\cX}$-algebras $\cF$. In this case, as we now explain, it is possible to construct a variant $\Psi_\cF$ of the functor $\Psi$. Given objects $\cY_1 \stackrel{f_1}{\to} \cX$ and $\cY_2 \stackrel{f_2}{\to} \cX$ of $\HHmo(\cX)$, we may view \eqref{eq:dgfunctor} as an exact functor
$$
\cD^b \mathrm{coh}_{\cY \times_{\cX} \cY'}(\cY \times_{BG} \cY')\too  \rep(\perf_\dg(\cY_1;f_1^\ast(\cF)), 
\perf_\dg(\cY_2;f_2^\ast(\cF)))\,,
$$
where $\Phi_{i_\ast(\cE)}$ is defined by the above formula \eqref{eq:fm} but considered as a Fourier-Mukai dg-functor from $\perf_\dg(\cY_1;f_1^\ast(\cF))$ to $\perf_\dg(\cY_2;f_2^\ast(\cF))$.
By construction of the
categories $\HHmo(\cX)$ and $\Hmo(k)$, these considerations lead to a functor
\begin{eqnarray*}
\HHmo(\cX)\too \Hmo(k) & (\cY\stackrel{f}{\to} \cX) \mapsto \perf_\dg(\cY; f^\ast(\cF)) & \cE \mapsto {}_{\Phi_{i_\ast(\cE)}} \mathrm{B}\,,
\end{eqnarray*}
which naturally extends to the motivic categories:
\begin{eqnarray*}
\Psi_\cF\colon \MMot(\cX) \too \NMot(k) && \UU(\cY\stackrel{f}{\to} \cX) \mapsto \U(\cY; f^\ast(\cF))\,.
\end{eqnarray*}
%
\end{remark}
\subsection{Properties} In what follows, we establish some (structural) properties of the category of $G_0$-motives. These will be used in the next sections.
\subsubsection{Push-forward and pull-back}\label{rk:pullback}
Let $\alpha\colon \cY_1 \to \cY_2$ be a morphism in $\mathrm{DM}(\cX)$. Its {\em push-forward} $\alpha_\ast\colon \UU(\cY_1) \to \UU(\cY_2)$, resp. {\em pull-back} $\alpha^\ast\colon \UU(\cY_2) \to \UU(\cY_1)$, is defined as the Grothendieck class $[(\id,\alpha)_\ast(\cO_{\cY})]\in G_0(\cY\times_{\cX}\cY')$, resp. $[(\alpha,\id)_\ast(\cO_{\cY})]\in G_0(\cY'\times_{\cX}\cY)$; see \eqref{eq:G-theory}. Note that $\Psi(\alpha_\ast)=\alpha_\ast$ and $\Psi(\alpha^\ast)=\alpha^\ast$. Note also that if $\alpha,\beta \colon \cY_1 \to \cY_2$ are isomorphic 1-morphisms in $\DM(\cX)$, then $\alpha_\ast=\beta_\ast$ and $\alpha^\ast =\beta^\ast$.

\subsubsection{$K_0$-action}\label{rk:K0action}
  Let $\UU(\cY)$ be an object of $\MMot(\cX)$. The push-forward along the diagonal map 
$i_\Delta:\cY\r \cY\times_{BG} \cY$ leads to an exact functor
\begin{equation}\label{eq:diagonal}
i_{\Delta,\ast}:\perf(\cY)\too  \cD^b\mathrm{coh}_{\cY\times_{\cX}\cY}(\cY\times_{BG}\cY)
\end{equation}
that sends the tensor product $-\otimes_{\cY}-$ on the left-hand side to the ``pull-back/push-forward'' formula \eqref{eq:assignment} on the right-hand side. Therefore, by applying $K_0(-)$ to \eqref{eq:diagonal}, we obtain an induced ring morphism $K_0(\cY)\to \End_{\MMot(\cX)}(\UU(\cY))$. In other words, we obtain an action of the Grothendieck ring $K_0(\cY)$ on the $G_0$-motive $\UU(\cY)$.
\begin{lemma} \label{lem:compat} The functor $\Psi$ is compatible with the $K_0(\cY)$-action on $\UU(\cY)$ (defined above) and on $\U(\cY)$ (defined in \S\ref{sec:action}).
\end{lemma}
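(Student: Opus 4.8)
The plan is to unwind both definitions of the $K_0(\cY)$-action and check they agree under $\Psi$. Recall from \S\ref{sec:action} that the action on $\U(\cY)$ is induced by the commutative monoid structure of $\perf_\dg(\cY)$ in $\dgcat(k)$: the class $[M]\in K_0(\cY)$ acts as the bimodule ${}_{(M\otimes_{\cY}-)}\mathrm{B}$ associated to the dg functor $M\otimes_{\cY}-$. On the other hand, the action on $\UU(\cY)$ is induced by the functor \eqref{eq:diagonal}: the class $[M]$ acts via the push-forward $i_{\Delta,\ast}(M)$ along the diagonal $i_\Delta\colon\cY\to\cY\times_{BG}\cY$. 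So what I must show is that $\Psi$ sends the endomorphism of $\UU(\cY)$ given by $i_{\Delta,\ast}(M)$ to the endomorphism of $\U(\cY)$ given by ${}_{(M\otimes_{\cY}-)}\mathrm{B}$.

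By the very definition of $\Psi$ on morphisms (via \eqref{eq:dgfunctor} and \eqref{eq:fm}), the endomorphism $\Psi([i_{\Delta,\ast}(M)])$ of $\U(\cY)$ is the class of the bimodule ${}_{\Phi_{i_\ast(i_{\Delta,\ast}(M))}}\mathrm{B}$, where now $i\colon\cY\times_{BG}\cY\to\cY\times\cY$ and $\Phi_{(-)}$ is the Fourier--Mukai dg functor of \eqref{eq:fm}. So the key step is a Fourier--Mukai computation: the kernel $i_\ast(i_{\Delta,\ast}(M))=(i\circ i_\Delta)_\ast(M)$ is the push-forward of $M$ along the composite $\cY\to\cY\times\cY$, which is the graph of the identity, i.e.\ the ``diagonal'' map into the external product. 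The Fourier--Mukai functor with such a ``diagonal kernel twisted by $M$'' is, by the projection formula together with flat base change, naturally isomorphic to $M\otimes_{\cY}-$. Concretely: for $\cG\in\perf_\dg(\cY)$, $\Phi_{(i\circ i_\Delta)_\ast M}(\cG)=(p_2)_\ast\bigl((p_1)^\ast\cG\otimes^{\mathbf L}(i\circ i_\Delta)_\ast M\bigr)\simeq(p_2)_\ast(i\circ i_\Delta)_\ast\bigl((i\circ i_\Delta)^\ast(p_1)^\ast\cG\otimes^{\mathbf L} M\bigr)\simeq \cG\otimes^{\mathbf L}_{\cY}M$, since $p_2\circ i\circ i_\Delta=\id_\cY$ and $p_1\circ i\circ i_\Delta=\id_\cY$. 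This identifies the two Fourier--Mukai functors up to natural isomorphism, hence the two associated bimodules up to isomorphism in $\rep(\perf_\dg(\cY),\perf_\dg(\cY))$, which is what is needed since morphisms in $\NMot(k)$ are $K_0$-classes of such bimodules.

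Finally I would record that this identification is compatible with composition/ring structure — i.e.\ that it sends the product $[M][M']$ (computed via \eqref{eq:assignment} on the $\MMot(\cX)$ side) to the composite of the corresponding bimodule endomorphisms — so that what we obtain is genuinely an equality of ring morphisms $K_0(\cY)\to\End_{\NMot(k)}(\U(\cY))$ and not merely a set-theoretic coincidence. This follows from the fact, already noted around \eqref{eq:diagonal}, that $i_{\Delta,\ast}$ turns $-\otimes_\cY-$ into the pull-back/push-forward composition, together with the functoriality of $\Psi$ (which is a genuine functor). The main obstacle is the Fourier--Mukai computation of the previous paragraph: one has to be careful that all the base-change, projection-formula and push-forward identities hold at the level of the stacks $\cY$, $\cY\times_{BG}\cY$ and $\cY\times\cY$ (where the relevant morphisms $i$ and $i_\Delta$ are finite, so push-forward is well behaved on $\perf_\dg$), and that the isomorphisms are natural enough to yield an isomorphism of bimodules rather than merely an objectwise isomorphism of functors. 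Everything else is bookkeeping with the definitions of $\Psi$, \eqref{eq:dgfunctor}, and the $K_0$-actions.
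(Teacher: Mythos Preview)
Your proposal is correct and follows essentially the same approach as the paper. The paper's proof simply records the commutative square
\[
\xymatrix{
\perf(\cY)\ar@{=}[d]\ar[rrr]^-{i_{\Delta,\ast}}&&& \cD^b\mathrm{coh}_{\cY \times_{\cX} \cY}(\cY \times_{BG} \cY)\ar[d]^{\cE\mapsto  {}_{\Phi_{i_\ast(\cE)}}\mathrm{B}}\\
\perf(\cY)\ar[rrr]_-{M\mapsto{}_{(M\otimes_{\cY}-)}\mathrm{B}}&&& \rep(\perf_{\dg}(\cY),\perf_{\dg}(\cY))
}
\]
and applies $K_0(-)$; your Fourier--Mukai/projection-formula computation is precisely the verification that this square commutes, which the paper leaves implicit.
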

\begin{proof}
Consider the following commutative diagram:
\begin{equation}\label{eq:diagram}
\xymatrix{
\perf(\cY)\ar@{=}[d]\ar[rrr]^-{i_{\Delta,\ast}}&&& \cD^b\mathrm{coh}_{\cY \times_{\cX} \cY}(\cY \times_{BG} \cY)\ar[d]^{\cE\mapsto  {}_{\Phi_{i_\ast(\cE)}}\mathrm{B}}\\
\perf(\cY)\ar[rrr]_-{M\mapsto{}_{(M\otimes_{\cY}-)}\mathrm{B}}&&& \rep(\perf_{\dg}(\cY),\perf_{\dg}(\cY))\,.
}
\end{equation}
By applying $K_0(-)$ to \eqref{eq:diagram}, we obtain the claimed compatibility.
\end{proof}
\subsubsection{$K_0(\cX)$-linearity}\label{rk:compatibility} 
Let $\cY\stackrel{f}{\to} \cX$ be an object of $\DM(\cX)$. By composing the induced ring homomorphism $f^\ast\colon K_0(\cX) \to K_0(\cY)$ with the $K_0(\cY)$-action on $\UU(\cY)$ described in \S\ref{rk:K0action}, we obtain a $K_0(\cX)$-action on $\UU(\cY)$. A simple verification shows that this $K_0(\cX)$-action is compatible with the morphisms of $\MMot(\cX)$. In other words, $\MMot(\cX)$ is a $K_0(\cX)$-linear category. The map $\cX \to BG$ induces a ring homomorphism $R(G) \to K_0(\cX)$. Therefore, $\MMot(\cX)$ is also a $R(G)$-linear category.
\subsubsection{$G$-action}\label{rk:normalizer} Let $Y\hookrightarrow X$ be a 
smooth closed $k$-subscheme of~$X$ which is preserved by a subgroup $H\subseteq G$, and
$[Y/H]\stackrel{f}{\to} \cX$ the corresponding object of $\DM(\cX)$. Consider also the objects $[gY/gHg^{-1}]\stackrel{f_g}{\to} \cX$ of $\DM(\cX)$ with $g \in G$. Note that we have a commutative $2$-diagram of (global) orbifolds
$$
\xymatrix{
[Y/H] \ar[rr]^-{\alpha_g} \ar[dr]_-f & \ar@{}[d]|-{\overset{\eta_g}{\Leftarrow}}&  [gY/gHg^{-1}] \ar[dl]^-{f_g} \\
& \cX & \,,
}
$$
where the $1$-isomorphism $\alpha_g$ is given by $(y\mapsto gy, h\mapsto ghg^{-1})$ and the evaluation of the $2$-isomorphism $\eta_g\colon f_g \circ \alpha_g \Rightarrow f$ at $y \in Y$ is given by $gy\stackrel{g^{-1}}{\to} y$. Therefore, $\alpha_g$ may be considered as an isomorphism in the category $\DM(\cX)$; in the sequel, we will write $g$ instead of $\alpha_g$ (note that $\alpha_g$ acts as $g$ on $Y$). By functoriality (see \S\ref{rk:pullback}), we obtain inverse isomorphisms $g^\ast$ and $g_\ast$ between $\UU([Y/H])$ and $\UU([gY/gHg^{-1}])$.

Now, assume that $Y$ is stabilized by the normalizer $N(H)$ of $H$. In this case, the above considerations lead to a $N(H)$-action on $[Y/H]$ and, consequently, on the $G_0$-motive $\UU([Y/H])$. Moreover, the induced morphism $f^\ast\colon \UU(\cX)_{1/n} \to \UU([Y/H])_{1/n}$ factors through the direct summand $\UU([Y/H])_{1/n}^{N(H)}$.
\begin{example}\label{ex:action}
Let $\sigma \in \varphi$ be a cyclic subgroup. By choosing $Y=X^\sigma$, resp. $Y=X$, and $H=\sigma$, we obtain an induced $N(\sigma)$-action on $[X^\sigma/\sigma]$, resp. $[X/\sigma]$, and, consequently, on the $G_0$-motive $\UU([X^\sigma/\sigma])$, resp. $\UU([X/\sigma])$.
\end{example}
%
%
\subsubsection{Base-change functoriality}
\label{rk:basefield} Given a field extension $l/k$, base-change $-\times_k l$ leads to a (2-)functor $\DM(\cX)\r \DM(\cX_l)$, where $\cX_l=[X_l/G]$. By construction, this functor extends to $G_0$-motives $(-)\times_k l:\MMot(\cX)\to \MMot(\cX_l), \UU(\cY)\mapsto \UU(\cY_l)$.
\subsubsection{Push-forward functoriality}\label{rk:base}
Let $[Y/H] \to [X/G]$ be a morphism of (global) orbifolds induced by a finite map $Y \to X$ and by a group homomorphism $H \to G$. The associated push-forward functor $\DM([Y/H]) \to \DM([X/G])$ sends $\cZ\r [Y/H]$ to the composition $\cZ\r [Y/H]\r [X/G]$. By construction, this functor extends naturally to $G_0$-motives $\MMot([Y/H])\r \MMot([X/G]), \UU(\cZ) \mapsto \UU(\cZ)$.
\subsubsection{Pull-back functoriality}\label{rk:group}
The morphism of (global) orbifolds $[X/G] \to BG$ leads to a pull-back functor $\MMot(BG) \to \MMot(\cX), \UU(\cY) \mapsto \UU(\cX\times_{BG} \cY)$.
\section{Proofs: decomposition of orbifolds}\label{sec:proof}
In this section we prove Theorem \ref{thm:main} and Corollaries \ref{cor:main1} and \ref{cor:main2}. We start with some preliminaries on representation rings. 
\subsection{Decomposition of representation rings}\label{sub:decomp-representation}
Given a cyclic subgroup $\sigma \in \varphi$, recall from Definition \ref{def:primitive} that the representation ring $R(\sigma)_{1/n}$ comes equipped with a canonical idempotent $e_\sigma$. Since $N(\sigma)$ maps naturally into the group $\mathrm{Aut}(\sigma)$ of automorphisms of $\sigma$, it follows from Remark \ref{rk:action} that $N(\sigma)$ acts on the primitive part $\widetilde{R}(\sigma)_{1/n}$. Moreover, the restriction homomorphism $\mathrm{Res}_\sigma^G\colon R(G)_{1/n} \to R(\sigma)_{1/n}$ factors through the direct summand $R(\sigma)_{1/n}^{N(\sigma)}$; this follows, for example, from the fact that the induced morphism $\UU([\bullet/G])_{1/n} \to \UU([\bullet/\sigma])_{1/n}$ factors through the direct summand $\UU([\bullet/\sigma])_{1/n}^{N(\sigma)}$ (see \S\ref{rk:normalizer}). Under the above notations, we have the following refinement of a result of Vistoli:
\begin{proposition}\label{prop:vistoli}
The following homomorphism of $\bbZ[1/n]$-algebras is invertible:
\begin{equation} \label{eq:vistoli}
R(G)_{1/n}\xrightarrow{\bigoplus_\sigma e_\sigma \circ \mathrm{Res}^G_\sigma}  \bigoplus_{\sigma\in \vphi}\widetilde{R}(\sigma)^{N(\sigma)}_{1/n}\,.
\end{equation}
\end{proposition}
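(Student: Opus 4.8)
The plan is to produce an explicit two-sided inverse to the homomorphism $\rho:=\bigoplus_{\sigma\in\vphi}e_\sigma\circ\mathrm{Res}^G_\sigma$ of \eqref{eq:vistoli}, built from the induction maps $\mathrm{Ind}^G_\sigma$ rescaled by the indices $[N(\sigma):\sigma]$; this is a $\bbZ[1/n]$-integral refinement of Vistoli's theorem, and the mechanism is the one that will later drive the proof of Theorem \ref{thm:main} for a point. All the identities involved take place in $R(G)_{1/n}$ for an arbitrary base field with $1/n$ invertible, so by Galois descent (Proposition \ref{prop:galois}) and, in positive characteristic, by the block decomposition (which, since $p\nmid n$, identifies $R(G)$ with its characteristic-zero lift compatibly with restriction, induction and the idempotents $e_\sigma$), we may assume $\mu_n\subseteq k$ and $\mathrm{char}(k)=0$; character theory and Proposition \ref{prop:concrete} are then at our disposal.

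First I would verify that $\rho$ is injective. Using Proposition \ref{prop:concrete} (with $l=k$), the inclusion $\widetilde R(\sigma)_{1/n}\subseteq\widetilde R(\sigma)_k=\mathrm{Map}(\mathrm{gen}(\sigma),k)$ sends $e_\sigma\mathrm{Res}^G_\sigma[V]$ to the restriction of the character $\chi_V$ to the set of generators of $\sigma$. Hence $\rho([V]-[V'])=0$ forces $\chi_V$ and $\chi_{V'}$ to agree on the generators of every cyclic subgroup, and therefore on all of $G$, since each element generates a cyclic subgroup conjugate to some $\sigma\in\vphi$ and characters are class functions; in characteristic zero this gives $[V]=[V']$.

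The heart of the argument is the construction of the inverse. For $\sigma\in\vphi$ the index $[N(\sigma):\sigma]$ divides $n/|\sigma|$, hence is a unit in $\bbZ[1/n]$, so one may set
\[
\Lambda:=\bigoplus_{\sigma\in\vphi}\lambda_\sigma,\qquad \lambda_\sigma:=\frac{1}{[N(\sigma):\sigma]}\,\mathrm{Ind}^G_\sigma\colon\ \widetilde R(\sigma)^{N(\sigma)}_{1/n}\too R(G)_{1/n}.
\]
To prove $\rho\circ\Lambda=\mathrm{id}$, for $\sigma,\tau\in\vphi$ I would expand $e_\tau\circ\mathrm{Res}^G_\tau\circ\mathrm{Ind}^G_\sigma$ by the Mackey double-coset formula as a sum over $g\in\tau\backslash G/\sigma$ of terms $e_\tau\,\mathrm{Ind}^{\tau}_{\tau\cap{}^g\sigma}\,\mathrm{Res}^{{}^g\sigma}_{\tau\cap{}^g\sigma}({}^g(-))$. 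By the projection formula together with Lemma \ref{lem:restriction} (which gives $\mathrm{Res}^\tau_{\tau'}(e_\tau)=0$ for $\tau'\subsetneq\tau$), such a term vanishes unless $\tau\subseteq{}^g\sigma$; and when $\tau\subsetneq{}^g\sigma$ the remaining factor $\mathrm{Res}^{{}^g\sigma}_{\tau}$, applied to the element ${}^gW\in\widetilde R({}^g\sigma)$, again vanishes by Lemma \ref{lem:restriction}, this time for the cyclic group ${}^g\sigma$. Thus only the double cosets with $\tau={}^g\sigma$ survive: for $\tau\ne\sigma$ there are none, so $\rho_\tau\circ\lambda_\sigma=0$, while for $\tau=\sigma$ these are exactly the cosets $\sigma\backslash N(\sigma)$ (as $\sigma$ is normal in $N(\sigma)$), and the sum collapses to $\sum_{\bar g\in N(\sigma)/\sigma}e_\sigma\,{}^{\bar g}W=[N(\sigma):\sigma]\,W$ for $W\in\widetilde R(\sigma)^{N(\sigma)}_{1/n}$, so $\rho_\sigma\circ\lambda_\sigma=\mathrm{id}$. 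Hence $\rho\circ\Lambda=\mathrm{id}$; combined with the injectivity of $\rho$ this shows $\rho$ and $\Lambda$ are mutually inverse, and since each $e_\sigma\circ\mathrm{Res}^G_\sigma$ is a unital ring map (with $1\mapsto e_\sigma$) the morphism $\rho$ is the asserted isomorphism of $\bbZ[1/n]$-algebras.

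The step I expect to be most delicate is the Mackey bookkeeping above: one must see that the two uses of Lemma \ref{lem:restriction} — once on $\tau$ through the projection formula, once on the conjugate ${}^g\sigma$ — jointly annihilate every double coset except those with $\tau={}^g\sigma$, and that the surviving contribution is controlled by $N(\sigma)/\sigma$ (and not by the centralizer, nor by the full normalizer). This is precisely what makes $[N(\sigma):\sigma]^{-1}$ the right normalization and explains why inverting $n$ suffices; it is the ingredient we borrow from Vistoli's proof.
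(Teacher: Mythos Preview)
Your proof is correct and follows the same overall architecture as the paper's: reduce to the case $\mu_n\subseteq k$ via Galois descent (Proposition~\ref{prop:galois}), then establish the split case. The difference is only one of detail: where the paper simply cites Vistoli \cite[Prop.~(1.5)]{Vistoli} and tom Dieck \cite[Cor.~7.7.10]{Dieck} for the split case, you reproduce that argument in full, building the explicit inverse $\Lambda=\bigoplus_\sigma [N(\sigma):\sigma]^{-1}\,\mathrm{Ind}^G_\sigma$ and verifying $\rho\circ\Lambda=\mathrm{id}$ via the Mackey formula together with the two applications of Lemma~\ref{lem:restriction}. Your Mackey bookkeeping is correct --- the projection formula kills cosets with $\tau\cap{}^g\sigma\subsetneq\tau$, Lemma~\ref{lem:restriction} applied to ${}^g\sigma$ kills those with $\tau\subsetneq{}^g\sigma$, and the survivors are indexed by $N(\sigma)/\sigma$ exactly because $\sigma$ is normal in $N(\sigma)$; this is indeed the computation behind Lemma~\ref{lem:aux1} later in the paper. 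The additional reduction to characteristic zero is harmless but not strictly necessary: once $\mu_n\subseteq k$ and $1/n\in k$, the group algebra $k[G]$ is split semisimple and the entire representation-ring apparatus (including characters, $\mathrm{Res}$, $\mathrm{Ind}$, and the idempotents $e_\sigma$) is canonically identified with its characteristic-zero counterpart, so your injectivity-via-characters argument goes through as stated.
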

\begin{proof}
In the case where $k$ contains the $n^{\mathrm{th}}$ roots of unity, the isomorphism \eqref{eq:vistoli} was proved in \cite[Prop.~(1.5)]{Vistoli}; see also \cite[Cor.~7.7.10]{Dieck}. Using Proposition \ref{prop:galois}, the general case follows now from Galois descent.
\end{proof}
\begin{remark}
Let $l$ be the field obtained from $k$ by adjoining the $n^{\mathrm{th}}$ roots of unity, and $r$ the degree of the (finite) field extension $l/k$. The above Proposition \ref{prop:vistoli}, with $1/n$ replaced by $1/nr$, was proved by Vistoli in \cite[Thm.~(5.2)]{Vistoli}.
\end{remark} 
\begin{notation}\label{not:idempotents}
Let $\tilde{e}_\sigma \in R(G)_{1/n}$ be the idempotent corresponding to the direct summand $\widetilde{R}(\sigma)_{1/n}^{N(\sigma)}$ under the above isomorphism \eqref{eq:vistoli}.
\end{notation}
\subsection*{Proof of Theorem \ref{thm:main}}
First, recall from \S\ref{rk:compatibility} that the category $\MMot([X/G])$ is $R(G)$-linear. Given a cyclic subgroup $\sigma \in \varphi$, let us write $\gamma_\sigma$ for the associated morphism of (global) orbifolds $[X^\sigma/\sigma] \rightarrow [X/G]$. As explained in Example \ref{ex:action}, the normalizer $N(\sigma)$ acts on the $G_0$-motive $\UU([X^\sigma/\sigma])$. Moreover, the pull-back morphism $\gamma_\sigma^\ast\colon \UU([X/G])_{1/n}\r \UU([X^\sigma/\sigma])_{1/n}$ factors through the direct summand  $\UU([X^\sigma/\sigma])_{1/n}^{N(\sigma)}$. Let $e_\sigma\in R(\sigma)_{1/n}$ be the canonical idempotent introduced in Definition \ref{def:primitive}. As explained in \S\ref{sec:prim}, since $N(\sigma)$ maps naturally into $\mathrm{Aut}(\sigma)$, the idempotent $e_\sigma$ is invariant under the $N(\sigma)$-action on $R(\sigma)_{1/n}$. Hence, we can (and will) identify the $G_0$-motives $e_\sigma(\UU([X^\sigma/\sigma])_{1/n}^{N(\sigma)})$ and $(e_\sigma(\UU[X^\sigma/\sigma])_{1/n})^{N(\sigma)}$. Under the above notations and identifications, we have the following morphism
\begin{equation*}\label{eq:formula-main-new1}
  \bar{\theta}\colon \UU([X/G])_{1/n} \xrightarrow{\bigoplus_\sigma e_\sigma \circ \gamma^\ast_\sigma} \bigoplus_{\sigma \in \vphi} \widetilde{\UU}([X^\sigma/\sigma])^{N(\sigma)}_{1/n}
\end{equation*} 
in the category $\MMot([X/G])_{1/n}$. As explained in \S\ref{rk:pullback}, resp. Lemma \ref{lem:compat}, the functor $\Psi$ is compatible with pull-backs, resp. $K_0$-actions. Therefore, by applying the functor $\Psi$ to $\overline{\theta}$ we obtain the following morphism
\begin{equation*}\label{eq:formula-main-new}
  \theta\colon \U([X/G])_{1/n} \xrightarrow{\bigoplus_\sigma  e_\sigma\circ \gamma^\ast_\sigma} \bigoplus_{\sigma \in \vphi} \widetilde{\U}([X^\sigma/\sigma])^{N(\sigma)}_{1/n}
\end{equation*}
in the category $\NMot(k)_{1/n}$. Thanks to Proposition \ref{prop:key} below, the morphism $\overline{\theta}$ (and consequently $\theta$) is invertible. Since $\theta$ is an isomorphism of monoids, the proof of Theorem \ref{thm:main} follows now from the equivalences of categories~\eqref{eq:equivalence1}-\eqref{eq:equivalence2}. 

\begin{proposition}\label{prop:key}
The above morphism $\overline{\theta}$ is invertible.
\end{proposition}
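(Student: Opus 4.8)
The plan is to show that $\bar\theta$ is invertible in the $G_0$-motivic category $\MMot(\cX)_{1/n}$ by a Yoneda argument reducing the statement to the $K_0$-incarnation of Vistoli's formula \eqref{eq:Vistoli}; the whole point of the detour through $\MMot(\cX)$ is that its $\Hom$-groups are $G$-theory groups \eqref{eq:G-theory}, so Quillen's dévissage --- a key ingredient of Vistoli's method which is \emph{not} available for a general additive invariant --- can be used. Concretely, since $\MMot(\cX)_{1/n}$ is the idempotent completion of the additive category generated by the objects $\UU(\cY)$, $\cY\in\DM(\cX)$, the morphism $\bar\theta$ is invertible as soon as $\Hom_{\MMot(\cX)}(\UU(\cY),\bar\theta)_{1/n}$ is bijective for every such $\cY$. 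By \eqref{eq:G-theory} the source is $G_0(\cY)_{1/n}$, and --- using that $e_\sigma$ is $N(\sigma)$-invariant (\S\ref{sec:prim}) and $1/|N(\sigma)|\in\bbZ[1/n]$ --- the target is $\bigoplus_{\sigma\in\vphi}(e_\sigma\,G_0(\cY\times_{\cX}[X^\sigma/\sigma])_{1/n})^{N(\sigma)}$, with the map induced by pull-back along the projections. So the task becomes:
\[
G_0(\cY)_{1/n}\ \xrightarrow{\ \sim\ }\ \bigoplus_{\sigma\in\vphi}\Bigl(e_\sigma\,G_0\bigl(\cY\times_{\cX}[X^\sigma/\sigma]\bigr)_{1/n}\Bigr)^{N(\sigma)}\qquad\text{for all }\cY\in\DM(\cX).
\]

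For $\cY\to\cX=[X/G]$ finite, the fibre products $\cY\times_{\cX}[X^\sigma/\sigma]$ unwind into disjoint unions of orbifolds assembled from the fixed loci of the $G$-action pulled back to $\cY$; after applying the primitive idempotents $e_\sigma$ --- which by Lemma \ref{lem:restriction} annihilate every contribution of a proper subgroup --- and taking $N(\sigma)$-invariants, the right-hand side is exactly the fixed-point decomposition of Vistoli's formula \eqref{eq:Vistoli}, now applied to $\cY$ rather than to $\cX$ and only at the level of $K_0$. Since $\cY$ is a smooth separated Deligne--Mumford stack with stabiliser orders invertible in $k$, Vistoli's hypotheses are met (the quasi-projectivity assumption being removable, cf.\ Remark \ref{rk:ample}), so this map is invertible; one must also check that the pull-back maps supplied by $\MMot(\cX)$ agree with the transfer maps entering Vistoli's isomorphism, which follows from the pull-back/push-forward description \eqref{eq:assignment} of the composition law.

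The hard part will be the combinatorial bookkeeping in the second step: computing $\cY\times_{\cX}[X^\sigma/\sigma]$ and matching the $\vphi$-indexed summands (together with the idempotents $e_\sigma$ and the $N(\sigma)$-invariants) against Vistoli's intrinsic indexing by conjugacy classes, and then checking naturality in $\cY$ so that the Yoneda step applies. A convenient way to organise this is to factor $\gamma_\sigma$ as $[X^\sigma/\sigma]\hookrightarrow[X/\sigma]\to[X/G]$: base change of the finite étale map $[X/\sigma]\to[X/G]$ along $\cX\to BG$ (\S\ref{rk:group}) reduces its contribution to the purely group-theoretic Proposition \ref{prop:vistoli}, while for the closed immersion $[X^\sigma/\sigma]\hookrightarrow[X/\sigma]$ one runs the localization sequence in $G$-theory and stratifies the complement $[(X\setminus X^\sigma)/\sigma]$ by stabiliser type, observing that $e_\sigma$ kills the groups $G_\ast$ ($\ast=0,1$) of each stratum because the $R(\sigma)$-action there factors through a proper subquotient --- this is the concentration phenomenon behind the isomorphism $e_\sigma\UU([X^\sigma/\sigma])_{1/n}\simeq e_\sigma\UU([X/\sigma])_{1/n}$.
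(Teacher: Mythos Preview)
Your Yoneda strategy has a real gap. You claim that after unwinding, the map $\Hom(\UU(\cY),\bar\theta)_{1/n}$ ``is exactly the fixed-point decomposition of Vistoli's formula, now applied to $\cY$,'' and that ``since $\cY$ is a smooth separated Deligne--Mumford stack with stabiliser orders invertible in $k$, Vistoli's hypotheses are met.'' They are not: Vistoli's theorem is stated and proved only for global quotients $[Z/H]$, and an arbitrary object of $\DM(\cX)$ is a smooth DM stack with no reason to be of that form. Even for $\cY=[Z/H]$, the fibre products $\cY\times_\cX[X^\sigma/\sigma]$ are typically singular and indexed by double cosets, so matching the resulting $G_0$-groups, the idempotents $e_\sigma$, and the maps against Vistoli's intrinsic decomposition of $K_0(\cY)$ (which is indexed by cyclic subgroups of $H$, not of $G$) is itself a substantial argument you have only gestured at. Your final paragraph's localisation/stratification sketch would amount to reproving Vistoli's theorem inside $\MMot(\cX)$ rather than reducing to it.

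The paper proceeds quite differently and avoids Yoneda altogether. It first exhibits a right inverse directly: setting $\bar\psi':=\bigoplus_\sigma\gamma_{\sigma,\ast}\circ\iota_\sigma$, three short lemmas show that $\bar\theta\circ\bar\psi'$ is invertible (the off-diagonal blocks vanish by $R(G)_{1/n}$-linearity of $\MMot(\cX)_{1/n}$ and orthogonality of the idempotents $\tilde e_\sigma$; the diagonal blocks are handled by factoring $\gamma_\sigma$ as $[X^\sigma/\sigma]\to[X/\sigma]\to[X/G]$ and treating the two steps separately). Thus $\bar\theta$ is split epi, and the only obstruction is the idempotent $e:=\id-\bar\psi\,\bar\theta\in\End(\UU(\cX)_{1/n})\simeq K_0(\cX)_{1/n}$; see \eqref{eq:ring}. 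The decisive observation is that the additive invariant $K_0(-)_{1/n}$ sends this endomorphism ring \emph{faithfully} into $\End(K_0(\cX)_{1/n})$: under the identification \eqref{eq:ring} a class acts on $K_0(\cX)_{1/n}$ by left multiplication, and the ring is unital. Hence $e=0$ iff $K_0(\bar\theta)_{1/n}$ is an isomorphism, and the latter is precisely Vistoli's $K_0$-formula for the \emph{single} orbifold $[X/G]$ --- no other test objects $\cY$ are required.
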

%
%
\begin{proof} 
Consider the inclusion $\iota$ and projection $\pi$ morphisms
\begin{eqnarray*}
\widetilde{\UU}([X^\sigma/\sigma])^{N(\sigma)}_{1/n} \stackrel{\iota_\sigma}{\too} \UU([X^\sigma/\sigma])_{1/n}  & & \UU([X^\sigma/\sigma])_{1/n} \stackrel{\pi_\sigma}{\too} \widetilde{\UU}([X^\sigma/\sigma])^{N(\sigma)}_{1/n}\,.
\end{eqnarray*}
Under these notations, we have $\overline{\theta}=\bigoplus_{\sigma} \pi_\sigma\circ \gamma^\ast_\sigma$.  Thanks to
Lemmas \ref{lem:vanishing}, \ref{lem:aux1}, and \ref{lem:aux2} below (see Remark \ref{rk:right-inverse}),
the morphism $\overline{\theta}$ admits a right inverse $\bar{\psi}$.
This implies that $e:=\id-\bar{\psi}\circ \bar{\theta}$ is an idempotent
in $\End(\UU([X/G])_{1/n})\simeq K_0([X/G])_{1/n}$.  We need to prove that $e=0$. Let $l$ be the field obtained from $k$ by adjoining the $n^{\mathrm{th}}$ roots of unity. Thanks to Corollary \ref{cor:nilp}, it is sufficient
to prove that $e\times_k l=0$ (consult \S\ref{rk:basefield} for the notation
$(-)\times_k l$). Hence, we may assume without loss of generality that $l=k$ and, consequently, that $k$ contains the $n^{\mathrm{th}}$ roots of unity.

In order to prove that $e=0$, let us consider the additive invariant $K_0(-)_{1/n}$ with values in the category of $\bbZ[1/n]$-modules. The following composition
\begin{equation}
\label{eq:action}
\End(\UU([X/G])_{1/n})\stackrel{\mathrm{(a)}}{\too} \End(\U([X/G])_{1/n})\stackrel{\mathrm{(b)}}{\too} \End(K_0([X/G])_{1/n})\,,
\end{equation}
where (a) is induced by the functor $\Psi$ and (b) by the functor corresponding to $K_0(-)_{1/n}$ under the equivalence of categories \eqref{eq:equivalence1}, sends $[\cE]_{1/n} \in K_0([X/G])_{1/n}$ to left multiplication by $[\cE]_{1/n}$. Since $K_0([X/G])_{1/n}$ is a unital $\bbZ[1/n]$-algebra, this implies that the morphism \eqref{eq:action} is injective. By an abuse of notation, let us still denote by $K_0(-)_{1/n}$ the composition \eqref{eq:action}. Under this notation, $e=0$ if and only if $K_0(e)_{1/n}=0$. By construction, we have $K_0(e)_{1/n}=\id-K_0(\bar{\psi})_{1/n}\circ K_0(\bar{\theta})_{1/n}$. Therefore, it suffices to prove that $K_0(\bar{\theta})_{1/n}$ is an isomorphism.  Modulo the caveat in Remark \ref{rk:ample} below, this was proved in \cite[Thm.~(5.2)]{Vistoli}.
\end{proof}
\begin{remark}[Ample line bundle]\label{rk:ample}
  Vistoli's results \cite{Vistoli} were proved under the assumption that $X$ carries an ample line
  bundle. However, as explained by him in \cite[page~402]{Vistoli}, this
  assumption is only used in
  the proof of \cite[Lem.~1.1]{Vistoli}. By invoking \cite[Thm.~2.3
  and Cor.~2.4]{Azumaya}, we observe that the proof of \cite[Lem.~1.1]{Vistoli} remains valid under the much weaker assumption that $X$ is
  quasi-compact and quasi-separated. Therefore, we can freely use the
results from \cite{Vistoli} is our current setting.
\end{remark}
\begin{remark}[Right-inverse]\label{rk:right-inverse}
Let $\bar{\psi}':=\bigoplus_\sigma \gamma_{\sigma,\ast}\circ \iota_\sigma$. In what follows, we will prove that the composition $\bar{\theta}\circ\bar{\psi}'$ is invertible. This implies that $\bar{\theta}$ admits a right~inverse~$\overline{\psi}$.
\end{remark}
\begin{lemma}\label{lem:vanishing}
For every $\sigma\neq \sigma' \in \vphi$, we have $\pi_{\sigma'}\circ \gamma^\ast_{\sigma'} \circ \gamma_{\sigma,\ast} \circ \iota_\sigma =0$.
\end{lemma}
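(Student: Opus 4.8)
The plan is to compute the composite $\gamma^\ast_{\sigma'}\circ\gamma_{\sigma,\ast}$ by base change over $[X/G]$ and then observe that, once sandwiched between the idempotents $e_\sigma$ and $e_{\sigma'}$, every term of the resulting sum vanishes. First I would reduce the claimed identity: since $\iota_\sigma$ factors through the direct summand of $\UU([X^\sigma/\sigma])_{1/n}$ cut out by $e_\sigma$, and $\pi_{\sigma'}$ factors through the direct summand of $\UU([X^{\sigma'}/\sigma'])_{1/n}$ cut out by $e_{\sigma'}$ (composed with the averaging over $N(\sigma')$), it suffices to prove that $e_{\sigma'}\circ\gamma^\ast_{\sigma'}\circ\gamma_{\sigma,\ast}\circ e_\sigma=0$ as a morphism $\UU([X^\sigma/\sigma])_{1/n}\to\UU([X^{\sigma'}/\sigma'])_{1/n}$ in $\MMot([X/G])_{1/n}$.

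By \eqref{eq:G-theory}, $\Hom_{\MMot([X/G])}(\UU([X^\sigma/\sigma]),\UU([X^{\sigma'}/\sigma']))\simeq G_0([X^\sigma/\sigma]\times_{[X/G]}[X^{\sigma'}/\sigma'])$, and a standard computation identifies the fiber product, as $g$ ranges over a set of representatives of the double cosets $\sigma\backslash G/\sigma'$, with the disjoint union of the (global) orbifolds $W_g:=[\,(X^\sigma\cap X^{g\sigma'g^{-1}})\,/\,(\sigma\cap g\sigma'g^{-1})\,]$. Accordingly $G_0$ decomposes as $\bigoplus_g G_0(W_g)$, and the morphism $\gamma^\ast_{\sigma'}\circ\gamma_{\sigma,\ast}$ decomposes as $\sum_g\mu_g$ with $\mu_g\in G_0(W_g)$. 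On the level of coarse spaces the two projections $p_g\colon W_g\to[X^\sigma/\sigma]$ and $q_g\colon W_g\to[X^{\sigma'}/\sigma']$ are closed immersions, and on stabilizer groups $p_g$ is the inclusion $\sigma\cap g\sigma'g^{-1}\hookrightarrow\sigma$ while $q_g$ is, after the isomorphism $s\mapsto g^{-1}sg$, the inclusion $g^{-1}\sigma g\cap\sigma'\hookrightarrow\sigma'$. Using the compatibilities of \S\ref{rk:pullback}, \S\ref{rk:K0action} and \S\ref{rk:compatibility} (push-forwards and pull-backs in $\MMot$ are the images under $\Psi$ of the usual functors, plus the projection formula), pre-composition with the endomorphism of $\UU([X^\sigma/\sigma])$ induced by $e_\sigma$ acts on the summand $G_0(W_g)$ by multiplication by $p_g^\ast(e_\sigma)$, the image of $\Res^\sigma_{\sigma\cap g\sigma'g^{-1}}(e_\sigma)$; likewise post-composition with the endomorphism induced by $e_{\sigma'}$ acts by multiplication by $q_g^\ast(e_{\sigma'})$, the image of $\Res^{\sigma'}_{g^{-1}\sigma g\cap\sigma'}(e_{\sigma'})$.

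I would then run the following dichotomy for each $g$. If $g\sigma'g^{-1}\not\subseteq\sigma$, then $g^{-1}\sigma g\cap\sigma'$ is a \emph{proper} subgroup of $\sigma'$, hence $\Res^{\sigma'}_{g^{-1}\sigma g\cap\sigma'}(e_{\sigma'})=0$ by Lemma \ref{lem:restriction}, so $q_g^\ast(e_{\sigma'})=0$. If instead $g\sigma'g^{-1}\subseteq\sigma$, then since $\sigma$ and $\sigma'$ are \emph{distinct} representatives of conjugacy classes in $\varphi$ they are non-conjugate, so the inclusion is necessarily strict, $g\sigma'g^{-1}\subsetneq\sigma$; consequently $\sigma\cap g\sigma'g^{-1}=g\sigma'g^{-1}$ is a proper subgroup of $\sigma$, hence $\Res^\sigma_{\sigma\cap g\sigma'g^{-1}}(e_\sigma)=0$ by Lemma \ref{lem:restriction}, so $p_g^\ast(e_\sigma)=0$. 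In either case $e_{\sigma'}\cdot\mu_g\cdot e_\sigma=0$, and summing over the double cosets gives $e_{\sigma'}\circ\gamma^\ast_{\sigma'}\circ\gamma_{\sigma,\ast}\circ e_\sigma=0$, which by the reduction of the first paragraph yields $\pi_{\sigma'}\circ\gamma^\ast_{\sigma'}\circ\gamma_{\sigma,\ast}\circ\iota_\sigma=0$.

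The only genuinely delicate point is the bookkeeping in the second paragraph: carefully justifying the double-coset description of $[X^\sigma/\sigma]\times_{[X/G]}[X^{\sigma'}/\sigma']$ (keeping track of the distinction between $\cY\times_{BG}\cY'$ and $\cY\times_{[X/G]}\cY'$ in the definition of $\HHmo$ — harmless, since the relevant classes are supported on the fiber product over $[X/G]$) and pinning down how $p_g$ and $q_g$ act on the stabilizer groups. Everything afterward is the projection formula together with Lemma \ref{lem:restriction}.
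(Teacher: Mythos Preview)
Your proof is correct but takes a genuinely different route from the paper. The paper's argument is a three-line application of $R(G)_{1/n}$-linearity: since $\MMot([X/G])$ is $R(G)$-linear (\S\ref{rk:compatibility}), the morphism in question commutes with the action of the \emph{global} idempotents $\tilde{e}_\sigma,\tilde{e}_{\sigma'}\in R(G)_{1/n}$ of Notation~\ref{not:idempotents}; as $\tilde{e}_\sigma$ acts as the identity on the source and $\tilde{e}_{\sigma'}$ on the target, and $\tilde{e}_\sigma\tilde{e}_{\sigma'}=0$, the morphism vanishes. You instead decompose $\gamma^\ast_{\sigma'}\circ\gamma_{\sigma,\ast}$ via a Mackey-style base-change over double cosets and kill each summand using only the \emph{local} idempotents $e_\sigma\in R(\sigma)_{1/n}$, $e_{\sigma'}\in R(\sigma')_{1/n}$ together with Lemma~\ref{lem:restriction}. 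The paper's proof is slicker and exposes the conceptual reason (orthogonal central idempotents), but it depends on Proposition~\ref{prop:vistoli} through the very definition of $\tilde{e}_\sigma$; your argument is more hands-on, avoids that input, and is in the same spirit as the paper's own proof of Lemma~\ref{lem:aux1}, where exactly this kind of double-coset decomposition of $\beta^\ast_\sigma\circ\beta_{\sigma,\ast}$ is carried out. Your dichotomy is sound: $g\sigma'g^{-1}\subseteq\sigma$ with $\sigma,\sigma'$ non-conjugate forces strict containment (a cyclic group has a unique subgroup of each order), and in the complementary case $g^{-1}\sigma g\cap\sigma'\subsetneq\sigma'$ is immediate.
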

\begin{proof}
Since the category $\MMot([X/G])$ is $R(G)$-linear, the morphism 
\begin{equation}
\label{eq:source:target}
\widetilde{\UU}([X^\sigma/\sigma])^{N(\sigma)}_{1/n}\xrightarrow{\pi_{\sigma'}\circ \gamma^\ast_{\sigma'} \circ \gamma_{\sigma,\ast} \circ \iota_\sigma} \widetilde{\UU}([X^{\sigma'}/\sigma'])^{N(\sigma')}_{1/n}
\end{equation}
is $R(G)_{1/n}$-linear. The idempotent $\tilde{e}_\sigma\in R(G)_{1/n}$ (see Notation \ref{not:idempotents}) acts as the identity on the source of \eqref{eq:source:target},
whereas the idempotent $\tilde{e}_{\sigma'}$ acts as the identity on the target. Therefore, since $\tilde{e}_\sigma\tilde{e}_{\sigma'}=0$, the homomorphism \eqref{eq:source:target} is zero. 
\end{proof}
We now claim that the following composition is invertible
\begin{equation}
\label{eq:factor2}
\widetilde{\UU}([X^\sigma/\sigma])^{N(\sigma)}_{1/n}\xrightarrow{\pi_{\sigma}\circ \gamma^\ast_\sigma \circ \gamma_{\sigma,\ast} \circ \iota_\sigma} \widetilde{\UU}([X^\sigma/\sigma])^{N(\sigma)}_{1/n}\,.
\end{equation}
Note that this implies that the composition $\bar{\theta}\circ\bar{\psi}'$ is invertible, and hence concludes the proof of Proposition \ref{prop:key}. In order to prove this claim, consider the factorization
$$\gamma_\sigma\colon [X^\sigma/\sigma] \xrightarrow{\alpha_\sigma} [X/\sigma] \xrightarrow{\beta_\sigma} [X/G]\,,$$ as well as the inclusion $\iota'$ and projection $\pi'$ morphisms:
\begin{eqnarray*}
\widetilde{\UU}([X/\sigma])^{N(\sigma)}_{1/n} \stackrel{\iota'_\sigma}{\too} \UU([X/\sigma])_{1/n} & & \UU([X/\sigma])_{1/n} \stackrel{\pi'_\sigma}{\too} \widetilde{\UU}([X/\sigma])^{N(\sigma)}_{1/n}\,.
\end{eqnarray*}
This data leads to the following commutative diagrams
\begin{equation}
\label{eq:bigcomdiag}
\xymatrix{
\widetilde{\UU}([X^\sigma/\sigma])_{1/n}^{N(\sigma)} \ar[r]^-{\iota_\sigma}\ar[d]_{\alpha_{\sigma,\ast}} & \UU([X^\sigma/\sigma])_{1/n}\ar[r]^-{\gamma_{\sigma,\ast}}\ar[d]_{\alpha_{\sigma,\ast}}& 
\UU([X/G])_{1/n} \ar@{=}[d] \\
 \widetilde{\UU}([X/\sigma])_{1/n}^{N(\sigma)}\ar[r]_{\iota'_\sigma}& \UU([X/\sigma])_{1/n}\ar[r]_{\beta_{\sigma,\ast}} & \UU([X/G])_{1/n}
}
\end{equation}
\begin{equation}\label{eq:bigcomdiag2}
\xymatrix{
\UU([X/G])_{1/n} \ar@{=}[d] \ar[r]^-{\gamma_\sigma^\ast}&  \UU([X^\sigma/\sigma])_{1/n}\ar[r]^-{\pi_\sigma}& \widetilde{\UU}([X^\sigma/\sigma])_{1/n}^{N(\sigma)}\\
\UU([X/G])_{1/n} \ar[r]_{\beta^\ast_\sigma} & \UU([X/\sigma])_{1/n}\ar[u]_{\alpha^\ast_\sigma}\ar[r]_{\pi'_\sigma}& \widetilde{\UU}([X/\sigma])_{1/n}^{N(\sigma)}
\ar[u]_{\alpha_\sigma^\ast}
}
\end{equation}
in the category of $G_0$-motives $\MMot([X/G])_{1/n}$; the existence of the left-hand side, resp. right-hand side, square in \eqref{eq:bigcomdiag}, resp. \eqref{eq:bigcomdiag2}, follows from the $R(G)$-linearity of $\MMot([X/G])$ and from the fact that $\alpha_\sigma$ commutes
with the $N(\sigma)$-actions on $[X^\sigma/\sigma]$ and $[X/\sigma]$
(as objects of $\DM([X/G])$) introduced in Example \ref{ex:action}.
\begin{lemma}\label{lem:aux1}
For every $\sigma \in \varphi$, the composition $\pi'_\sigma \circ \beta^\ast_\sigma \circ \beta_{\sigma,\ast} \circ \iota'_\sigma$ is equal to $[N(\sigma):\sigma]\cdot \id$. In particular, it is invertible.
\end{lemma}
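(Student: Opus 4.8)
The plan is to compute $\beta^\ast_\sigma\circ\beta_{\sigma,\ast}$ as an explicit endomorphism of $\UU([X/\sigma])_{1/n}$ by a double-coset decomposition, and then to restrict it to the summand $\widetilde{\UU}([X/\sigma])^{N(\sigma)}_{1/n}$. First I would identify the fibre product $[X/\sigma]\times_{[X/G]}[X/\sigma]$: writing $\sigma_g:=\sigma\cap g\sigma g^{-1}$, it is the disjoint union $\coprod_{[g]\in\sigma\backslash G/\sigma}[X/\sigma_g]$, where the two projections $p^g_1,p^g_2\colon[X/\sigma_g]\to[X/\sigma]$ are induced, respectively, by $(\mathrm{id}_X,\sigma_g\hookrightarrow\sigma)$ and by a ``twist by $g$'' (both being morphisms in $\DM([X/G])$). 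Using the pull-back/push-forward composition law of $\MMot([X/G])$ --- concretely, that $\beta_{\sigma,\ast}$, resp.\ $\beta^\ast_\sigma$, is represented by the structure sheaf of $[X/\sigma]$ in the relevant $\times_{BG}$-product, and that these sheaves are Tor-independent over $[X/G]$ (which uses $1/n\in k$, all the maps in sight being finite \'etale) --- one obtains the ``Mackey formula''
\begin{equation*}
\beta^\ast_\sigma\circ\beta_{\sigma,\ast}\;=\;\sum_{[g]\in\sigma\backslash G/\sigma}(p^g_2)_\ast\circ(p^g_1)^\ast
\end{equation*}
in $\End_{\MMot([X/G])_{1/n}}(\UU([X/\sigma])_{1/n})$, each summand factoring through $\UU([X/\sigma_g])_{1/n}$.

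Next I would evaluate each summand on $\widetilde{\UU}([X/\sigma])^{N(\sigma)}_{1/n}$. For $g\in N(\sigma)$ one has $\sigma_g=\sigma$, $p^g_1=\mathrm{id}$, and $(p^g_2)_\ast\circ(p^g_1)^\ast=(\alpha_g)_\ast$, where $\alpha_g$ is the automorphism of $[X/\sigma]$ attached to $g$ in Example \ref{ex:action}; the double cosets meeting $N(\sigma)$ are exactly the $[N(\sigma):\sigma]$ cosets of $\sigma$ in $N(\sigma)$, and since the $N(\sigma)$-action on $\UU([X/\sigma])$ factors through $N(\sigma)/\sigma$ and acts as the identity on the invariant summand, their contributions add up to $[N(\sigma):\sigma]\cdot\id$ there. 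For $g\notin N(\sigma)$ the subgroup $\sigma_g\subsetneq\sigma$ is proper (as $\sigma$ and $g\sigma g^{-1}$ are distinct subgroups of the same finite order), and $(p^g_1)^\ast\colon\UU([X/\sigma])_{1/n}\to\UU([X/\sigma_g])_{1/n}$ is a morphism of modules over $K_0([X/\sigma])$, the latter acting on the target through the restriction $(p^g_1)^\ast\colon K_0([X/\sigma])\to K_0([X/\sigma_g])$; since this restriction carries the idempotent $e_\sigma$ to the image of $\mathrm{Res}^\sigma_{\sigma_g}(e_\sigma)=0$ (Lemma \ref{lem:restriction}), the morphism $(p^g_1)^\ast$ annihilates $\widetilde{\UU}([X/\sigma])_{1/n}=e_\sigma\UU([X/\sigma])_{1/n}$, and hence so does the whole summand.

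Putting the two cases together, $\pi'_\sigma\circ\beta^\ast_\sigma\circ\beta_{\sigma,\ast}\circ\iota'_\sigma=[N(\sigma):\sigma]\cdot\id$ on $\widetilde{\UU}([X/\sigma])^{N(\sigma)}_{1/n}$, which is invertible because $[N(\sigma):\sigma]$ divides $n$ and is therefore a unit in $\bbZ[1/n]$. The step I expect to be the main obstacle is the Mackey formula: one must justify the decomposition of $[X/\sigma]\times_{[X/G]}[X/\sigma]$ and the Tor-independence statement that makes the composite correspondence equal, in $G_0$, to the class of $\cO_{[X/\sigma]\times_{[X/G]}[X/\sigma]}$, and one must record the $K_0$-linearity of pull-back of $G_0$-motives --- a compatibility of the same flavour as Lemma \ref{lem:compat} and \S\ref{rk:compatibility}.
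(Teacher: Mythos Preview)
Your proposal is correct and follows essentially the same line as the paper's proof: a double-coset (Mackey) decomposition of $\beta^\ast_\sigma\circ\beta_{\sigma,\ast}$, the vanishing of the terms with $g\notin N(\sigma)$ via Lemma~\ref{lem:restriction}, and the identification of the $[N(\sigma):\sigma]$ surviving terms with the $N(\sigma)/\sigma$-action, which is trivial on invariants.

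The one difference worth recording is that the paper does not carry out the Mackey computation for general $X$. Instead it invokes the pull-back functor $\MMot(BG)\to\MMot([X/G])$ of \S\ref{rk:group}, which sends $\UU(B\sigma)\mapsto\UU([X/\sigma])$ and is compatible with $\beta_{\sigma,\ast}$, $\beta^\ast_\sigma$, and the $R(\sigma)$-action, to reduce immediately to the case $X=\bullet$. There the fibre product $B\sigma\times_{BG}B\sigma=\amalg_{\tau}[\bullet/(\sigma\cap\tau\sigma\tau^{-1})]$ is the classical double-coset formula, and no Tor-independence discussion is needed. Your direct approach works as well (the decomposition $[X/\sigma]\times_{[X/G]}[X/\sigma]\simeq\amalg_{[g]}[X/\sigma_g]$ is precisely the pull-back of the $BG$ case along $[X/G]\to BG$), but the paper's reduction neatly sidesteps what you flagged as the main obstacle.
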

\begin{proof}
  Thanks to the pull-back functor $\MMot(BG)\to \MMot([X/G])$ (see
  \S\ref{rk:group}), it suffices to prove Lemma \ref{lem:aux1} in
  the particular case where $X=\bullet$. Let us write $\cH$ for a
  (chosen) set of representatives of the double cosets
  $\sigma \backslash G /\sigma$. The fiber product
  $[\bullet/\sigma]\times_{[\bullet/G]} [\bullet/\sigma]$ decomposes
  into the disjoint union
  $\amalg_{\tau \in \cH} [\bullet/(\sigma \cap \tau \sigma
  \tau^{-1})]$. By unpacking the definitions, we observe that the composition
\begin{equation}\label{eq:composition11}
\UU([\bullet/\sigma])\stackrel{\beta_{\sigma,\ast}}{\too} \UU([\bullet/G])\stackrel{\beta_{\sigma}^\ast}{\too} \UU([\bullet/\sigma])\,,
\end{equation}
is the sum over $\tau\in \cH$ of the compositions
\begin{equation}
\label{eq:disect}
\UU([\bullet/\sigma])\xrightarrow{\mu^\ast} \UU([\bullet/(\sigma \cap \tau \sigma \tau^{-1})])
\xrightarrow{\nu_\ast} \UU([\bullet/\sigma])\,,
\end{equation}
where
$\mu\colon[\bullet/(\sigma \cap \tau \sigma \tau^{-1})]\r [\bullet/\sigma]$ corresponds to the inclusion $\sigma\cap\tau\sigma\tau^{-1}\subseteq\sigma$ and
$\nu\colon[\bullet/(\sigma \cap \tau \sigma \tau^{-1})]\r [\bullet/\sigma]$ to the inclusion
$\sigma \cap \tau \sigma \tau^{-1}\xrightarrow{\tau^{-1}\cdot
  \tau}\sigma$; note that, as explained in  \S\ref{rk:normalizer}, $\nu$ is a morphism in $\DM([\bullet/G])$.

If $\tau\not\in N(\sigma)$, then $e_\sigma\in R(\sigma)_{1/n}$ acts on $\UU([\bullet/(\sigma \cap \tau \sigma \tau^{-1})])_{1/n}$
by it its image in $R(\sigma \cap \tau \sigma \tau^{-1})_{1/n}$. Hence, thanks to Lemma \ref{lem:restriction}, $e_\sigma$ acts as zero. This implies that the pre-composition (of the $\bbZ[1/n]$-linearization) of \eqref{eq:disect} with the inclusion $\widetilde{\UU}([\bullet/\sigma])_{1/n}\hookrightarrow 
\UU([\bullet/\sigma])_{1/n}$ is zero. Therefore, in the evaluation of $\pi'_\sigma \circ \beta^\ast_\sigma \circ \beta_{\sigma,\ast} \circ \iota'_\sigma$, we
only have to consider the terms in \eqref{eq:disect} where $\tau\in N(\sigma)$.

If $\tau\in N(\sigma)$, then $\mu=\id$ and $\nu=\alpha_{\tau^{-1}}$ as defined in \S\ref{rk:normalizer}. Therefore, \eqref{eq:disect} agrees with the action of $\tau^{-1}\in N(\sigma)$ on $\UU([\bullet/\sigma])_{1/n}$. In particular, it induces the identity on the direct summand $\UU([\bullet/\sigma])^{N(\sigma)}_{1/n}$. Now, the number of terms \eqref{eq:disect} where $\tau\in N(\sigma)$ is equal to $[N(\sigma):\sigma]$. This concludes the proof.
\end{proof}
Using Lemma \ref{lem:aux1} and the commutative diagrams \eqref{eq:bigcomdiag}-\eqref{eq:bigcomdiag2}, we observe that in order to prove our claim it suffices to prove that
the composition
\[
\widetilde{\UU}([X^\sigma/\sigma])_{1/n}^{N(\sigma)}\xrightarrow{\alpha_{\sigma,\ast}} \widetilde{\UU}([X/\sigma])_{1/n}^{N(\sigma)} \xrightarrow{\alpha^\ast_{\sigma}} \widetilde{\UU}([X^\sigma/\sigma])_{1/n}^{N(\sigma)}
\]
is invertible. The preceding composition can be re-written as $\pi_\sigma \circ \alpha^\ast_\sigma \circ \alpha_{\sigma,\ast} \circ \iota_\sigma$. Therefore, the proof of our claim follows now automatically from the next result:
\begin{lemma}\label{lem:aux2}
For every $\sigma \in \varphi$, the composition $\pi_\sigma \circ \alpha^\ast_\sigma \circ \alpha_{\sigma,\ast} \circ \iota_\sigma$ is invertible.
\end{lemma}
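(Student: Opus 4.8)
The plan is to recognise the composition $\pi_\sigma\circ\alpha^\ast_\sigma\circ\alpha_{\sigma,\ast}\circ\iota_\sigma$ as multiplication by a $K$-theoretic Euler class on the primitive part, and then to show that this class is invertible. First I would note that $\alpha_\sigma\colon[X^\sigma/\sigma]\hookrightarrow[X/\sigma]$ is a regular closed immersion of smooth Deligne--Mumford stacks whose conormal bundle is the pull-back of the $\sigma$-equivariant conormal bundle $N^\vee:=N^\vee_{X^\sigma/X}$ on $X^\sigma$. The self-intersection formula for such an immersion (an excess-intersection computation) then identifies $\alpha^\ast_\sigma\circ\alpha_{\sigma,\ast}\colon\UU([X^\sigma/\sigma])\to\UU([X^\sigma/\sigma])$ with the image of the class $\lambda_{-1}(N^\vee):=\sum_i(-1)^i[\wedge^iN^\vee]\in K_0([X^\sigma/\sigma])$ under the $K_0$-action of \S\ref{rk:K0action}. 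Since $N^\vee$ is canonically attached to $X^\sigma\subseteq X$, this class is $N(\sigma)$-invariant; and since the $K_0$-action is a ring homomorphism compatible with both the $N(\sigma)$-action and the idempotent $e_\sigma$, multiplication by $\lambda_{-1}(N^\vee)$ preserves the summand $\widetilde{\UU}([X^\sigma/\sigma])^{N(\sigma)}_{1/n}$ and restricts there to multiplication by $e_\sigma\cdot\lambda_{-1}(N^\vee)$. Thus $\pi_\sigma\circ\alpha^\ast_\sigma\circ\alpha_{\sigma,\ast}\circ\iota_\sigma$ is exactly this multiplication, and the Lemma reduces to showing that $e_\sigma\cdot\lambda_{-1}(N^\vee)$ is a unit in $K_0(X^\sigma)_{1/n}\otimes_{\bbZ[1/n]}\widetilde{R}(\sigma)_{1/n}$ --- indeed, multiplying by its inverse (which is automatically $N(\sigma)$-invariant) produces the desired two-sided inverse.

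To establish this invertibility I would argue as follows. Since $\sigma$ acts trivially on $X^\sigma$, Proposition \ref{prop:generator} gives $K_0([X^\sigma/\sigma])\simeq K_0(X^\sigma)\otimes_\bbZ R(\sigma)$, and because $X^\sigma$ is the \emph{whole} fixed locus (and $1/|\sigma|\in k$), the equivariant bundle $N$ has no non-zero invariant part. Invertibility may be tested after the faithfully flat base change to the field $l$ obtained from $k$ by adjoining the $n^{\mathrm{th}}$ roots of unity (using Proposition \ref{prop:galois} to identify the two sides), over which $N=\bigoplus_{\chi\neq1}N_\chi$ splits into eigenbundles. By the concrete description of the primitive part in Proposition \ref{prop:concrete}, one has $\big(K_0(X^\sigma)\otimes\widetilde{R}(\sigma)\big)_l\simeq\bigoplus_{s\in\mathrm{gen}(\sigma)}K_0(X^\sigma)_l$, and under this identification the $s$-component of $e_\sigma\cdot\lambda_{-1}(N^\vee)$ is the value at $s$ of the equivariant Euler class, $\prod_{\chi\neq1}\lambda_{-\chi(s)^{-1}}\!\big((N_\chi^0)^\vee\big)$, whose rank on each connected component of $X^\sigma$ equals $\prod_{\chi\neq1}(1-\chi(s)^{-1})^{\rk N_\chi}$. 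Since $s$ generates $\sigma$ we have $\chi(s)\neq1$ for every $\chi\neq1$, and each factor $1-\chi(s)^{-1}$ divides $|\sigma|$ in $\bbZ[\mu_n]$ (it is one of the factors in $\prod_{\zeta^{|\sigma|}=1,\,\zeta\neq1}(1-\zeta)=|\sigma|$), hence is a unit in $l$; therefore the rank of $e_\sigma\cdot\lambda_{-1}(N^\vee)$ is a unit.

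It remains to pass from ``unit rank'' to ``unit in $K_0(X^\sigma)_{1/n}$'', i.e.\ to know that the augmentation ideal of $K_0(X^\sigma)_{1/n}$ is nil, so that $1+\widetilde{K}_0(X^\sigma)_{1/n}\subseteq K_0(X^\sigma)_{1/n}^\times$. For quasi-compact quasi-separated $X$ this is precisely the content of the relevant self-intersection statement of Vistoli \cite{Vistoli}, which, as noted in Remark \ref{rk:ample}, remains valid in that generality; the general (possibly non-quasi-compact) case is reduced to it by the nilpotency results of the paper (cf.\ Corollary \ref{cor:nilp} and its surroundings). Combining the two halves finishes the proof. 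The delicate point is the first step --- making the self-intersection formula precise \emph{inside} $\MMot([X/G])$, i.e.\ checking that the composition law \eqref{eq:assignment} on triple fibre products over $BG$ really produces $\lambda_{-1}(N^\vee)$ acting through \S\ref{rk:K0action}, and in particular that the composite is supported on the diagonal component of $[X^\sigma/\sigma]\times_{[X/G]}[X^\sigma/\sigma]$ rather than on the spurious ones --- together with the bookkeeping disposing of the non-quasi-compactness of $X$. A convenient way to handle the former is to carry out the computation first in $\MMot([X/\sigma])$, where the relevant fibre product is simply $[X^\sigma/\sigma]$ and the self-intersection formula is classical, and then transport it along the push-forward functor $\MMot([X/\sigma])\to\MMot([X/G])$ of \S\ref{rk:base}.
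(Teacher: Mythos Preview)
Your strategy coincides with the paper's: reduce to $G=\sigma$ via the push-forward functor of \S\ref{rk:base} (which you correctly flag at the end as the device that makes the self-intersection formula clean), identify $\alpha^\ast_\sigma\alpha_{\sigma,\ast}$ with multiplication by $\sum_i(-1)^i[\wedge^i(I/I^2)]=\lambda_{-1}(N^\vee)$ in $K_0([X^\sigma/\sigma])$, and then show that the image $\xi$ of this class in $e_\sigma K_0([X^\sigma/\sigma])_{1/n}$ is a unit by passing to the rank after adjoining the $n^{\mathrm{th}}$ roots of unity. Your explicit eigenbundle computation of the rank is exactly the content of Vistoli's Lemma~(1.8), which the paper simply cites.

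Two places where your write-up needs tightening. First, Proposition~\ref{prop:generator} does \emph{not} give $K_0([X^\sigma/\sigma])\simeq K_0(X^\sigma)\otimes_\bbZ R(\sigma)$ over a general $k$: a Morita equivalence of dg categories does not force $K_0$ to split as a tensor product unless $k[\sigma]$ is already a product of copies of $k$ (i.e.\ unless $k$ contains the $n^{\mathrm{th}}$ roots of unity). The paper avoids this by working directly in $e_\sigma K_0([X^\sigma/\sigma])_{1/n}$ and only invoking the tensor decomposition (Proposition~\ref{prop:iso}) \emph{after} base change to $l$. Second, and more substantively, the passage of invertibility from $l$ back to $k$ is not ``faithfully flat base change'', and Proposition~\ref{prop:galois} is not the relevant tool. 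The paper's mechanism is: the kernels of both the base-change map $e_\sigma K_0([X^\sigma/\sigma])_{1/n}\to e_\sigma K_0([X^\sigma_l/\sigma])_{1/n}$ and the subsequent rank map consist of nilpotents (by Corollary~\ref{cor:nilp} and \cite[Thm.~2.3]{Azumaya} respectively), and then a short Noetherian-module argument shows that the inverse $\zeta$ of the image $\bar\xi$ is an integral polynomial in $\bar\xi$ with $\bbZ[1/n]$-coefficients, hence lies in the image of the composed map. Any preimage $\eta$ then satisfies $\xi\eta=1+(\text{nilpotent})$, so $\xi$ is a unit. Your citation of Corollary~\ref{cor:nilp} is apt, but you invoke it for a spurious non-quasi-compactness issue rather than for this descent, which is where it is actually needed.
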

\begin{proof}
Consider the following factorizations:
\[
\iota_\sigma \colon \widetilde{\UU}([X^\sigma/\sigma])_{1/n}^{N(\sigma)}\xrightarrow{\iota_{\sigma,1}} \widetilde{\UU}([X^\sigma/\sigma])_{1/n} \xrightarrow{\iota_{\sigma,2}}  \UU([X^\sigma/\sigma])_{1/n}
\]
\[
 \pi_\sigma\colon  \UU([X^\sigma/\sigma])_{1/n}   \xrightarrow{\pi_{\sigma,2}}      \widetilde{\UU}([X^\sigma/\sigma])_{1/n}       \xrightarrow{\pi_{\sigma,1}}         \widetilde{\UU}([X^\sigma/\sigma])_{1/n}^{N(\sigma)}\,.
\]
Since $\pi_\sigma \circ \alpha^\ast_\sigma \circ \alpha_{\sigma,\ast} \circ \iota_\sigma=(\pi_{\sigma,2} \circ \alpha^\ast_\sigma \circ \alpha_{\sigma,\ast} \circ \iota_{\sigma,2})^{N(\sigma)}$, 
it suffices to prove that the composition $\pi_{\sigma,2} \circ \alpha^\ast_\sigma \circ \alpha_{\sigma,\ast} \circ \iota_{\sigma,2}$ is invertible. Moreover, thanks to the push-forward functor $\MMot([X/\sigma]) \to \MMot([X/G])$ (see \S \ref{rk:base}), it is enough to consider the particular case where $G=\sigma$. In this case, we have $\pi_{\sigma,2}=\pi_\sigma$ and $\iota_{\sigma,2}=\iota_\sigma$. Moreover, we have a ring isomorphism (see \S\ref{sec:G0motives})
\begin{equation}\label{eq:ring-iso}
\End_{\MMot([X/\sigma])}(\UU([X^\sigma/\sigma]) \simeq G_0([X^\sigma/\sigma]\times_{[X/\sigma]}[X^\sigma/\sigma])\simeq K_0([X^\sigma/\sigma])\,.
\end{equation}
Furthermore, under the ring isomorphism \eqref{eq:ring-iso}, the composition 
\begin{equation}\label{eq:composition}
\UU([X^\sigma/\sigma])\stackrel{\alpha_{\sigma,\ast}}{\too} \UU([X/\sigma])\stackrel{\alpha^\ast_\sigma}{\too} \UU([X^\sigma/\sigma])\,,
\end{equation}
corresponds to  the following Grothendieck class
\begin{equation*}
[\cO_{[X^\sigma/\sigma]} \otimes^{\bf L}_{X/\sigma} \cO_{[X^\sigma/\sigma]}]=\sum_i (-1)^i [H^i (\cO_{X^\sigma}\otimes^{\bf L}_{X/\sigma} \cO_{X^\sigma})]= \sum_i (-1)^i [\wedge^i (I/I^2)]\,,
\end{equation*}
where $I$ stands for the sheaf of ideals associated to the closed immersion $X^\sigma \hookrightarrow~X$.
By composing and pre-composing (the $\bbZ[1/n]$-linearization of) \eqref{eq:composition} with $\pi_\sigma$ and $\iota_\sigma$, 
respectively, we obtain the image $\xi$ of the above Grothendieck class in the direct summand $e_\sigma K_0([X^\sigma/\sigma])_{1/n}$. In order to prove that $\xi$ is invertible, consider the field $l$ obtained from $k$ by adjoining the $n^{\mathrm{th}}$ roots of unity, and the homomorphism
\begin{equation}\label{eq:induced1}
e_\sigma K_0([X^\sigma/\sigma])_{1/n} \too e_\sigma K_0([X^\sigma_l/\sigma])_{1/n}\,.
\end{equation}
Thanks to Proposition \ref{prop:iso} below (with $X=X_l$), the right-hand side of \eqref{eq:induced1} is isomorphic to $K_0(X^\sigma_l)_{1/n} \otimes_{\bbZ[1/n]} \widetilde{R}_l(\sigma)_{1/n}$. Let us write $\{X^\sigma_{l,i}\}_{i \in I}$ for the connected components of $X^\sigma_l$. Under these notations, we have also the rank map:
\begin{equation}\label{eq:induced2} 
e_\sigma K_0([X^\sigma_l/\sigma])_{1/n} \simeq \bigoplus_{i\in I} K_0(X^\sigma_{l,i})_{1/n} \otimes_{\bbZ[1/n]} \widetilde{R}_l(\sigma)_{1/n}\xrightarrow{\text{rank}}\bigoplus_{i\in I} \widetilde{R}_l(\sigma)_{1/n}\,.
\end{equation}
Let us denote by $\overline{\xi}$ the image of $\xi$ under the composition \eqref{eq:induced2}$\circ$\eqref{eq:induced1}. As proved in \cite[Lem.~(1.8)]{Vistoli} (with the caveat of Remark \ref{rk:ample}), the image of $\xi$ under the homomorphism \eqref{eq:induced1} is invertible. Consequently, $\overline{\xi}$ is also invertible. Thanks to Corollary \ref{cor:nilp} (with $X=X^\sigma$ and $G=\sigma$), resp. \cite[Thm.\ 2.3]{Azumaya}, the elements in the kernel of the homomorphism \eqref{eq:induced1}, resp. \eqref{eq:induced2}, are nilpotent. Therefore, in order to prove that $\xi$ is invertible, it suffices to show that the inverse $\zeta$ of $\overline{\xi}$ belongs to the image of the composition \eqref{eq:induced2}$\circ$\eqref{eq:induced1}. Since $\bigoplus_{i\in I} \widetilde{R}_l(\sigma)_{1/n}$ is a Noetherian $\bbZ[1/n]$-module, there exists an integer $N\geq 1$ such that $\zeta^N+b_1 \zeta^{N-1}+\cdots+b_N=0$ with $b_i \in \bbZ[1/n]$. By multiplying this equality with $\bar{\xi}^N$, we hence conclude that
$$1+b_1\bar{\xi}+\cdots +b_N\bar{\xi}^N=1+(b_1+\cdots +b_N\bar{\xi}^{N-1})\bar{\xi}=0\,.$$
This shows that the inverse $\zeta=-(b_1+\cdots +b_N\bar{\xi}^{N-1})$ of $\overline{\xi}$ belongs, indeed, to the image of the composition \eqref{eq:induced2}$\circ$\eqref{eq:induced1}.
\end{proof}
\subsection*{Proof of Corollary \ref{cor:main1}}
We start with some computations:
%
%
\begin{proposition}\label{prop:iso} Let $\sigma\in \varphi$ be a cyclic subgroup. If $k$ contains the $n^{\mathrm{th}}$ roots of unity, then we have the following isomorphism of (commutative) monoids:
\begin{eqnarray}\label{eq:action6}
\widetilde{\U}([X^\sigma/\sigma])_{1/n}\simeq \U(X^\sigma)_{1/n} \otimes_{\bbZ[1/n]} \widetilde{R}(\sigma)_{1/n}\,.
\end{eqnarray}
%
\end{proposition}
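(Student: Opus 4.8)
The plan is to reduce the statement to the classifying stack $B\sigma:=[\bullet/\sigma]$, exploiting that $\sigma$ acts trivially on $X^\sigma$, and then to decompose $\U(B\sigma)$ by character theory.

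First I would observe that, since $\sigma$ acts trivially on the smooth separated (hence quasi-compact quasi-separated) $k$-scheme $X^\sigma$, we have $[X^\sigma/\sigma]=X^\sigma\times B\sigma$, so Corollary \ref{cor:monoids} yields an isomorphism of commutative monoids $\U([X^\sigma/\sigma])_{1/n}\simeq \U(X^\sigma)_{1/n}\otimes\U(B\sigma)_{1/n}$ in $\NMot(k)_{1/n}$ (the tensor being that of the symmetric monoidal category $\NMot(k)_{1/n}$). Moreover, the morphism $[X^\sigma/\sigma]\to B\sigma$ of §\ref{sec:action} is the projection onto the second factor; hence the ring homomorphism $R(\sigma)\to K_0([X^\sigma/\sigma])$ factors through $R(\sigma)=K_0(B\sigma)$, and under the displayed isomorphism the canonical $R(\sigma)$-action on $\U([X^\sigma/\sigma])_{1/n}$ corresponds to the $R(\sigma)$-action on the tensor factor $\U(B\sigma)_{1/n}$ (acting as the identity on $\U(X^\sigma)_{1/n}$).

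Next I would compute $\U(B\sigma)_{1/n}$ together with its $R(\sigma)$-action. Since $1/n\in k$ and $k$ contains the $n^{\mathrm{th}}$ roots of unity, decomposition into isotypic components identifies the symmetric monoidal dg category $\perf_\dg(B\sigma)=\mathrm{Rep}_k(\sigma)$ with that of $\sigma^\vee$-graded perfect complexes of $k$-vector spaces (graded tensor product), the simple object $\chi\in\sigma^\vee$ sitting in degree $\chi$. As $\U$ is symmetric monoidal and, being additive, sends finite direct sums to finite direct sums, this gives an isomorphism of commutative monoids $\U(B\sigma)_{1/n}\simeq \U(k)_{1/n}\otimes_{\bbZ[1/n]}R(\sigma)_{1/n}$, where on the right $R(\sigma)_{1/n}\simeq\bbZ[1/n][\sigma^\vee]$ is viewed as a finitely generated free $\bbZ[1/n]$-algebra and $-\otimes_{\bbZ[1/n]}-$ is the action of §\ref{sec:intro}; under this identification the $R(\sigma)$-action is multiplication on the $R(\sigma)_{1/n}$-factor, which by §\ref{sec:action} is exactly the canonical one. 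Combining this with the previous paragraph, the unit isomorphism $\U(X^\sigma)_{1/n}\otimes\U(k)_{1/n}\simeq\U(X^\sigma)_{1/n}$, and the compatibility of the module-action $-\otimes_{\bbZ[1/n]}-$ with the tensor $-\otimes-$, I obtain an $R(\sigma)_{1/n}$-equivariant isomorphism of commutative monoids $\U([X^\sigma/\sigma])_{1/n}\simeq\U(X^\sigma)_{1/n}\otimes_{\bbZ[1/n]}R(\sigma)_{1/n}$, the action again being multiplication on $R(\sigma)_{1/n}$.

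Finally I would apply the idempotent $e_\sigma\in R(\sigma)_{1/n}$. Since $k$ contains the $n^{\mathrm{th}}$ roots of unity, $e_\sigma$ corresponds under the character isomorphism to $e_{\mathrm{prim}}$, so by \eqref{eq:primdef} the direct summand $\widetilde R(\sigma)_{1/n}=e_\sigma R(\sigma)_{1/n}\cong\bbZ[1/n][t]/(\Phi_{|\sigma|}(t))$ is a finitely generated free $\bbZ[1/n]$-module (of rank $\phi(|\sigma|)$) and a direct summand of $R(\sigma)_{1/n}$ both as a $\bbZ[1/n]$-module and as a ring; in particular $\U(X^\sigma)_{1/n}\otimes_{\bbZ[1/n]}\widetilde R(\sigma)_{1/n}$ is defined and carries the commutative monoid structure induced by the ring structure of $\widetilde R(\sigma)_{1/n}$. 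Cutting both sides of the isomorphism of the previous paragraph by $e_\sigma$, and using that $e_\sigma$ acts only on the $R(\sigma)_{1/n}$-factor and that $-\otimes_{\bbZ[1/n]}-$ is additive, hence commutes with idempotent splittings, gives $\widetilde\U([X^\sigma/\sigma])_{1/n}=e_\sigma\cdot\U([X^\sigma/\sigma])_{1/n}\simeq\U(X^\sigma)_{1/n}\otimes_{\bbZ[1/n]}\widetilde R(\sigma)_{1/n}$ as commutative monoids, which is \eqref{eq:action6}. The one point requiring genuine care — and the main "obstacle" — is purely the bookkeeping: tracking the several monoid structures and the $R(\sigma)$-action simultaneously, and verifying that the tensor of $\NMot(k)_{1/n}$ and the module-action $-\otimes_{\bbZ[1/n]}-$ of §\ref{sec:intro} interact as expected; the geometric input (the trivial-action decomposition and the splitting of $\mathrm{Rep}_k(\sigma)$) is immediate from Corollary \ref{cor:monoids} and character theory.
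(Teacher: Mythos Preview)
Your proposal is correct and follows essentially the same route as the paper: reduce to $B\sigma$ via Corollary~\ref{cor:monoids}, decompose $\U(B\sigma)$ using that $k[\sigma]$ splits as a product of copies of $k$ (character theory), track the $R(\sigma)$-action, and cut down by $e_\sigma$. The only cosmetic difference is that the paper first writes the map $\U(k)\otimes_{\bbZ}R(\sigma)\to\U(B\sigma)$ coming from the action and then verifies it is an isomorphism via the enriched Yoneda lemma (checking on $\Hom(\U(k),-)$), whereas you obtain the isomorphism directly from the isotypic decomposition of $\perf_\dg(B\sigma)$; these are equivalent phrasings of the same computation.
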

\begin{proof}
Recall from Remark \ref{rk:Homs} that we have the following isomorphism
$$ \Hom_{\NMot(k)}(\U(k),\U(B\sigma))\simeq K_0(B\sigma)=R(\sigma)\,.$$
This leads naturally to a $R(\sigma)$-action on $\U(B\sigma)$, \ie to a morphism of monoids
\begin{equation}
\label{eq:action4}
\U(k)\otimes_\bbZ R(\sigma) \too \U(B\sigma)\,.
\end{equation}
Note that by applying $\Hom_{\NMot(k)}(\U(k),-)$ to \eqref{eq:action4}, we obtain an isomorphism. Therefore, thanks to the enriched Yoneda lemma, in order to show that \eqref{eq:action4} is an isomorphism, it suffices to show that $\U(B\sigma)$ is isomorphic to a (finite) direct sum of copies of $\U(k)$. This is, indeed, the fact because $\U(B\sigma)\simeq \U(k[\sigma])$ and $k[\sigma]$ is isomorphic to a (finite) direct sum of copies of $k$ (this uses the assumption that $k$ contains the $n^{\mathrm{th}}$ roots of unity). Now, note that the cyclic subgroup $\sigma \in \varphi$ acts trivially on $X^\sigma$. By combining Corollary \ref{cor:monoids} (with $X=X^\sigma$ and $G=\sigma$) with \eqref{eq:action4}, we then obtain an isomorphism of (commutative) monoids:
\begin{equation}\label{eq:action5}
\U([X^\sigma/\sigma])\simeq\U(X^\sigma) \otimes \U(k) \otimes_{\bbZ[1/n]} R(\sigma) \simeq \U(X^\sigma) \otimes_{\bbZ[1/n]}R(\sigma)\,.
\end{equation}
Under the isomorphism \eqref{eq:action5}, the canonical $R(\sigma)$-action on $\U([X^\sigma/\sigma])$ (described in \S\ref{sec:action}) corresponds to the tautological $R(\sigma)$-action on $R(\sigma)$. Therefore, the above isomorphism of (commutative) monoids \eqref{eq:action6} is obtained from \eqref{eq:action5} by applying $(-)_{1/n}$ and then by taking the direct summands corresponding to $e_\sigma \in \widetilde{R}(\sigma)_{1/n}$.
\end{proof}
Item (i) of Corollary \ref{cor:main1} follows automatically from the combination of Proposition \ref{prop:iso} with the equivalences of categories \eqref{eq:equivalence1}-\eqref{eq:equivalence2}. Let us now prove item (ii). Thanks to the assumption on the additive invariant $E$ and to Proposition \ref{prop:concrete}, we have the following isomorphisms
$$
E(X^\sigma)\otimes_{\bbZ[1/n]} \widetilde{R}(\sigma)_{1/n}\simeq E(X^\sigma)\otimes_l \widetilde{R}(\sigma)_l\simeq E(X^\sigma) \otimes_l \Map(\mathrm{gen}(\sigma), l)\,.
$$
Therefore, the right-hand side of \eqref{eq:formula-main2} reduces to
\begin{eqnarray}
\bigoplus_{\sigma \in \varphi\!/\!\sim} (E(X^\sigma) \otimes_l \Map(\mathrm{gen}(\sigma),l))^{N(\sigma)} & \simeq & (\bigoplus_{\sigma \in \varphi} E(X^\sigma) \otimes_l \Map(\mathrm{gen}(\sigma),l))^G \nonumber \\
&\simeq & (\bigoplus_{g \in G} E(X^g))^G\,, \label{eq:star-1}
\end{eqnarray}
where \eqref{eq:star-1} follows from the fact that $G=\amalg_{\sigma\in \varphi} \mathrm{gen}(\sigma)$ and $X^{\langle g\rangle}=X^g$. This concludes the proof of item (ii) and, consequently, of Corollary \ref{cor:main1}.
%
\subsection*{Proof of Corollary \ref{cor:main2}}
Note that $\sigma$ acts trivially on $X^\sigma$. Therefore, thanks to Corollary \ref{cor:monoids} (with $X=X^\sigma$ and $G=\sigma$), we have an isomorphism of (commutative) monoids between $\U([X^\sigma/\sigma])$ and $\U(X^\sigma) \otimes \U(B\sigma)$. Under this isomorphism, the canonical $R(\sigma)$-action on $\U([X^\sigma/\sigma])$ (described in \S\ref{sec:action}) corresponds to the tautological $R(\sigma)$-action on $\U(B\sigma)$. Consequently, by applying $(-)_{1/n}$ and then taking the direct summands corresponding to $e_\sigma \in \widetilde{R}(\sigma)_{1/n}$, we obtain an induced isomorphism of (commutative) monoids between $\widetilde{\U}([X^\sigma/\sigma])_{1/n}$ and $\U(X)_{1/n} \otimes \widetilde{\U}(B\sigma)_{1/n}$. Finally, since the additive invariant $E$ is monoidal, the equivalence of categories \eqref{eq:equivalence2} leads to an isomorphism of (commutative) monoids between $\widetilde{E}([X^\sigma/\sigma])$ and $E(X^\sigma) \otimes \widetilde{E}(B\sigma)$. This finishes the proof.
\section{Proofs: smooth quotients}
In this section we prove Theorem \ref{thm:main2}. In order to simplify the exposition, let us write $Y$ for the coarse moduli space $X/\!\!/G$; see \cite{KM1}. We start with some reductions. Firstly, we may (and will) assume that $X$ is connected. 
Secondly, we can assume without loss of generality that $G$ acts generically free; otherwise, simply replace $G$ by $G/N$ where $N$ stands for the generic stabilizer of $X$. Following \S\ref{rk:pullback}, consider the induced pull-back $\pi^\ast\colon \UU(Y)_{1/n} \to \UU(X)_{1/n}$ and push-forward $\pi_\ast\colon \UU(X)_{1/n} \to \UU(Y)_{1/n}$ morphisms in the category\footnote{Note that since $Y$ may be an
    algebraic space, we are not necessarily in the setting of
    \S\ref{sec:G0motives} (with trivial $G$). However, the generalization of \S\ref{sec:G0motives} to
    algebraic spaces is purely formal.} $\MMot(Y)_{1/n}$. The proof will consist on showing that $\pi^\ast$ induces an isomorphism $\UU(Y)_{1/n} \simeq \UU(X)_{1/n}^G$. Similarly to the proof of Theorem \ref{thm:main}, by applying the functor $\Psi$ to the latter isomorphism, we then conclude that $\U(Y)_{1/n} \simeq \U(X)_{1/n}^G$.
 
By construction of the category $\MMot(Y)$, the composition $\pi_\ast \pi^\ast$ is equal to the
class $\xi:=[\pi_\ast(\Oscr_X)]_{1/n} \in K_0(Y)_{1/n} \simeq\End(\UU(Y)_{1/n})$. Therefore, since $\pi_\ast(\Oscr_X)$ has rank $n$, it follows from\footnote{The proof of \cite[Cor.~2.4]{Azumaya} holds similarly for algebraic spaces: simply replace the Zariski topology by the Nisnevich topology and \cite[Prop.~3.3.1]{BO} by \cite[Tag 08GL, Lem.~62.8.3]{Stacks}.} \cite[Cor.~2.4]{Azumaya} that $\xi$ is invertible in $\End(\UU(Y)_{1/n})$. 

Consider the endomorphisms $e:=(1/\xi)(\pi^\ast\pi_\ast)$ (recall from \S\ref{rk:compatibility} that the category $\MMot(Y)_{1/n}$ is $K_0(Y)_{1/n}$-linear) and $e':=(1/n)(\sum_{g\in G} g_\ast)$ of the $G_0$-motive $\UU(X)_{1/n}$.
Both $e$ and $e'$ are idempotents. Moreover, since $\pi$ is $G$-equivariant, we have $e'e=ee'=e$. We claim that $e=e'$. Note that this claim implies that $\pi^\ast$ and $(1/\xi) \pi_\ast$
define inverse isomorphisms between $\UU(Y)_{1/n}$ and $\UU(X)_{1/n}^G$.

In order to prove the preceding claim, since $e'-e$ is also an idempotent, it suffices to show that $e-e'$ is nilpotent.
Let ${{\Gamma}}:=\amalg_{g\in G} \Gamma_g$, where $\Gamma_g\subset X\times_k
  X$ stands for the graph of $g$, and $\gamma\colon {{\Gamma}}\r X\times_Y X$ the map whose restriction to $\Gamma_g$ is given by the inclusion $\Gamma_g\subset X\times X$. Under these notations, the endomorphisms $e$ and $e'$ are represented in $\End(\U(X)_{1/n}) \simeq G_0(X\times_Y X)_{1/n}$ by the Grothendieck classes $(1/\xi)[\Oscr_{X\times_Y X}]$ and $(1/n)[\gamma_\ast(\Oscr_{{\Gamma}})]$, respectively. Since $G$ acts generically free, the map $\gamma$ is birational. Moreover, using the fact that $Y$ is the orbit space for $X$, we have 
$(X\times_Y X)_{\text{red}}=\cup_{g\in G} \Gamma_g\subset X\times
X$ (in this case $X\times_Y X$ is already reduced!). Since generically the class $\xi$ corresponds to multiplication by $n$, the difference $e-e'$ is represented by a class in $F^1 G_0(X\times_Y X)_{1/n}$, where $\{F^j G_0(X\times_Y X)_{1/n}\}_{j\geq 0}$ stands for the (decreasing) codimension filtration of $G_0(X\times_Y X)_{1/n}$. This implies that in order to prove that $e-e'$ is nilpotent, it suffices to show that the composition law of $\End(\UU(X)_{1/n})$ is compatible with the codimension filtration. Clearly, we have  $F^jG_0(X\times_Y X)_{1/n}=\sum_{g\in G} F^jG_0(\Gamma_g)_{1/n}$. Therefore, the following homomorphism 
\begin{equation}
\label{eq:norm}
\gamma_\ast: K_0({{\Gamma}})_{1/n}\too G_0(X\times_Y X)_{1/n}\simeq \End_{\MMot(Y)_{1/n}}(\UU(X)_{1/n})
\end{equation}
induces a surjection $F^jK_0({{\Gamma}})_{1/n}\twoheadrightarrow F^jG_0(X\times_Y X)_{1/n}$ for every $j \geq 0$. Note that we may view
${{\Gamma}}\xymatrix@1{\ar@<0.5ex>[r]^{\pr_1}\ar@<-0.5ex>[r]_{\pr_2}&} X$ as a
groupoid in the category of $k$-schemes. Under this viewpoint, the homomorphism \eqref{eq:norm} sends the convolution product on
$K_0({{\Gamma}})_{1/n}$ to the composition law of $\End(\UU(X)_{1/n})$. Therefore, the proof of Theorem \ref{thm:main2} follows now automatically from Lemma \ref{lem:convolution} below.
\begin{lemma}\label{lem:convolution} Let $X$ be a smooth $k$-scheme and
  ${{\Gamma}}\xymatrix@1{\ar@<0.5ex>[r]^{s}\ar@<-0.5ex>[r]_t&} X$ a groupoid
  in the~category of $k$-schemes with $s, t$ finite \'etale maps. Under these assumptions, the convolution product
  $\cup\colon K_0({{\Gamma}})\times K_0({{\Gamma}})\r K_0({{\Gamma}})$ preserves~the~codimension~filtration.
\end{lemma}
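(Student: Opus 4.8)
The plan is to unwind the convolution product, reduce it via the groupoid structure to a multiplicativity statement for the $K_0$-ring of a single smooth $k$-scheme, and then invoke the classical fact that the coniveau filtration on such a ring is multiplicative.

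First I would recall that, on the scheme $\Gamma\times_{s,X,t}\Gamma$ of composable arrows, the convolution is
\[
\cE\cup\cF \;=\; m_{\ast}\big(\pi_1^{\ast}(\cE)\otimes^{\bf L}\pi_2^{\ast}(\cF)\big)\,,
\]
where $\pi_1,\pi_2\colon\Gamma\times_{s,X,t}\Gamma\to\Gamma$ are the two projections and $m$ is the composition map. The key structural observation is that $\pi_1$, $\pi_2$ and $m$ are \emph{all} finite étale: $\pi_1$ (resp.\ $\pi_2$) is the base change of $t$ (resp.\ $s$) along $s$ (resp.\ $t$), whereas $m$ is identified with a projection --- hence again a base change of $s$ --- under the shear isomorphism $\Gamma\times_{s,X,t}\Gamma\isoto\Gamma\times_{s,X,s}\Gamma$, $(g,h)\mapsto(gh,h)$. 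Since $X$ is $k$-smooth and $\Gamma\to X$ is finite étale, both $\Gamma$ and $\Gamma\times_X\Gamma$ are smooth, in particular regular, $k$-schemes; hence $K_0=G_0$ for them and the codimension filtration $F^{\bullet}$ is the filtration by codimension of support. At this point I would also pass to connected components, reducing for notational convenience to the equidimensional case.

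The reduction is then routine. Let $\cE\in F^aK_0(\Gamma)$ and $\cF\in F^bK_0(\Gamma)$. Flat pullback preserves the codimension of supports, so $\pi_1^{\ast}(\cE)\in F^aK_0(\Gamma\times_X\Gamma)$ and $\pi_2^{\ast}(\cF)\in F^bK_0(\Gamma\times_X\Gamma)$. A finite morphism preserves the dimension of closed subsets, so $m_{\ast}$ preserves $F^{\bullet}$. Therefore the lemma is reduced to the single assertion that the product on $K_0$ of the smooth $k$-scheme $\Gamma\times_X\Gamma$ is filtered, i.e.\ $F^a\cdot F^b\subseteq F^{a+b}$.

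Finally, this is exactly the classical multiplicativity of the coniveau (codimension-of-support) filtration on the $K_0$-ring of a smooth variety over a field: under the isomorphism $\gr^n_{F}K_0(Z)\simeq\operatorname{CH}^n(Z)$ (valid for $Z$ regular of finite type over $k$) the $K_0$-product induces the intersection product on the Chow ring when $Z$ is $k$-smooth; see [SGA~6, Exp.~X], Soulé's work on operations in algebraic $K$-theory, or Fulton's intersection theory. I expect this last step to be the main obstacle to write out carefully, and it is genuinely non-formal: the support of $\cE\otimes^{\bf L}\cF$ may have codimension strictly smaller than $a+b$ (improper intersection), so the conclusion only holds after passing to $K_0$-classes, where the contributions of the higher $\mathrm{Tor}$-sheaves (entering with alternating signs) conspire to lie in $F^{a+b}$ --- this is precisely why one needs $X$, hence $\Gamma$, smooth over a field rather than merely regular. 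The remaining ingredients --- the identification of $\pi_1,\pi_2,m$ as finite étale maps, the flat/finite functoriality of $F^{\bullet}$, and the bookkeeping with connected components --- are routine.
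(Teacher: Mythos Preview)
Your proposal is correct. Both you and the paper factor the convolution through $\Gamma\times_{s,X,t}\Gamma$ and use that the multiplication map (being finite) preserves the filtration under pushforward; the only difference lies in how the class $\pi_1^\ast(\cE)\otimes\pi_2^\ast(\cF)$ is shown to land in $F^{a+b}$. You pull back along the finite \'etale projections $\pi_1,\pi_2$ and then invoke, as a black box, the multiplicativity $F^a\cdot F^b\subseteq F^{a+b}$ of the coniveau filtration on the smooth $k$-scheme $\Gamma\times_X\Gamma$. The paper instead factors through the absolute product: it writes $\pi_1^\ast(\cE)\otimes\pi_2^\ast(\cF)=i^\ast(\cE\boxtimes\cF)$ for the closed immersion $i\colon\Gamma\times_{s,X,t}\Gamma\hookrightarrow\Gamma\times_k\Gamma$, and then cites Gillet's results that the external product $-\boxtimes-$ and the pullback $i^\ast$ along a regular immersion between smooth schemes each preserve the codimension filtration. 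The two routes are essentially the same --- the black box you quote is itself proved by applying the paper's factorization to the diagonal --- so the paper's argument simply unpacks one more layer and pins down explicit references, while yours is marginally more conceptual and also makes the finite-\'etaleness of $m$ (via the shear isomorphism) explicit, which the paper does not use.
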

\begin{proof}
The convolution product $\cup$ may be written as the following composition
\[
K_0({{\Gamma}})\otimes K_0({{\Gamma}})\xrightarrow{-\boxtimes-} K_0({{\Gamma}}\times_k {{\Gamma}})\xrightarrow{i^\ast} K_0({{\Gamma}}\times_{s,X,t} {{\Gamma}})
\xrightarrow{\mu_\ast} K_0({{\Gamma}})\,,
\]
where $\mu$ stands for the multiplication map ${{\Gamma}}\times_{s,X,t} {{\Gamma}}\r {{\Gamma}}$ and $i$ for the inclusion ${{\Gamma}}\times_{s,X,t} {{\Gamma}}\r {{\Gamma}}\times_k \Gamma$. Thanks to \cite[Lem.~82]{Gillet}, resp. \cite[Thm.~83]{Gillet}, $-\boxtimes-$, resp. $i^\ast$, preserves the codimension filtration. Since $\mu_\ast$ is a finite map, it also preserves the codimension filtration. Hence, the proof is finished.
\end{proof}
\section{Proofs: equivariant Azumaya algebras}\label{sec:Proofs3}
In this section we prove Theorem \ref{thm:formula-main4} and Corollary \ref{cor:az1}. Recall from \S\ref{sec:intro} that $\cF$ is a flat quasi-coherent sheaf of algebras over $[X/G]$, that $\cF_\sigma$ stands for the pull-back of $\cF$ along the morphism $[X^\sigma/\sigma] \to [X/G]$, that $\cZ_\sigma$ stands for the center of the $\sigma$-graded sheaf of $\cO_{X^\sigma}$-algebras $\cF_\sigma \#\sigma$, and that $Y_\sigma := \uSpec(\cZ_\sigma)$. We start with a (geometric) result concerning sheaves of Azumaya algebras:
\begin{proposition}\label{prop:Azumaya} 
Assume that $\cF$ is a sheaf of Azumaya algebra over $[X/G]$. Under this assumption, the following holds:
\begin{itemize}
\item[(i)] The $\sigma$-graded sheaf of $\cO_{X^\sigma}$-algebra $\cZ_\sigma$ is {\em strongly graded} in the sense of \cite{NVO}. Concretely, $(\cZ_\sigma)_g$ is a line
bundle on $X^\sigma$ for every $g\in \sigma$ and the multiplication on $\cZ_\sigma$ induces an isomorphism $(\cZ_\sigma)_g\otimes_{X^\sigma} (\cZ_\sigma)_h\simeq (\cZ_{\sigma})_{gh}$.
\item[(ii)] The multiplication map induces an isomorphism $\cF_\sigma\otimes_{X^\sigma} \cZ_\sigma\simeq \cF_\sigma\# \sigma$ of $\sigma$-graded $\cO_{X^\sigma}$-algebras. Hence, $\cF_\sigma\#\sigma$ is a sheaf of Azumaya algebras~over~$\cZ_\sigma$.
\item[(iii)] The sheaf $\cZ_\sigma$ is equipped with a unique flat connection which is compatible with its algebra structure and which extends the tautological connection on $\Oscr_{X^\sigma}$.
This connection
is moreover compatible with the $\sigma$-grading.
\end{itemize}
\end{proposition}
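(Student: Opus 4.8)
The plan is to reduce all three assertions to a purely local statement on $X^\sigma$ in which the $\sigma$-action on $\cF_\sigma$ becomes \emph{inner}, and then to read everything off from an explicit description of the center of $\cF_\sigma\#\sigma$. This is legitimate because the conclusions are local for the Zariski topology on $X^\sigma$: ``line bundle'' and ``multiplication is an isomorphism'' in (i), ``Azumaya'' in (ii), and — once (i) is known — the uniqueness in (iii) (the connection in (iii) will be produced globally). So write $R:=\cO_{X^\sigma}$, $A:=\cF_\sigma$ (an Azumaya $R$-algebra), $\sigma=\langle t\rangle$ of order $m$, with $1/m\in R$ since $m\mid n$, and let $\phi:=\phi_t$ be the defining $R$-algebra automorphism, so $\phi^m=\id$. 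The decisive point — which I regard as the main obstacle — is this reduction to an inner action; it works precisely because $\sigma$ is \emph{cyclic}, so only the \emph{single} automorphism $\phi$ must be made inner. By the Skolem--Noether theorem, $\phi$ determines an invertible $R$-module $\cL_\phi=\{a\in A\mid \phi(x)a=ax\ \text{for all }x\in A\}$, it is inner exactly when $\cL_\phi$ is free, and any generator of $\cL_\phi$ is then a unit of $A$; shrinking $X^\sigma$ so that $\cL_\phi$ becomes free, I may assume $\phi=\operatorname{Ad}(u)$ for some $u\in A^\times$. Since $\operatorname{Ad}(u^m)=\phi^m=\id$, we get $u^m\in Z(A)^\times=R^\times$.

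Inside $A\#\sigma=\bigoplus_{k=0}^{m-1}A(\#t^k)$, set $v:=u^{-1}(\#t)$. A short computation with the smash-product relations shows that $v$ commutes with $A$ and with $\#t$, hence is central; that $v^{k}=u^{-k}(\#t^{k})$; and that $v^{m}=u^{-m}=:c\in R^\times$, so that $v$ is a unit of $A\#\sigma$. As $u\in A^\times$ we have $A(\#t^{k})=Av^{k}$, whence $A\#\sigma=\bigoplus_{k=0}^{m-1}Av^{k}$; since the $v^{k}$ are central and $Z(A)=R$, an element $\sum_k a_kv^{k}$ is central iff every $a_k\in R$. Therefore
\begin{equation*}
\cZ_\sigma=\bigoplus_{k=0}^{m-1}Rv^{k}\;\cong\;R[v]/(v^{m}-c),\qquad (\cZ_\sigma)_{t^{k}}=Rv^{k}.
\end{equation*}

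Parts (i) and (ii) now follow. Each $(\cZ_\sigma)_g$ is locally free of rank one, hence a line bundle, and multiplication carries $v^{i}\otimes v^{j}$ to $v^{i+j}$, equal to $c\,v^{i+j-m}$ with $c\in R^\times$ when $i+j\ge m$, so it induces an isomorphism $(\cZ_\sigma)_g\otimes_{X^\sigma}(\cZ_\sigma)_h\xrightarrow{\ \sim\ }(\cZ_\sigma)_{gh}$; i.e.\ $\cZ_\sigma$ is strongly $\sigma$-graded. This is (i). For (ii), the globally defined, degree-preserving algebra map $\cF_\sigma\otimes_{X^\sigma}\cZ_\sigma\to\cF_\sigma\#\sigma$ given by multiplication (an algebra map because $\cZ_\sigma$ is commutative and its image is central) is, locally and in degree $t^{k}$, the map $A\otimes_R Rv^{k}\to Av^{k}$; this is an isomorphism since right multiplication by the unit $v^{k}$ identifies $A$ with $Av^{k}$, so the map itself is an isomorphism of $\sigma$-graded $\cO_{X^\sigma}$-algebras. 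Since $\cF_\sigma\otimes_{X^\sigma}\cZ_\sigma$ is the base change of the Azumaya $\cO_{X^\sigma}$-algebra $\cF_\sigma$ along $\cO_{X^\sigma}\to\cZ_\sigma$, it is Azumaya over $\cZ_\sigma$, hence so is $\cF_\sigma\#\sigma$.

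For (iii): by (i), $\cZ_\sigma$ is locally free over $\cO_{X^\sigma}$ of rank $m=|\sigma|$, so $f\colon Y_\sigma\to X^\sigma$ is finite and flat, and locally $\cZ_\sigma\cong R[v]/(v^{m}-c)$ with $mv^{m-1}$ a unit (as $1/m\in R$ and $v\in(\cZ_\sigma)^\times$), so $\Omega^1_{Y_\sigma/X^\sigma}=0$ and $f$ is finite \'etale. Hence $\Omega^1_{Y_\sigma/k}=f^\ast\Omega^1_{X^\sigma/k}$, and applying the exact functor $f_\ast$ to the de Rham differential $d_{Y_\sigma}$, together with the projection formula, yields a morphism
\begin{equation*}
\nabla:=f_\ast(d_{Y_\sigma})\colon\ \cZ_\sigma\too\cZ_\sigma\otimes_{\cO_{X^\sigma}}\Omega^1_{X^\sigma/k},
\end{equation*}
which is a flat connection, compatible with the algebra structure of $\cZ_\sigma$, and restricting to the de Rham differential on $\cO_{X^\sigma}$ — all inherited from the corresponding properties of $d_{Y_\sigma}$. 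It is the only such connection: the difference of two would be an $\cO_{X^\sigma}$-linear derivation of $\cZ_\sigma$ valued in $\cZ_\sigma\otimes_{\cO_{X^\sigma}}\Omega^1_{X^\sigma/k}$, hence would factor through $\Omega^1_{\cZ_\sigma/\cO_{X^\sigma}}=\Omega^1_{Y_\sigma/X^\sigma}=0$. Finally, the ``graded part'' $\bigoplus_g\bigl(\mathrm{pr}_g\circ\nabla|_{(\cZ_\sigma)_g}\bigr)$ of $\nabla$ is again an algebra-compatible connection extending $d$ — here one uses that $\cZ_\sigma$ is strongly graded, so that the Leibniz rule is compatible with degrees — whence it equals $\nabla$ by uniqueness, and therefore $\nabla$ is compatible with the $\sigma$-grading.
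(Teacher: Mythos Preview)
Your proof is correct. The approach differs from the paper's chiefly in parts (i)--(ii): the paper argues globally via the Morita equivalence attached to an Azumaya algebra, namely the inverse equivalences $\cF_\sigma\otimes_{X^\sigma}-$ and $(-)^{\cF_\sigma}$ between $\coh(X^\sigma)$ and $\coh(X^\sigma;\cF_\sigma^\op\otimes\cF_\sigma)$. It sets $\cZ'_\sigma:=(\cF_\sigma\#\sigma)^{\cF_\sigma}$ and reads off immediately that $\cF_\sigma\otimes_{X^\sigma}\cZ'_\sigma\simeq\cF_\sigma\#\sigma$ and that each $(\cZ'_\sigma)_g=(\cF_\sigma g)^{\cF_\sigma}$ is a line bundle (being the image of the invertible bimodule $\cF_\sigma g$ under the equivalence); commutativity of $\cZ'_\sigma$ then uses cyclicity just as you do, whence $\cZ'_\sigma=\cZ_\sigma$. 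Your route instead localizes and invokes Skolem--Noether for Azumaya algebras to make the single generator act innerly, producing the explicit model $\cZ_\sigma\cong R[v]/(v^m-c)$ with $c\in R^\times$. The paper's argument is shorter and coordinate-free; yours is more elementary and yields a concrete local description which, as a bonus, makes the \'etaleness needed for (iii) immediate. For (iii) both proofs are essentially the same---uniqueness of the extension of derivations along an \'etale map---though your packaging via $f_\ast(d_{Y_\sigma})$ and the ``graded part equals $\nabla$ by uniqueness'' argument is a clean way to handle the compatibility with the $\sigma$-grading (the paper alternatively appeals to the $\sigma^\vee$-action after base change).
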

\begin{proof} 
Since $\cF_\sigma$ is a sheaf of Azumaya algebras over $X^\sigma$, the functors
\begin{eqnarray*} 
\coh(X^\sigma) \stackrel{\cF_\sigma\otimes_{X^\sigma}-}{\too} \coh(X^\sigma; \cF^\op_{\sigma}\otimes_{X^\sigma} \cF_\sigma) && \coh(X^\sigma; \cF^\op_{\sigma}\otimes_{X^\sigma} \cF_\sigma)  \stackrel{(-)^{\cF_\sigma}}{\too} \coh(X^\sigma)
%
%
\end{eqnarray*}
are inverse monoidal equivalences of categories. Let $\cZ_\sigma':=(\cF_\sigma\# \sigma)^{\cF_\sigma}$ be the centralizer of $\cF_\sigma$ in $\cF_\sigma\# \sigma$. Thanks to the preceding equivalences, the multiplication map $\cF_\sigma\otimes_{X^\sigma} \cZ'_\sigma\r \cF_\sigma\# \sigma$ is an isomorphism of $\cO_{X^\sigma}$-algebras. Moreover, the $\sigma$-grading on $\cF_\sigma\#\sigma$ induces a $\sigma$-grading on $\cZ'_\sigma$ with $(\cZ'_\sigma)_g=(\cF_\sigma g)^{\cF_\sigma}$. Since the $\cF_\sigma$-bimodule
$\cF_\sigma g$ is invertible and $\cF_\sigma g\otimes_{\cF_\sigma} \cF_\sigma h\simeq \cF_\sigma gh$, we then conclude that $\cZ'_\sigma$ is strongly graded. We now claim that $\cZ'_\sigma$ is commutative. Note that thanks to the isomorphism $\cF_\sigma\otimes_{X^\sigma} \cZ'_\sigma\simeq \cF_\sigma\# \sigma$, this claim implies that $\cZ'_\sigma=\cZ_\sigma$, and consequently proves items (i)-(ii). Let $g\in \operatorname{gen}(\sigma)$.  Since $\sigma$ is a cyclic group and $(\cZ'_{\sigma})_{g}$ is locally principal, the
sections of $(\cZ'_{\sigma})_{g}$ commute between themselves. Consequently, we have $(\cZ'_{\sigma})_{g^a}=((\cZ'_{\sigma})_g)^a$. This shows that $\cZ'_{\sigma}$ is commutative.

Let us now prove item (iii). Since $\cZ_\sigma$ is \'etale over $\cO_{X^\sigma}$ (this follows from the fact that $\cZ_\sigma$ is strongly graded), every local
derivation of $\Oscr_{X^\sigma}$ extends uniquely to a local derivation of $\cZ_\sigma$. This leads to 
the unique flat connection on $\cZ_\sigma$ extending the one on $\cO_{X^\sigma}$. Via a local computation, it can be checked that this connection respects the grading; alternatively, first base-change to a field which contains the $n^{\mathrm{th}}$ roots of unity and then use the fact that
the grading corresponds to a $\sigma^\vee$-action which is necessarily compatible with the unique connection.
\end{proof}
\subsection*{Proof of Theorem \ref{thm:formula-main4}}
By applying the functor $\Psi_\cF$ of Remark \ref{rk:sheaves} to the isomorphism $\overline{\theta}$ used in the proof of Theorem \ref{thm:main}, we obtain an induced isomorphism
\begin{equation}\label{eq:iso-sheaf}
\U([X/G];\cF)_{1/n} \simeq \bigoplus_{\sigma \in \vphi} \widetilde{\U}([X^\sigma/\sigma];\cF_\sigma)_{1/n}^{N(\sigma)}
\end{equation}
in $\NMot(k)_{1/n}$; we are implicitly using the obvious analogues for $\Psi_{\cF}$ of
Lemma \ref{lem:compat} and \S\ref{rk:pullback}. The proof follows now from the equivalence of categories \eqref{eq:equivalence1}.
\subsection*{Proof of Corollary \ref{cor:az1}(i)}
The structure map $Y_\sigma \to X^\sigma$ is finite, \'etale, and $\cZ_{\sigma}^{\sigma^\vee}= \cO_{X^\sigma}$. Hence, it is a $\sigma^\vee$-Galois cover. The fact that $\cL_g:=(\cZ_{\langle g\rangle})_g$ is a line bundle (equipped with a flat connection) on $X^\sigma$ follows from~Proposition~\ref{prop:Azumaya}. 
\subsection*{Proof of Corollary \ref{cor:az1}(ii)}
Note first that $\U([X^\sigma/\sigma];\cF_\sigma)$ is canonically isomorphic to $\U(X^\sigma; \cF_\sigma\# \sigma)$. Recall from \S\ref{sec:intro} that $\U(X^\sigma; \cF_\sigma\# \sigma)_{1/n}$ carries a $\bbZ[\sigma^\vee]_{1/n}$-action, where $\sigma^\vee$ stands for the dual cyclic group. Moreover, in the case where $k$ contains the $n^{\mathrm{th}}$ roots of unity, we have a character isomorphism $R(\sigma) \simeq \bbZ[\sigma^\vee]$. Therefore, in this case, we can consider the direct summand $e_\sigma \U(X^\sigma; \cF_\sigma\# \sigma)_{1/n}$ associated to the canonical idempotent $e_\sigma \in R(\sigma)_{1/n}$.
\begin{proposition} If $k$ contains the $n^{\mathrm{th}}$ roots of unity, then \eqref{eq:iso-sheaf} reduces to 
\begin{equation}
\label{eq:algebras31}
\U([X/G]; \cF)_{1/n}\simeq\bigoplus_{\sigma \in \vphi} (e_\sigma \U(X^\sigma; \cF_\sigma\#\sigma)_{1/n})^{N(\sigma)}\,.
\end{equation}
\end{proposition}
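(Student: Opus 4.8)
The plan is to deduce \eqref{eq:algebras31} from \eqref{eq:iso-sheaf} by identifying, for each cyclic subgroup $\sigma\in\vphi$, the summand $\widetilde{\U}([X^\sigma/\sigma];\cF_\sigma)_{1/n}^{N(\sigma)}$ with $(e_\sigma\U(X^\sigma;\cF_\sigma\#\sigma)_{1/n})^{N(\sigma)}$. Recall that $\widetilde{\U}([X^\sigma/\sigma];\cF_\sigma)_{1/n}$ is by construction the direct summand of $\U([X^\sigma/\sigma];\cF_\sigma)_{1/n}$ cut out by the canonical idempotent $e_\sigma\in R(\sigma)_{1/n}$ acting through the $R(\sigma)$-action of \S\ref{sec:action} (pulled back along $[X^\sigma/\sigma]\to B\sigma$ and adapted to the twisted setting as in Remark \ref{rk:sheaves}), i.e.\ by tensoring $\sigma$-equivariant $\cF_\sigma$-modules with $\sigma$-representations. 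Since $\sigma$ acts trivially on $X^\sigma$, a $\sigma$-equivariant $\cF_\sigma$-module on $X^\sigma$ is the same thing as a module over the sheaf of skew group algebras $\cF_\sigma\#\sigma$; this gives a Morita equivalence $\perf_\dg([X^\sigma/\sigma];\cF_\sigma)\simeq\perf_\dg(X^\sigma;\cF_\sigma\#\sigma)$ (the twisted counterpart of the equivalence recalled in the remark after Theorem \ref{thm:main}, combining Proposition \ref{prop:generator} with \cite[Lem.~4.26]{Gysin}), and hence the canonical isomorphism $\U([X^\sigma/\sigma];\cF_\sigma)\simeq\U(X^\sigma;\cF_\sigma\#\sigma)$ already mentioned before the statement. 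By functoriality this isomorphism is $N(\sigma)$-equivariant.

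The heart of the argument is then to check that, under this isomorphism and the character isomorphism $R(\sigma)_{1/n}\simeq\bbZ[\sigma^\vee]_{1/n}$, the $R(\sigma)_{1/n}$-action on $\U([X^\sigma/\sigma];\cF_\sigma)_{1/n}$ corresponds to the $\bbZ[\sigma^\vee]_{1/n}$-action on $\U(X^\sigma;\cF_\sigma\#\sigma)_{1/n}$ induced by the $\sigma$-grading of $\cF_\sigma\#\sigma$ (as recalled from \S\ref{sec:intro}). Concretely, the $R(\sigma)$-action sends a one-dimensional $\sigma$-representation $V_\chi$, $\chi\in\sigma^\vee$, to the endomorphism ``tensor a $\sigma$-equivariant $\cF_\sigma$-module by $V_\chi$'', whereas the $\bbZ[\sigma^\vee]$-action sends $\chi'\in\sigma^\vee$ to ``pull back a $\sigma$-graded $\cF_\sigma\#\sigma$-module along the automorphism rescaling the degree-$g$ component by $\chi'(g)$''. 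Unwinding the skew-group-algebra dictionary shows that these two operations coincide (under $V_\chi\leftrightarrow\chi$); moreover, since $N(\sigma)$ acts on both sides through its image in $\operatorname{Aut}(\sigma)$, hence on $\sigma^\vee$, this identification is $N(\sigma)$-equivariant, and in particular $e_\sigma$ acts on the two sides by the same $N(\sigma)$-invariant endomorphism.

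Granting this, cutting out the $e_\sigma$-summand yields an $N(\sigma)$-equivariant isomorphism $\widetilde{\U}([X^\sigma/\sigma];\cF_\sigma)_{1/n}\simeq e_\sigma\U(X^\sigma;\cF_\sigma\#\sigma)_{1/n}$; passing to $N(\sigma)$-invariants (legitimate since $1/|N(\sigma)|\in\bbZ[1/n]$, as $|N(\sigma)|$ divides $n$, and $\NMot(k)_{1/n}$ is idempotent complete) and summing over $\sigma\in\vphi$, then composing with \eqref{eq:iso-sheaf}, gives \eqref{eq:algebras31}. The main obstacle I anticipate is precisely the middle step: getting the two $\sigma^\vee$-actions to match requires careful bookkeeping of the identifications involved --- one-dimensional representations versus their characters, $\sigma$-gradings versus $\sigma^\vee$-actions, and the left/right module conventions in the skew-group-algebra equivalence --- together with the verification that this matching is natural in $\sigma$ so as to be compatible with the $N(\sigma)$-actions on both sides.
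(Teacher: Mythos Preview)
Your proposal is correct and follows essentially the same route as the paper: establish the Morita equivalence $\perf_\dg([X^\sigma/\sigma];\cF_\sigma)\simeq\perf_\dg(X^\sigma;\cF_\sigma\#\sigma)$, then verify (via the commutative square you describe) that under this equivalence the $R(\sigma)$-action by $V_\chi\otimes_k-$ matches the $\bbZ[\sigma^\vee]$-action by pullback along the grading automorphism $\tau_\chi$ sending $g\mapsto\chi(g)g$, and conclude by applying $e_\sigma$ and $N(\sigma)$-invariants to each summand of \eqref{eq:iso-sheaf}. The paper's proof is slightly terser---it records the commutative square \eqref{eq:square1} and leaves the $N(\sigma)$-equivariance implicit---but the argument is the same.
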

\begin{proof}
Given a character $\chi\in \sigma^\vee$, let us write $V_\chi$ for the associated
  $1$-dimensional $\sigma$-representation and $\tau_\chi$ for the
  automorphism of $\cF_\sigma\# \sigma$ corresponding to the
  $\sigma$-grading. Note that $\tau_\chi$ acts trivially on $\cF_\sigma$ and that $\tau_\chi(g)=\chi(g)g$ for every $g\in \sigma$. A simple verification shows that we have following commutative diagram:
\begin{equation}\label{eq:square1}
\xymatrix{
\perf_{\dg}([X^\sigma/\sigma];\cF_\sigma)\ar[d]_{\simeq}\ar[rr]^{V_{\chi}\otimes_k-} &&\perf_{\dg}([X^\sigma/\sigma];\cF_\sigma)\ar[d]^{\simeq}\\
\perf_{\dg}(X^\sigma; \cF_\sigma\# \sigma)\ar[rr]_{\tau_{\chi}^\ast} &&\perf_{\dg}(X^\sigma; \cF_\sigma\# \sigma)\,.
}
\end{equation}
Consequently, by applying the universal additive invariant to \eqref{eq:square1}, we conclude that 
%
%
the $R(\sigma)$-action on $\U([X^\sigma/\sigma]; \cF_\sigma)$ corresponds to the $\bbZ[\sigma^\vee]$-action on $\U(X^\sigma; \cF_\sigma\# \sigma)$ associated to the $\sigma$-grading. The proof follows now from isomorphism \eqref{eq:iso-sheaf}.
\end{proof}
Recall that $\cF$ is a sheaf of Azumaya algebras over $[X/G]$. Thanks to Proposition \ref{prop:Azumaya}(ii), $\cF\# \sigma$ becomes a sheaf of Azumaya algebras over $\cZ_\sigma$. Consequently, using \cite[Thm.~2.1]{Azumaya}, we conclude that 
\begin{equation}\label{eq:iso-Azumaya-1}
\U(X^\sigma; \cF_\sigma \# \sigma)_{1/r} \simeq \U(X^\sigma; \cZ_\sigma)_{1/r} = \U(Y_\sigma)_{1/r}\,,
\end{equation}
where $r$ stands for the product of the ranks of $\cF$ (at each one of the connected components of $X$). Note that the isomorphism \eqref{eq:iso-Azumaya-1} is preserved by the $\sigma^\vee$-action because it is induced by the (inverse) of the following Morita equivalence:
$$ -\otimes_{\cZ_\sigma} (\cF_\sigma \# \sigma) \colon \perf_\dg(X^\sigma; \cZ_\sigma) \too \perf_\dg(X^\sigma; \cF_\sigma \# \sigma)\,.$$
Therefore, the proof of Corollary \ref{cor:az1}(ii) follows now from the combination of the above isomorphisms \eqref{eq:algebras31} and \eqref{eq:iso-Azumaya-1} with the equivalence of categories \eqref{eq:equivalence1}.
\subsection*{Proof of Corollary \ref{cor:az1}(iii)}
Let $l$ be a field which contains the $n^{\mathrm{th}}$ roots of unity and $1/nr\in l$. Recall from \S\ref{sec:intro} that the noncommutative motive $\U(X^\sigma; \cF_\sigma \# \sigma)_l$ may be considered as a $\sigma$-graded object in $\NMot(k)_l$ as soon as we choose an isomorphism $\epsilon$ between the $n^{\mathrm{th}}$ roots of unity in $k$ and in $l$ (\eg\ the identity if $k=l$). In particular, $\U(X^\sigma; \cF_\sigma \# \sigma)_{l, g}$ stands for the degree $g$ part of $\U(X^\sigma; \cF_\sigma \# \sigma)_l$.
\begin{proposition}
If $k$ contains the $n^{\mathrm{th}}$ roots of unity and $l$ is a field which contains the $n^{\mathrm{th}}$ roots of unity and $1/n\in l$, then \eqref{eq:algebras31} reduces to an isomorphism  
\begin{equation}
\label{eq:algebras41}
\U([X/G];\cF)_l\simeq (\bigoplus_{g\in G} \U(X^g; \cF_g\#\langle g\rangle)_{l,g})^G\,.
\end{equation}
\end{proposition}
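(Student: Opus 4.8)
The plan is to run the argument in close parallel with the proof of Corollary~\ref{cor:main1}(ii), the role of Proposition~\ref{prop:iso} now being played by the grading identification already established in the proof of \eqref{eq:algebras31}. First I would apply the base-change functor $(-)_l$ of \S\ref{sub:change} to the isomorphism \eqref{eq:algebras31}. Since $(-)_l$ is additive it commutes with the finite direct sum over $\vphi$; and since $|N(\sigma)|$ divides $n$ and $1/n\in l$, the averaging idempotent $\tfrac{1}{|N(\sigma)|}\sum_{g\in N(\sigma)}g$ is preserved by $(-)_l$, so $(-)_l$ also commutes with the operation of taking $N(\sigma)$-invariants. Thus \eqref{eq:algebras31} becomes
\[
\U([X/G];\cF)_l \simeq \bigoplus_{\sigma\in\vphi}\bigl(e_\sigma\,\U(X^\sigma;\cF_\sigma\#\sigma)_l\bigr)^{N(\sigma)}\,.
\]

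The second step is to identify each summand $e_\sigma\,\U(X^\sigma;\cF_\sigma\#\sigma)_l$. Recall from the proof of \eqref{eq:algebras31} (diagram \eqref{eq:square1}) that, under the character isomorphism $R(\sigma)\simeq\bbZ[\sigma^\vee]$, the $R(\sigma)$-action on $\U(X^\sigma;\cF_\sigma\#\sigma)$ is exactly the $\bbZ[\sigma^\vee]$-action induced by the $\sigma$-grading of $\cF_\sigma\#\sigma$. Having fixed $\epsilon$, this $\sigma^\vee$-action translates back into a $\sigma$-grading $\U(X^\sigma;\cF_\sigma\#\sigma)_l=\bigoplus_{h\in\sigma}\U(X^\sigma;\cF_\sigma\#\sigma)_{l,h}$, and by Proposition~\ref{prop:concrete} the idempotent $e_\sigma\in R(\sigma)_l$ corresponds to the indicator function of $\mathrm{gen}(\sigma)\subseteq\sigma$. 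Using that $X^\sigma=X^h$ and $\cF_\sigma=\cF_h$ whenever $\langle h\rangle=\sigma$, this yields $e_\sigma\,\U(X^\sigma;\cF_\sigma\#\sigma)_l \simeq \bigoplus_{h\in\mathrm{gen}(\sigma)}\U(X^h;\cF_h\#\langle h\rangle)_{l,h}$.

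It then remains to reorganize the resulting double sum $\bigoplus_{\sigma\in\vphi}\bigl(\bigoplus_{h\in\mathrm{gen}(\sigma)}\U(X^h;\cF_h\#\langle h\rangle)_{l,h}\bigr)^{N(\sigma)}$, in the spirit of the passage to \eqref{eq:star-1}. Here I would use: (a) the partition $G=\coprod_{\sigma\in\varphi}\mathrm{gen}(\sigma)$; (b) that two elements of $\sigma$ are conjugate in $G$ iff they are conjugate in $N(\sigma)$ (if $ghg^{-1}=h'$ with $h,h'\in\sigma$ then $g\sigma g^{-1}=\sigma$); (c) the inclusion $C(h)\subseteq N(\langle h\rangle)$, whence the $N(\sigma)$-stabilizer of $h\in\mathrm{gen}(\sigma)$ equals $C(h)$; and (d) the elementary identity $(\bigoplus_{s\in S}M_s)^{\Gamma}\simeq\bigoplus_{[s]\in S/\Gamma}M_s^{\mathrm{Stab}_\Gamma(s)}$ for a $\Gamma$-equivariant family. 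Combining (a)--(d) produces $\bigoplus_{g\in G/\!\sim}\U(X^g;\cF_g\#\langle g\rangle)_{l,g}^{C(g)}\simeq\bigl(\bigoplus_{g\in G}\U(X^g;\cF_g\#\langle g\rangle)_{l,g}\bigr)^G$, which is \eqref{eq:algebras41}.

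I expect the delicate point --- and the main obstacle --- to be checking that all these identifications are $G$-equivariant in the precise sense required: that the $N(\sigma)$-action of \S\ref{rk:normalizer} on $\U(X^\sigma;\cF_\sigma\#\sigma)_l$, built from the geometric isomorphisms $\alpha_g$, carries the degree-$h$ graded piece onto the degree-$(ghg^{-1})$ piece, and that, as $h$ ranges over $\mathrm{gen}(\sigma)$ and $\sigma$ over a conjugacy class of cyclic subgroups, the pieces $\U(X^h;\cF_h\#\langle h\rangle)_{l,h}$ assemble into a genuine $G$-equivariant family indexed by the conjugation $G$-set $G$. This is a naturality assertion --- the skew group sheaf $\cF_\sigma\#\sigma$, its center $\cZ_\sigma$, the attendant $\sigma^\vee$-action and the $\epsilon$-translation are all functorial in the pair $(\sigma\subseteq G)$ --- but it must be spelled out, by rerunning with $\cF$ present throughout the bookkeeping of \S\ref{rk:normalizer}, Remark~\ref{rk:sheaves}, and the proof of Corollary~\ref{cor:az1}(ii).
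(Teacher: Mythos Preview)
Your proposal is correct and follows essentially the same route as the paper's proof: both arguments identify $e_\sigma$ with the indicator function of $\mathrm{gen}(\sigma)$ under the isomorphism $R(\sigma)_l\simeq\mathrm{Map}(\sigma,l)$ (Proposition~\ref{prop:concrete}), decompose $e_\sigma\,\U(X^\sigma;\cF_\sigma\#\sigma)_l$ as $\bigoplus_{h\in\mathrm{gen}(\sigma)}\U(X^h;\cF_h\#\langle h\rangle)_{l,h}$, and then reorganize via the partition $G=\coprod_{\sigma\in\varphi}\mathrm{gen}(\sigma)$. The paper is simply more terse---it writes the idempotent $e_g$ explicitly and asserts the final chain of isomorphisms without dwelling on the equivariance bookkeeping that you flag as the ``delicate point''; your more careful treatment of this is welcome, not a divergence. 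One small slip: your parenthetical justification of (b) (``if $ghg^{-1}=h'$ with $h,h'\in\sigma$ then $g\sigma g^{-1}=\sigma$'') is only valid when $h,h'$ are \emph{generators} of $\sigma$, but since you only apply (b) to elements of $\mathrm{gen}(\sigma)$ this does not affect the argument.
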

\begin{proof}
Recall first, from the proof of Proposition \ref{prop:concrete}, that we have the identification
\begin{eqnarray}\label{eq:identification-again}
 {{l}}[\sigma^\vee]\stackrel{\sim}{\too} {{l}}[\sigma]^\vee\simeq \mathrm{Map}(\sigma,{{l}}) && \chi\mapsto (g\mapsto (\epsilon \circ\chi)(g))\,.
\end{eqnarray}
Let $\sigma\in \varphi$ be a (fixed) cyclic subgroup. Note that for every $g\in \sigma$ we have
\[
\U(X^g; \cF_g\#\langle g\rangle\bigr)_{l,g}=e_g\U(X^g;\cF_g\#\langle g\rangle)_{l}\,,
\]
where $e_g \in l[\sigma^\vee]$ stands for the idempotent $(1/|\sigma|)(\sum_{\chi\in \sigma^\vee} (\epsilon\circ \chi)(g)^{-1}\chi)$. Under the above identification \eqref{eq:identification-again}, the idempotent $e_\sigma$ corresponds to the characteristic function of $\mathrm{gen}(\sigma)$ (since this is the unit element of $\Map(\mathrm{gen}(\sigma),l) \subset \Map(\sigma, l)$) and the idempotent $e_g$ to the characteristic function of $g$. Therefore, the equality $e_\sigma= \sum_{g\in\operatorname{gen}(\sigma)} e_g$ holds. Consequently, the right-hand side of \eqref{eq:algebras31} reduces to
$$
(\bigoplus_{\sigma \in \varphi} e_\sigma \U(X^\sigma; \cF_\sigma\#\sigma)_{l})^{G}  \simeq  (\bigoplus_{\sigma, g} e_g \U(X^\sigma; \cF_\sigma\#\sigma)_{l})^{G} \simeq  (\bigoplus_{g\in G}  \U(X^g; \cF_g\#\langle g\rangle)_{l,g})^{G}\,,
$$
where the middle direct sum runs over the pairs $\sigma \in \varphi$ and $g\in \mathrm{gen}(\sigma)$. 
\end{proof}
%
%
%
%
Recall that $\cF$ is a sheaf of Azumaya algebras over $[X/G]$. Similarly to the proof of Corollary \ref{cor:az1}(ii), the proof of Corollary \ref{cor:az1}(iii) follows from the combination of the above isomorphisms \eqref{eq:iso-Azumaya-1} and \eqref{eq:algebras41} with the equivalence \eqref{eq:equivalence1}.

\section{Grothendieck and Voevodsky's conjectures for orbifolds}\label{sec:Voevodsky}

Assume that $k$ is of characteristic $0$. Given a smooth projective
$k$-scheme $ X$ and a Weil cohomology theory $H^\ast$, let us denote
by $\pi_X^i\colon H^\ast( X) \to H^\ast(X)$ the $i^{\mathrm{th}}$
K\"unneth projector, by $Z^\ast(X)_\bbQ$ the $\bbQ$-vector space of
algebraic cycles on $X$ (up to rational equivalence), and by
$Z^\ast(X)_\bbQ/_{\!\sim \otimes \mathrm{nil}}$,
$Z^\ast(X)_\bbQ/_{\!\sim \mathrm{hom}}$, and $Z^\ast(X)_\bbQ/_{\!\sim
  \mathrm{num}}$, the quotients with respect to the smash-nilpotence,
homological, and numerical equivalence relations; consult
\cite[\S3.2]{Andre} for details. Following Grothendieck \cite{Grothendieck} (see
also Kleiman \cite{Kleim1, Kleim}), the standard
conjecture\footnote{The standard conjecture of type $C^+$ is also
  usually known as the {\em sign conjecture}. Note that if $\pi_X^+$ is algebraic, then the odd K\"unneth projector $\pi_X^- := \sum_i \pi_X^{2i+1}$ is also algebraic.} of type $C^+$, denoted
by $C^+(X)$, asserts that the even K\"unneth projector
$\pi^+_X:=\sum_i \pi^{2i}_X$ is algebraic, and the standard conjecture
of type $D$, denoted by $D(X)$, asserts that $Z^\ast(X)_\bbQ/_{\!\sim
  \mathrm{hom}}=Z^\ast(X)_\bbQ/_{\!\sim \mathrm{num}}$. Following
Voevodsky \cite{Voevodsky-IMRN}, the smash-nilpotence conjecture,
denoted by $V(X)$, asserts that $Z^\ast(X)_\bbQ/_{\!\sim
  \otimes\mathrm{nil}}=Z^\ast(X)_\bbQ/_{\!\sim \mathrm{num}}$. 
\begin{remark}[Status]
\begin{itemize}
\item[(i)] Thanks to the work of Grothendieck and Kleiman (see \cite{Grothendieck,Kleim1,Kleim}), the conjecture $C^+(X)$ holds when $X$ is of dimension $\leq 2$, and also for abelian varieties. Moreover, this conjecture is stable under products.
\item[(ii)] Thanks to the work of Lieberman \cite{Lieberman}, the conjecture $D(X)$ holds when $X$ is of dimension $\leq 4$, and also for abelian varieties.
\item[(iii)] Thanks to the work Voevodsky \cite{Voevodsky-IMRN} and Voisin \cite{Voisin}, the conjecture $V(X)$ holds when $X$ is of dimension $\leq 2$. Moreover, thanks to the work of Kahn-Sebastian \cite{KS}, the conjecture $V(X)$ also holds for abelian $3$-folds.
\end{itemize}
\end{remark} 
In \cite{Voev-conj,CD}, the aforementioned conjectures of Grothendieck and Voevodsky were proved in some new cases (\eg\ quadric fibrations, intersections of quadrics, intersections of bilinear divisors, linear sections of Grassmannians, linear sections of determinantal varieties, Moishezon varieties, etc) and extended from smooth
projective $k$-schemes $X$ to smooth proper algebraic stacks $\cX$. Using Theorem \ref{thm:main}, we are now able to  verify these conjectures in the case of ``low-dimensional'' orbifolds:
\begin{theorem}\label{thm:conjectures} Let
  $X$ be a smooth projective $k$-scheme equipped with a
  $G$-action.  \begin{itemize} \item[(i)] If $\mathrm{dim}(X)\leq 2$
    or $X$ is an abelian variety and $G$ acts by group homomorphisms,
    then the conjecture $C^+([X/G])$ holds.  \item[(ii)] If
    $\mathrm{dim}(X)\leq 4$, then the conjecture $D([X/G])$
    holds.  \item[(iii)] If $\mathrm{dim}(X)\leq 2$, then the
    conjecture $V([X/G])$ holds.  \end{itemize} \end{theorem}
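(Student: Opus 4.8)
The plan is to deduce the three conjectures from Theorem~\ref{thm:main}, by reducing the orbifold $[X/G]$ --- at the level of noncommutative motives with rational coefficients --- to its fixed loci $X^\sigma$, which are again ``low-dimensional'' (resp.\ built out of abelian subvarieties), and then invoking the known status of the \emph{commutative} conjectures recalled above, together with the compatibility of their noncommutative counterparts with finite separable base field extension, finite direct sums, and direct summands, as developed in \cite{Voev-conj,CD}.

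First I would apply Theorem~\ref{thm:main} to the additive invariant $E=\U(-)_\bbQ\colon\dgcat(k)\to\NMot(k)_\bbQ$ --- the universal additive invariant followed by the change-of-coefficients functor of \S\ref{sub:change} --- which is additive and $\bbZ[1/n]$-linear (here $1/n\in k$ since $\mathrm{char}(k)=0$). This gives $\U([X/G])_\bbQ\simeq\bigoplus_{\sigma\in\vphi}\widetilde{E}([X^\sigma/\sigma])^{N(\sigma)}$, and each right-hand summand is cut out of $\U([X^\sigma/\sigma])_\bbQ$ by the idempotents $e_\sigma$ and $\frac{1}{|N(\sigma)|}\sum_{h\in N(\sigma)}h$; hence $\U([X/G])_\bbQ$ is a \emph{direct summand} of $\bigoplus_\sigma\U([X^\sigma/\sigma])_\bbQ$. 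Next I would invoke the Morita equivalence $\perf_\dg([X^\sigma/\sigma])\simeq\perf_\dg(X^\sigma\times\Spec(k[\sigma]))$ recorded after Theorem~\ref{thm:main}: since $1/n\in k$ the $k$-algebra $k[\sigma]$ is finite étale, so $X^\sigma\times\Spec(k[\sigma])$ is a smooth projective $k$-scheme, namely a finite disjoint union of base changes $X^\sigma_{k_i}$ of $X^\sigma$ along finite separable extensions $k_i/k$. Finally, in characteristic zero the fixed locus $X^\sigma$ of the finite group $\sigma$ acting on the smooth projective $X$ is itself smooth and projective over $k$, of dimension $\le\dim X$; and when $X$ is an abelian variety on which $G$ acts by group homomorphisms, $X^\sigma$ is a smooth closed subgroup scheme of $X$, hence geometrically a finite disjoint union of translates of an abelian subvariety $B_\sigma$.

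Putting this together, $\perf_\dg([X/G])$ is, in $\NMot(k)_\bbQ$, a direct summand of a finite direct sum of copies of $\U(X^\sigma_{k_i})_\bbQ$ with the $X^\sigma_{k_i}$ smooth projective of dimension $\le\dim X$. For (i): when $\dim X\le2$ each $X^\sigma_{k_i}$ has dimension $\le2$ over $k_i$, so $C^+(X^\sigma_{k_i})$ holds by the classical results of Grothendieck--Kleiman; by the equivalence between $C^+$ and its noncommutative counterpart $C^+_{\mathrm{nc}}$ for smooth proper schemes and stacks, and by the stability of $C^+_{\mathrm{nc}}$ under restriction of scalars along $k_i/k$, under finite direct sums, and under passage to direct summands, it follows that $C^+_{\mathrm{nc}}$ holds for $\perf_\dg([X/G])$, \ie that $C^+([X/G])$ holds. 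In the abelian-variety case of (i) one argues identically, using instead that $C^+$ holds for abelian varieties and that the even Künneth projector, being Galois-stable, descends along finite separable covers (so that $C^+(X^\sigma)$ holds). Cases (ii) and (iii) are proved verbatim, replacing ``$\dim\le2$, $C^+$'' by ``$\dim\le4$, $D$'' (Lieberman) and by ``$\dim\le2$, $V$'' (Voevodsky--Voisin), respectively.

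The one genuinely non-formal input --- and the step I expect to require the most care --- is the behaviour of the noncommutative conjectures under the base change encoded by the factor $k[\sigma]$: because $k$ is not assumed to contain the $n$-th roots of unity, $k[\sigma]$ is a non-trivial product of finite separable extensions of $k$, so one must know that $C^+_{\mathrm{nc}}$, $D_{\mathrm{nc}}$ and $V_{\mathrm{nc}}$ are unchanged under finite separable base field extension and under restriction of scalars along such an extension. This is precisely the kind of compatibility established in \cite{Voev-conj,CD}, so no new work is needed here; the remaining ingredients (stability under $\oplus$ and under direct summands, and the equivalence with the commutative conjectures for smooth proper Deligne--Mumford stacks) are already part of that package. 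A minor additional point is the abelian-variety bookkeeping, where one uses that $X^\sigma$, being a smooth closed subgroup scheme of an abelian variety in characteristic $0$, is geometrically a disjoint union of translates of a single abelian subvariety, so that $C^+$ for it follows from $C^+$ for abelian varieties by additivity over connected components and Galois descent.
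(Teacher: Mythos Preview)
Your argument is correct and follows the same overall strategy as the paper: use \eqref{eq:formula-main-sub} to exhibit $\U([X/G])_\bbQ$ as a direct summand of $\bigoplus_\sigma \U(X^\sigma\times\Spec(k[\sigma]))_\bbQ$, then invoke the known commutative cases together with stability of $C^+_{\mathrm{nc}},D_{\mathrm{nc}},V_{\mathrm{nc}}$ under direct sums and summands.

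The one place where the paper is more economical is precisely the step you flag as ``the one genuinely non-formal input''. The paper never decomposes $\Spec(k[\sigma])$ into $\coprod_i\Spec(k_i)$ and never appeals to base-change compatibility of the noncommutative conjectures. Instead it simply treats $X^\sigma\times\Spec(k[\sigma])$ as a smooth projective $k$-scheme of dimension $\dim X^\sigma\le\dim X$. For (ii) and (iii) this immediately gives $\dim\le 4$ (resp.\ $\le 2$), so the commutative conjectures $D$ and $V$ over $k$ apply directly. For (i) the paper uses that the commutative conjecture $C^+$ is stable under \emph{products}: $C^+(X^\sigma\times\Spec(k[\sigma]))$ follows from $C^+(X^\sigma)$ and $C^+(\Spec(k[\sigma]))$, the latter being trivial since $\Spec(k[\sigma])$ is $0$-dimensional. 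So the base-change/restriction-of-scalars compatibilities you invoke, while true, are not needed; working over $k$ throughout buys a shorter argument with fewer external inputs. (Your treatment of the abelian-variety case, noting that $X^\sigma$ is only geometrically a union of translates of an abelian subvariety, is in fact a bit more careful than the paper's.)
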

\begin{proof}
Recall from \cite[\S4.1]{book} that the category of {\em noncommutative Chow
  motives $\NChow(k)_\bbQ$} is defined as the smallest full, additive,
idempotent complete, symmetric monoidal subcategory of $\NMot(k)_\bbQ$
containing the objects $\U(\cA)_\bbQ$, with $\cA$ a smooth proper dg
category. Let $N\!M \in \NChow(k)_\bbQ$ be a noncommutative Chow
motive. As explained in \cite{Voev-conj,CD} (see also \cite[\S3]{survey}), the conjectures $C^+$,
$D$, and $V$, admit noncommutative analogues
$C^+_{\mathrm{nc}}(N\!M)$, $D_{\mathrm{nc}}(N\!M)$, and
$V_{\mathrm{nc}}(N\!M)$, respectively. Moreover, given a smooth projective $k$-scheme $X$, we have the
equivalences:
\begin{eqnarray*}
C^+(X) \Leftrightarrow C^+_{\mathrm{nc}}(\U(X)_\bbQ) & D(X) \Leftrightarrow D_{\mathrm{nc}}(\U(X)_\bbQ) & V(X) \Leftrightarrow V_{\mathrm{nc}}(\U(X)_\bbQ)\,.
\end{eqnarray*} 
This motivated the extension of Grothendieck and Voevodsky's conjectures from smooth projective schemes $X$ to smooth proper algebraic stacks $\cX$ by setting 
\begin{equation*}
C^+(\cX):=C^+_{\mathrm{nc}}(\U(\cX)_\bbQ) \qquad D(\cX):=D_{\mathrm{nc}}(\U(\cX)_\bbQ) \qquad V(\cX):=V_{\mathrm{nc}}(\U(\cX)_\bbQ)\,.
\end{equation*}
Now, note that in the case where $\cX=[X/G]$, the formula \eqref{eq:formula-main-sub} implies that the noncommutative motive $\U([X/G])_\bbQ$ is a direct summand of $\bigoplus_{\sigma \in \varphi} \U(X^\sigma \times \mathrm{Spec}(k[\sigma]))$. Therefore, since by construction the conjectures $C^+_{\mathrm{nc}}$, $D_{\mathrm{nc}}$, and $V_{\mathrm{nc}}$, are stable under direct sums and direct summands, the above considerations show that in order to prove Theorem \ref{thm:conjectures} it suffices to prove the following conjectures:
\begin{eqnarray*}
C^+(X^\sigma \times \mathrm{Spec}(k[\sigma])) &  D(X^\sigma \times \mathrm{Spec}(k[\sigma]))&  V(X^\sigma \times  \mathrm{Spec}(k[\sigma]))\,.
\end{eqnarray*}
%
\begin{itemize}
\item[(i)] Using the fact that the conjecture $C^+$ is moreover stable under products, it is enough to prove the conjectures $C^+(X^\sigma)$ and $C^+(\mathrm{Spec}(k[\sigma]))$. On the one hand, since $\mathrm{Spec}(k[\sigma])$ is $0$-dimensional, the conjecture $C^+(\mathrm{Spec}(k[\sigma]))$ holds. On the other hand, since the assumptions imply that $\mathrm{dim}(X^\sigma)\leq 2$ or that $X^\sigma$ is an abelian variety, the conjecture $C^+(X^\sigma)$ also holds. 



\item[(ii)] The assumptions imply that $\mathrm{dim}(X^\sigma \times \Spec k[\sigma]) \leq 4$.
\item[(iii)] The assumptions imply that $\mathrm{dim}(X^\sigma \times \Spec k[\sigma]) \leq 2$.
\end{itemize}
The above considerations (i)-(iii) conclude the proof.
\end{proof}

\appendix 

\section{Nilpotency in the Grothendieck ring of an orbifold}\label{sec:appendix}
In what follows, we don't assume that $1/n \in k$.
Given a connected $k$-scheme $X$, it is natural to ask if the elements in the kernel of the rank map $K_0(X) \to \bbZ$ are nilpotent. This is well-known in the case where $X$ is Noetherian and admits an ample line bundle\footnote{Under these strong assumptions, the kernel of the rank map is itself nilpotent.}; consult \cite[Chapter V \S3]{Fulton} for example. The more general case where $X$ is quasi-compact and quasi-separated was proved in \cite[Thm.~2.3]{Azumaya}. In this appendix, we further extend the latter result to the case of (global) orbifolds. 

%
\begin{theorem} \label{thm:dm}
Let $X$ be quasi-compact quasi-separated scheme equipped with a $G$-action. Assume that $1/n\in \cO_X$ and that $\cX:=[X/G]$ is equipped with a finite morphism towards a quasi-compact quasi-separated algebraic space $Y$. Under these assumptions, the elements in the kernel of the induced map
\[
K_0(\cX)\too \prod_{\bar{y}\r Y} K_0(\cX_{\bar{y}})\,,
\]
where the product runs over the geometric points  of $Y$, are nilpotent.
\end{theorem}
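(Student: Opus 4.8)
The plan is to reduce to the known scheme-theoretic case, namely \cite[Thm.~2.3]{Azumaya}, by finding a suitable ``cover'' of the orbifold $\cX = [X/G]$ by an honest scheme on which nilpotency is already available. Concretely, I would first pass to $X$ itself: the morphism $p\colon X \to \cX$ is finite \'etale of degree $n$, and since $1/n \in \cO_X$ the composition $p^\ast p_\ast$ (resp.\ $p_\ast p^\ast$) is, up to the invertible factor $n$, a projector onto (a summand containing) the image of $K_0(\cX)$ in $K_0(X)$. More precisely, $p^\ast\colon K_0(\cX)_{1/n} \to K_0(X)_{1/n}^G$ is a split injection (its retraction being $\tfrac1n p_\ast$), exactly as in the argument in \S\ref{sec:Proofs3} and in the proof of Theorem \ref{thm:main2}. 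So an element $\alpha \in K_0(\cX)$ lying in the kernel of $K_0(\cX) \to \prod_{\bar y} K_0(\cX_{\bar y})$ has the property that $p^\ast\alpha$ lies in the kernel of $K_0(X)_{1/n} \to \prod_{\bar y} K_0(X_{\bar y})_{1/n}$, where now $X \to Y$ is the finite morphism obtained by composing $X \to \cX \to Y$; note $X$ is quasi-compact quasi-separated. If $p^\ast \alpha$ is nilpotent and $p^\ast$ is (rationally, after inverting $n$) injective, then $\alpha$ is nilpotent after inverting $n$; a separate elementary argument handles the prime-to-$n$ issue, or one simply observes that the kernel in question is already a $\bbZ[1/n]$-module-like object because $1/n \in \cO_X$ forces the relevant $K_0$-groups to behave well — in any case it suffices to prove nilpotency of $p^\ast\alpha$ in $K_0(X)$.

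The second step is therefore the scheme case: $X$ is quasi-compact quasi-separated, equipped with the finite morphism $q\colon X \to Y$ to a quasi-compact quasi-separated algebraic space, and we want the kernel of $K_0(X) \to \prod_{\bar y \to Y} K_0(X_{\bar y})$ to consist of nilpotents. This should follow from \cite[Thm.~2.3]{Azumaya} by a standard \emph{spreading-out / localization} argument: a class dies at every geometric point $\bar y$, hence (by continuity of $K_0$ and finite presentation, using that $X \to Y$ is finite) it dies on a Zariski (or Nisnevich) neighborhood of each point of $Y$; since $Y$ is quasi-compact one extracts a finite cover, and $K_0(X)$ is ``built'' from the $K_0$ of the preimages of these opens via a Mayer--Vietoris / Nisnevich-descent spectral sequence, so the class is nilpotent globally. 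The base case of a single local ring (or strictly Henselian local ring) on $Y$ is precisely where \cite[Thm.~2.3]{Azumaya} applies, after the finite base change. The footnote in the excerpt already signals that the proof of \cite[Cor.~2.4]{Azumaya} works for algebraic spaces if one replaces Zariski by Nisnevich topology and \cite[Prop.~3.3.1]{BO} by the Stacks Project reference; the same remark governs this argument.

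The \textbf{main obstacle} I anticipate is making the ``reduction to $X$'' genuinely rigorous in the presence of possible torsion prime to $n$, and more seriously, verifying that the formation of the product $\prod_{\bar y} K_0(\cX_{\bar y})$ is compatible with pulling back along $p\colon X\to \cX$ — i.e.\ that $\cX_{\bar y}$ pulls back to $X_{\bar y}$ (this is clear since $X \to Y$ factors through $\cX$) and that $p^\ast$ on fibers is injective enough to detect nilpotency. The cleanest route is to argue entirely after inverting $n$, prove nilpotency of $\alpha_{1/n}$, and then note that an element of $K_0(\cX)$ whose image in $K_0(\cX)_{1/n}$ is nilpotent and which dies at all geometric points is itself nilpotent — for this last implication one may invoke that $\cX$ is a global quotient with $1/n \in \cO_X$ so that $\mathbf{R}\mathrm{End}$ of a compact generator of $\cD_{\mathrm{Qch}}(\cX)$ has no $n$-torsion issues relevant here, mirroring Proposition \ref{prop:generator}. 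The bookkeeping of the Nisnevich descent spectral sequence for $K_0$ of an algebraic space $Y$, and checking that finite morphisms pull Nisnevich covers back to Nisnevich covers, is routine but is where the technical care is concentrated.
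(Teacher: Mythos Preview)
Your reduction step has a genuine gap: the pull-back $p^\ast\colon K_0([X/G])\to K_0(X)$ along $p\colon X\to [X/G]$ is \emph{not} injective, even after inverting $n$. Take $X=\Spec(k)$ with the trivial $G$-action. Then $\cX=BG$, $K_0(\cX)=R(G)$, $K_0(X)=\bbZ$, and $p^\ast$ is the dimension map $R(G)\to\bbZ$, whose kernel is the augmentation ideal; for $G=C_2$ this ideal is generated by $\chi-1$ with $(\chi-1)^2=-2(\chi-1)$, visibly not nilpotent in $R(G)_{1/2}$. The mistake is in the formula for $p_\ast p^\ast$: by the projection formula it is multiplication by $[p_\ast\cO_X]$, but $p_\ast\cO_X$ is the \emph{regular representation} $[k[G]]\in R(G)$ pulled back to $\cX$, not $n$ times the unit. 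In $R(G)_{1/n}$ the class $[k[G]]/n$ is an idempotent, not a unit. You have transplanted the argument of Theorem~\ref{thm:main2}, but there the morphism is $\pi\colon X\to X/\!\!/G$ to the \emph{geometric} quotient, not to the stack quotient; for $\pi$ one has $\pi_\ast\pi^\ast=[\pi_\ast\cO_X]\cdot(-)$ with $\pi_\ast\cO_X$ an honest rank-$n$ sheaf on a scheme, which is what makes the splitting work. No such splitting exists for $p\colon X\to[X/G]$, and passing to $X$ throws away exactly the equivariant information that the theorem is about.

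The paper's proof avoids this by never leaving the equivariant world. It runs the Nisnevich-descent induction of \cite[Thm.~2.3]{Azumaya} directly on the base $Y$ (this is your ``second step'', which is fine), reducing to the case where $Y=\Spec(R)$ for a local ring $(R,\mathfrak m)$. Over such a base the finite morphism $\cX\to Y$ means $X=\Spec(B)$ for a finite $R$-algebra $B$ with $G$-action, and $K_0(\cX)=K_0(B\#G)$ for the skew group algebra $A:=B\#G$. The local statement needed is then purely algebraic: for any field extension $l/\kappa$ of the residue field, the map $K_0(A)\to K_0(A_l)$ is injective. This is Lemma~\ref{lem:aux}, proved by Nakayama (to pass to $A_\kappa$), then reducing modulo the radical to a semi-simple algebra, and finally to a division algebra where the rank map does the job. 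The point is that the equivariance is packaged into the finite algebra $A$, rather than discarded.
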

\begin{proof} 
The inductive argument used in the proof of \cite[Thm.~2.3]{Azumaya} for quasi-compact quasi-separated schemes holds similarly for quasi-compact quasi-separated algebraic spaces; simply replace the Zariski topology by the Nisnevich topology\footnote{It is well-known that (nonconnective) algebraic $K$-theory satisfies not only Zariski-descent but also Nisnevich-descent.} and \cite[Prop.~3.3.1]{BO} by \cite[Tag 08GL, Lem.~62.8.3]{Stacks}. Therefore, by applying this inductive argument to the algebraic space $Y$, it suffices to prove the following local
statement: {\em let $(R,\mathfrak{m})$ be a local ring with residue field $\kappa:=R/\mathfrak{m}$, $Y:=\mathrm{Spec}(R)$, $y:=\mathrm{Spec}(\kappa)$ the closed point of $Y$, and $A:=\Oscr_{X,y} \# G$ the skew group algebra. Under these notations, for every field extension $l/\kappa$, the induced homomorphism $K_0(A) \to K_0(A_l)$ is injective}. This local statement is a particular case of Lemma \ref{lem:aux} below. 
%
\end{proof}
\begin{lemma}\label{lem:aux} Let $(R,\mathfrak{m})$ be a local ring with residue field $\kappa:=R/\mathfrak{m}$, $A$ an $R$-algebra (which we assume finitely generated as an $R$-module), and $l/\kappa$ a field extension. Under these assumptions, the induced homomorphism $K_0(A) \to K_0(A_l)$ is injective. 
\end{lemma}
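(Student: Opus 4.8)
\subsection*{Proof proposal}

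The plan is to peel the homomorphism apart into two injective maps and to reduce, at the bottom, to an elementary fact about $K_0$ of finite-dimensional algebras. First, since $l$ is an extension of $\kappa=R/\mathfrak m$, one has $A_l=A\otimes_R l\cong \bar A\otimes_\kappa l$ with $\bar A:=A/\mathfrak m A$, so the homomorphism in question factors as $K_0(A)\to K_0(\bar A)\to K_0(\bar A\otimes_\kappa l)$; it therefore suffices to prove that each of these two maps is injective.

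For the first map I would show $\mathfrak m A\subseteq\rad(A)$ and then invoke a general injectivity statement. To see $\mathfrak m A\subseteq\rad(A)$: for $a\in\mathfrak m A$, left multiplication by $1-a$ on $A$ is surjective modulo $\mathfrak m$, and $A$ is finite over the local ring $R$, so $A/(1-a)A$ satisfies $\mathfrak m\cdot (A/(1-a)A)=A/(1-a)A$ and Nakayama over $R$ forces $A=(1-a)A$; running the same argument on a left inverse of $1-a$ yields a two-sided inverse, so $1-a\in A^\times$, i.e. $\mathfrak m A\subseteq\rad(A)$. Next, for any two-sided ideal $I\subseteq\rad(A)$ the map $K_0(A)\to K_0(A/I)$ is injective: given finitely generated projective $A$-modules $P,Q$ with $P/IP\cong Q/IQ$, lift such an isomorphism to $f\colon P\to Q$ (possible since $P$ is projective), note that $f$ is surjective by Nakayama applied to the finitely generated module $Q/\operatorname{im}f$, split it using projectivity of $Q$, and observe that $\ker f$ is finitely generated with $\ker f/I\ker f=0$, hence $\ker f=0$ by Nakayama; applying this with $(P,Q)$ replaced by $(P\oplus A^{n},Q\oplus A^{n})$ upgrades "stably isomorphic mod $I$" to "equal in $K_0(A)$". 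This handles $K_0(A)\to K_0(\bar A)$.

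For the second map, $\bar A$ is finite dimensional over $\kappa$, so its Jacobson radical is nilpotent; idempotents lift modulo a nilpotent ideal, whence $K_0(\bar A)\cong K_0(B)$ for $B:=\bar A/\rad(\bar A)$ semisimple, and likewise $K_0(\bar A\otimes_\kappa l)\cong K_0(B\otimes_\kappa l)$ since $\rad(\bar A)\cdot(\bar A\otimes_\kappa l)$ is a nilpotent two-sided ideal of $\bar A\otimes_\kappa l$ with quotient $B\otimes_\kappa l$ (and these isomorphisms are compatible with base change). Writing $B=\prod_i B_i$ with $B_i$ simple, the map becomes the product of the maps $\bbZ\cong K_0(B_i)\to K_0(B_i\otimes_\kappa l)$, which send the class of the simple $B_i$-module $S_i$ to $[S_i\otimes_\kappa l]$. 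Base change preserves finite generation and projectivity, so $S_i\otimes_\kappa l$ is a nonzero finitely generated projective module over the Artinian ring $B_i\otimes_\kappa l$; by Krull--Schmidt its class is a nonnegative integral combination of classes of indecomposable projectives with at least one positive coefficient, hence is non-torsion, and the map is injective. The main obstacle is precisely the middle step: $\mathfrak m A$ need not be nilpotent and $A$ need not be $\mathfrak m$-adically complete, so one cannot simply lift idempotents and must instead argue injectivity of $K_0(A)\to K_0(A/\mathfrak m A)$ via the Nakayama-based lifting of isomorphisms above; a secondary point to watch is that base change along $\kappa\to l$ preserves projectivity and finite generation at every stage.
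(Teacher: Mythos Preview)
Your proof is correct and follows essentially the same route as the paper's: factor $K_0(A)\to K_0(A_\kappa)\to K_0(A_l)$, show $\mathfrak m A\subseteq\rad(A)$ so that Nakayama gives injectivity of the first map, and then reduce the second to the semisimple case. The only differences are cosmetic: the paper verifies $\mathfrak m A\subseteq\rad(A)$ by looking at simple $A$-modules rather than units, and at the end it reduces via Morita to a division $\kappa$-algebra (where injectivity is the rank map) instead of invoking Krull--Schmidt over the Artinian ring $B_i\otimes_\kappa l$.
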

\begin{proof}
Note first that every simple $A$-module $V$ is finitely generated as an $R$-module.
If $r\in \mathfrak{m}$, then $rV$ is an $A$-submodule of $V$. Therefore, by Nakayama's lemma applied to $R$, we have $rV=0$. This implies that $\mathfrak{m}A\subset \rad(A)$, where $\rad(A)$ stands for the Jacobson's radical of $A$. Using  Nakayama's lemma once again, we conclude that the induced homomorphism $K_0(A)\to K_0(A_\kappa)$ is injective. Thanks to the above considerations, it suffices then to prove the particular case where $R=\kappa$ is a field and $A$ a finite dimensional $\kappa$-algebra. By replacing $A$ with $A/\!\rad(A)$, we can moreover assume that $A$ is semi-simple, and, using Morita theory, we can furthermore assume that $A$ is a division $\kappa$-algebra. In this latter case, we have an isomorphism $K_0(A)\simeq \bbZ$ induced by the rank map. The proof follows now from the fact that the rank map is preserved under base-change $-\otimes_\kappa l$. 
%
%
\end{proof}
\begin{corollary} \label{cor:nilp}
Let $X$ be a quasi-compact separated $k$-scheme equipped with a $G$-action. Assume that $1/n\in \cO_X$. For every finite field extension $l/k$, the elements in the kernel of the induced homomorphism $K_0([X/G])\r K_0([X_l/G])$ are nilpotent.
\end{corollary}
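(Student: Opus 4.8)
The plan is to deduce the statement from Theorem~\ref{thm:dm} by taking for the algebraic space $Y$ the geometric quotient of $\cX:=[X/G]$. First I would note that, since $X$ is quasi-compact and separated and $G$ is finite, $\cX$ is a separated Deligne--Mumford stack with finite inertia; hence, by Keel--Mori \cite{KM1}, the coarse moduli space $Y:=X/\!\!/G$ exists as a quasi-compact separated (in particular quasi-separated) algebraic space over $k$, and the canonical morphism $\pi\colon \cX\to Y$ is finite. As $1/n\in\cO_X$ by hypothesis, Theorem~\ref{thm:dm} then applies with this choice of $Y$ and shows that the kernel of
\[
K_0(\cX)\too \prod_{\bar y\r Y}K_0(\cX_{\bar y})\,,
\]
the product ranging over the geometric points of $Y$, consists of nilpotent elements.

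It therefore remains to establish the (purely formal) inclusion $\ker\bigl(K_0(\cX)\to K_0([X_l/G])\bigr)\subseteq \ker\bigl(K_0(\cX)\to\prod_{\bar y}K_0(\cX_{\bar y})\bigr)$. For this I would fix a geometric point $\bar y\colon\Spec(\Omega)\to Y$, with $\Omega$ algebraically closed, and observe that $\Omega$ is an extension field of $k$ via the structure morphism $Y\to\Spec(k)$. Since $l/k$ is finite, hence algebraic, there is a $k$-algebra embedding $l\hookrightarrow\Omega$; writing $Y_l:=Y\times_k l$, the point $\bar y$ then factors through $Y_l\to Y$. Using the canonical identification $\cX\times_Y Y_l\simeq[X_l/G]$ (quotient stacks commute with base change), one obtains $\cX_{\bar y}\simeq[X_l/G]\times_{Y_l}\Spec(\Omega)$, so that the morphism $\cX_{\bar y}\to\cX$ factors as $\cX_{\bar y}\to[X_l/G]\to\cX$. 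By functoriality of pullback on $K_0$ (legitimate since $[X_l/G]\to\cX$ is flat, being a base change along $l/k$), the map $K_0(\cX)\to K_0(\cX_{\bar y})$ therefore factors through $K_0([X_l/G])$. Combined with the previous paragraph, this shows that every element of $\ker\bigl(K_0([X/G])\to K_0([X_l/G])\bigr)$ is nilpotent.

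All the genuine content is in Theorem~\ref{thm:dm}; the deduction itself is bookkeeping with fiber products of stacks. The one point I would be careful about is the very first step: one must invoke Keel--Mori in precisely the generality available for $\cX$ with finite inertia (so as to avoid inadvertently assuming $X$ of finite type over $k$, or Noetherian), and check that the resulting $Y$ is indeed quasi-compact, separated, and that $\pi$ is finite --- all of which hold, $\pi$ being proper and quasi-finite.
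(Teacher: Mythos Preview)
Your proof is correct and follows essentially the same route as the paper's: take $Y$ to be the coarse moduli space via Keel--Mori, apply Theorem~\ref{thm:dm}, and use that every geometric point of $Y$ lifts to $Y_l$ together with the identification $[X_l/G]\simeq [X/G]\times_Y Y_l$. Your version is more explicit about the fiber-product bookkeeping and rightly flags the generality needed for Keel--Mori, but the argument is the same.
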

\begin{proof} Let $\pi:[X/G]\r Y$ be the coarse moduli space of
  $[X/G]$; see \cite{KM1}. Note that the morphism $\pi$ is finite and that $[X_l/G]=[X/G]\times_{Y} Y_l$. Therefore, the proof follows from the combination of Theorem \ref{thm:dm} with the fact that every geometric point of $Y$ factors through a geometric point of $Y_l$.
\end{proof}
\begin{remark} The homomorphism $K_0([X/G])\r K_0([X_l/G])$ is not necessarily injective. However, if $r$ stands for the degree of the (finite) field extension $l/k$, then the induced homomorphism $K_0([X/G])_{1/r}\r K_0([X_l/G])_{1/r}$ is injective.
\end{remark}

\end{document}

\end{proof}